\numberwithin{equation}{section}
\newtheorem{theorem}{Theorem}[section]
\newtheorem{proposition}[theorem]{Proposition}
\newtheorem{lemma}[theorem]{Lemma}
\newtheorem{corollary}[theorem]{Corollary}
\theoremstyle{definition}
\newtheorem{definition}[theorem]{Definition}
\theoremstyle{remark}
\newtheorem{remark}[theorem]{Remark}
\title[Preliminaries on polynomial Hilbert structures and Laplacians]
{Preliminaries on Pre-Hilbert Structures on Polynomial Spaces and Associated Laplacians}
\author{Jean-Pierre Magnot}
\date{}
\address{{LAREMA, Universit\'e d’Angers, 2 Bd Lavoisier, 
49045 Angers cedex 1, France;  Lyc\'ee Jeanne d'Arc, 40 avenue de Grande Bretagne, 63000 Clermont-Ferrand, 
France}; Lepage Research Institute, 17 novembra 1, 081 16 Presov, Slovakia}
\email{\small magnot@math.cnrs.fr; jean-pierr.magnot@ac-clermont.fr}
\date{}
\subjclass[2020]{42C05, 47A55, 46E35, 47B36}
\keywords{
Orthogonal polynomials,
Sobolev inner products,
Gram--Schmidt stability,
Laplacian operators,
resolvent convergence,
polynomial Hilbert geometries
}
\begin{document}
\maketitle
\begin{abstract}
We study orthogonal polynomial systems arising from general pre-Hilbert inner products
on polynomial spaces, beyond the classical framework of measures.
To each such inner product we associate a canonical Laplacian defined from an abstract
derivation, and we investigate the operator-theoretic structures induced by this construction.

Our main contribution is the introduction of a resolvent-based distance between polynomial
Hilbert geometries, and the proof of quantitative stability results for finite-degree
orthogonalization procedures.
In particular, we show that norm-resolvent closeness of the associated Laplacians implies
stability of Gram--Schmidt orthogonal bases, orthogonal projectors and reproducing kernels
on all finite-dimensional polynomial subspaces.

The general theory is illustrated by several explicit examples.
We analyze in detail the case of orthogonal polynomials on the unit circle, comparing
classical $L^2$ geometries associated with finite Radon measures and Sobolev-type
regularizations via Fourier methods.
We also revisit the thin annulus problem, showing that its asymptotic regime admits
a natural interpretation as a resolvent limit of polynomial geometries.

These results provide a unified operator-theoretic framework for the study of stability,
degenerations and geometric limits of orthogonal polynomial systems.
\end{abstract}

\section{Introduction}

Orthogonal polynomials play a central role in analysis, approximation theory and
spectral theory.
Classically, they are defined through $L^2$ inner products associated with measures,
leading to well-known families such as Jacobi, Laguerre, Hermite or orthogonal polynomials
on the unit circle see, e.g., \cite{Szego,Chihara,Ismail,KoekoekLeskySwarttouw,SimonOPUC1,SimonOPUC2}.
In this setting, the algebraic, analytic and spectral properties of the polynomial systems
are tightly linked to the underlying measure.

In many situations, however, orthogonal polynomials arise from inner products that are
not purely measure-based.
Sobolev and fractional Sobolev inner products, involving derivatives or nonlocal energies,
provide natural examples motivated by approximation theory, partial differential equations
and numerical analysis.
More generally, one may consider arbitrary pre-Hilbert inner products on polynomial spaces,
possibly defined by quadratic forms rather than measures.
In such cases, classical tools based on moment problems or three-term recurrence
relations are no longer sufficient to describe the underlying structure,
as already observed for Sobolev and nonlocal inner products
\cite{MarcellanSobolev,IserlesSobolev}..

The purpose of this paper is to study orthogonal polynomial systems from an operator-theoretic
and geometric perspective.
Given a polynomial space equipped with a pre-Hilbert inner product and a derivation $D,$
we associate a canonical Laplacian operator defined as $\Delta=D^*D$.
This operator directly depends on the interaction between the algebraic structure of polynomials and on
 the chosen inner product.
Our approach emphasizes the role of the Laplacian as the fundamental object governing
orthogonalization procedures.
A central question addressed here is the stability of orthogonal polynomial systems under
perturbations of the underlying inner product. Our key tool remains on the Laplacian $\Delta$. 
Indeed, rather than comparing measures or coefficients directly, we introduce a distance between
polynomial Hilbert geometries based on the norm-resolvent difference of the associated
Laplacians.
Our distance is based on the norm of the resolvent difference
$\|(1+\Delta_1)^{-1}-(1+\Delta_2)^{-1}\|$.
Norm-resolvent convergence is classical in perturbation theory and spectral analysis,
see, e.g., \cite{Kato,ReedSimon1,Davies}.
Related comparison topologies for (typically unbounded) operators include the gap
and graph metrics \cite{Kato,LeschGap}, as well as variational notions such as Mosco
convergence for closed forms \cite{Mosco,KuwaeShioya}.
What is specific to the present work is the use of a resolvent-based metric in the
context of polynomial Hilbert geometries, in order to obtain quantitative stability of
finite-degree Gram--Schmidt orthogonalization, projectors and reproducing kernels.
Our main results show that resolvent closeness of Laplacians implies quantitative stability
of Gram--Schmidt orthogonalization on all finite-dimensional polynomial subspaces.
In particular, orthogonal projectors, orthonormal bases and reproducing kernels depend
on the geometry in the resolvent topology. These results hold in a fully abstract framework and do not rely on the existence of an
underlying measure. We use the norm of the resolvent difference as a metric on polynomial Hilbert geometries.

The abstract theory is complemented by explicit examples.
We first analyze orthogonal polynomials on the unit circle, comparing classical $L^2$
geometries associated with finite Radon measures and Sobolev-type regularizations.
Using Fourier methods, we derive detailed resolvent estimates and obtain quantitative
stability results for orthogonal polynomials on the unit circle at fixed degree.
We then revisit the thin annulus problem, showing that the collapse of a two-dimensional
Sobolev geometry onto a one-dimensional limit admits a natural interpretation as a
resolvent convergence of polynomial Laplacians.

Altogether, the results of this paper provide a unified framework for stability,
degenerations and geometric limits of orthogonal polynomial systems, extending classical
measure-based theory \cite{Szego,Chihara,Ismail,SimonOPUC1,SimonOPUC2}
to broader polynomial inner products such as Sobolev-type geometries
\cite{IserlesSobolev,MarcellanSobolev},
in a spirit reminiscent of operator and form convergence in spectral theory
\cite{Kato,Mosco,KuwaeShioya}.

\section{Preliminaries}

\subsection{Polynomial spaces and pre-Hilbert structures}

Let $\Omega\subset\mathbb R^d$ be a domain (i.e.\ an arbitrary subset, not necessarily open).
We denote by
\[
\mathcal P(\Omega):=\{\,p|_\Omega:\ p\in\mathbb C[x_1,\dots,x_d]\,\}
\]
the space of \emph{polynomial functions on $\Omega$}.
Let
\[
\langle\cdot,\cdot\rangle:\mathcal P(\Omega)\times\mathcal P(\Omega)\to\mathbb C
\]
be a positive semidefinite sesquilinear form (conjugate-linear in the first variable).
Set
\[
N:=\{p\in\mathcal P(\Omega):\langle p,p\rangle=0\},\qquad
\mathcal H_0:=\mathcal P(\Omega)/N.
\]
Then $\langle\cdot,\cdot\rangle$ descends to a genuine Hermitian inner product on $\mathcal H_0$,
and we denote by $H$ the Hilbert completion of $(\mathcal H_0,\langle\cdot,\cdot\rangle)$.
We will systematically identify $\mathcal H_0$ with its canonical dense image in $H$.

Typical examples include:
\begin{itemize}[leftmargin=2em]
  \item $L^2$ inner products induced by finite Borel measures $\mu$ on $\Omega$,
  \[
  \langle p,q\rangle_{L^2(\mu)}=\int_\Omega p(x)\,\overline{q(x)}\,d\mu(x),
  \]
  as in the classical theory of orthogonal polynomials \cite{Szego,Chihara,SimonOPUC1,SimonOPUC2};
  \item Sobolev or fractional Sobolev inner products involving derivatives or nonlocal energies,
  see e.g.\ \cite{MarcellanXu,MarcellanXuSurvey};
  \item discrete inner products arising from finite point sets and interpolation problems,
  \cite{deBoorRon,XuInterpolation}.
\end{itemize}

\subsection{Gram--Schmidt orthogonalization and polynomial projectors}

Fix an ordered family $(E_n)_{n\ge 0}$ in $\mathcal P(\Omega)$ compatible with the degree filtration,
for instance an enumeration of monomials with nondecreasing total degree.
Assume that for each $N$ the family $(E_0,\dots,E_N)$ is linearly independent in $\mathcal H_0$.
Define inductively the orthogonalized vectors $(\widetilde p_n)_{n\ge 0}$ by
\begin{equation}\label{eq:GS-raw}
\widetilde p_0:=E_0,\qquad
\widetilde p_n:=E_n-\sum_{k=0}^{n-1}\frac{\langle E_n,\widetilde p_k\rangle}{\langle \widetilde p_k,\widetilde p_k\rangle}\,\widetilde p_k,
\quad n\ge 1,
\end{equation}
and the orthonormal polynomials $(p_n)_{n\ge 0}$ by
\begin{equation}\label{eq:GS-orthonormal}
p_n:=\frac{\widetilde p_n}{\|\widetilde p_n\|},\qquad \|\widetilde p_n\|^2=\langle \widetilde p_n,\widetilde p_n\rangle.
\end{equation}
For each $N$, the subspace
\[
H_{\le N}:=\mathrm{span}\{p_0,\dots,p_N\}\subset H
\]
coincides with the image in $H$ of the polynomials of degree $\le N$ (modulo $N$), and is finite-dimensional.

\medskip
\noindent\textbf{Gram matrices.}
Let $G_N\in\mathbb C^{(N+1)\times(N+1)}$ be the Gram matrix of $(E_0,\dots,E_N)$,
\begin{equation}\label{eq:Gram-matrix}
(G_N)_{ij}:=\langle E_j,E_i\rangle,\qquad 0\le i,j\le N.
\end{equation}
Then $G_N$ is Hermitian positive definite under the independence assumption above.
Moreover, the coefficients in \eqref{eq:GS-raw} can be written in matrix form:
writing $E=(E_0,\dots,E_N)^\top$ and $\widetilde p=(\widetilde p_0,\dots,\widetilde p_N)^\top$,
there exists a unique lower triangular matrix $L_N$ with positive diagonal such that
\begin{equation}\label{eq:Cholesky-GS}
G_N=L_NL_N^*,\qquad \widetilde p=L_N^{-1}E,
\end{equation}
and the normalized basis is $p=D_N^{-1}\widetilde p$ where $D_N=\mathrm{diag}(\|\widetilde p_0\|,\dots,\|\widetilde p_N\|)$.

\medskip
\noindent\textbf{Orthogonal projector.}
Let $P_N:H\to H_{\le N}$ denote the orthogonal projector onto $H_{\le N}$.
In terms of the orthonormal basis $(p_0,\dots,p_N)$, one has for $f\in H$,
\begin{equation}\label{eq:projector-formula}
P_N f=\sum_{k=0}^N \langle f,p_k\rangle\,p_k.
\end{equation}
Equivalently, if we expand $P_N f=\sum_{j=0}^N c_j E_j$ in the (non-orthonormal) basis $(E_0,\dots,E_N)$,
then the coefficient vector $c=(c_0,\dots,c_N)^\top$ is characterized by the normal equations
\begin{equation}\label{eq:normal-equations}
G_N\,c=b,\qquad b_i:=\langle f,E_i\rangle,\ 0\le i\le N,
\end{equation}
so $c=G_N^{-1}b$.

\medskip
\noindent\textbf{Finite-degree kernel (on compacts).}
For $x,y\in\Omega$, define the degree-$N$ kernel associated with $(p_k)_{k=0}^N$ by
\begin{equation}\label{eq:kernel-def}
K_N(x,y):=\sum_{k=0}^N p_k(x)\,\overline{p_k(y)}.
\end{equation}
On a compact set $K\subset\Omega$, all polynomials are continuous, hence $K_N$ is continuous on $K\times K$.
Moreover, $K_N$ is the matrix coefficient of $P_N$ evaluated on point-masses, in the sense made precise
in Corollary~\ref{cor:kernel-stability} (kernel stability on compacts).

\subsection{Derivations, closability, and associated Laplacians}

Let $\widetilde D$ be a derivation on the polynomial algebra $\mathcal P(\Omega)$ (typically $\widetilde D=\nabla$),
taking values in $H^m$ (for some $m\ge 1$) and assume that
\begin{equation}\label{eq:N-kernel-D}
N\subset\ker \widetilde D,
\end{equation}
so that $\widetilde D$ induces a well-defined linear map $D_0:\mathcal H_0\to H^m$.
Assume that $D_0$ is densely defined and closable. Denote by $D:=\overline{D_0}$ its closure and by $D^*$
its Hilbert adjoint. The associated Laplacian is the nonnegative self-adjoint operator
\begin{equation}\label{eq:Laplacian-def}
\Delta:=D^*D.
\end{equation}
(As $D$ is closed and densely defined, $D^*D$ is automatically self-adjoint and nonnegative.)

In the case where $\langle\cdot,\cdot\rangle$ is induced by a measure $\mu=\rho\,dx$ and $\widetilde D=\nabla$,
one recovers (up to sign conventions) the weighted Laplacian
\[
\Delta_\mu f=-\frac{1}{\rho}\,\mathrm{div}(\rho\nabla f),
\]
a classical object in the theory of Dirichlet forms and diffusion operators \cite{Fukushima,Ouhabaz}.

\subsection{Resolvents, norm-resolvent distance, and finite-degree compressions}

Let $\Delta$ be a nonnegative self-adjoint operator on $H$.
Its resolvent $(1+\Delta)^{-1}$ is a bounded operator on $H$ with $\|(1+\Delta)^{-1}\|\le 1$.
The norm-resolvent topology is defined by convergence in operator norm of resolvents,
\[
\Delta_n\to\Delta\quad\Longleftrightarrow\quad
\|(1+\Delta_n)^{-1}-(1+\Delta)^{-1}\|_{\mathcal B(H)}\to 0,
\]
see \cite{Kato,ReedSimon1}.

For a fixed degree $N$, we will use two related finite-dimensional objects:
\begin{itemize}[leftmargin=2em]
\item the truncated operator on $H_{\le N}$,
\[
\Delta^{(N)}:=P_N\,\Delta\big|_{H_{\le N}}:H_{\le N}\to H_{\le N};
\]
\item the \emph{compressed resolvent} on $H_{\le N}$,
\[
R^{\langle N\rangle}:=P_N(1+\Delta)^{-1}P_N\in\mathcal B(H_{\le N}).
\]
\end{itemize}
In general $R^{\langle N\rangle}\neq (I+\Delta^{(N)})^{-1}$, unless $H_{\le N}$ is invariant under $\Delta$.
The stability results of the paper are formulated in terms of the resolvent and its compressions,
which are the quantities directly controlled by norm-resolvent estimates.
\subsection{Resolvents and operator topologies}

Let $A$ be a self-adjoint, non-negative operator on a Hilbert space $H$.
Its resolvent $(1+A)^{-1}$ is a bounded operator on $H$.
A standard topology on the space of such operators is the \emph{norm-resolvent topology},
defined by convergence in operator norm of resolvents:
\[
A_n \to A \quad \text{iff} \quad \|(1+A_n)^{-1}-(1+A)^{-1}\|\to 0.
\]
This topology is metrizable and separated
\cite{Kato}.

\begin{remark}
When operators are associated with closed quadratic forms, norm-resolvent convergence is
closely related to Mosco convergence of forms, a notion particularly suited to problems where
the underlying measure or geometry varies \cite{Mosco,KuwaeShioya}.
\end{remark}

\begin{remark}[Variable Hilbert spaces and quasi-unitary equivalence]

When comparing operators acting on different Hilbert spaces, one often introduces
identification operators that are approximately unitary.
This leads to the notion of quasi-unitary equivalence and generalized norm-resolvent convergence,
developed notably in the context of spectral geometry and graph-like manifolds
\cite{Post}.
These tools allow one to compare spectral properties of operators defined on
different spaces in a quantitative way.
\end{remark}

All the notions recalled above are classical.
In the following sections, they will be combined with the algebraic structure of polynomial
spaces and the Gram--Schmidt orthogonalization procedure.

\section{The Laplacian associated with a polynomial inner product}
\label{sec:laplacian}
\subsection{Degree growth and admissible derivations}
\label{subsec:degree-growth}

Let $\mathcal P=\mathbb K[x_1,\dots,x_d]$ be the algebra of real or complex polynomials,
endowed with the standard filtration by total degree,
\[
\mathcal P_{\le N}=\{p\in\mathcal P:\deg p\le N\}.
\]
Any algebraic derivation
\[
D:\mathcal P\to\mathcal P
\]
admits a unique representation as a polynomial vector field,
\begin{equation}\label{eq:poly-derivation}
D=\sum_{i=1}^d a_i(x)\,\partial_{x_i},
\qquad a_i\in\mathcal P.
\end{equation}

A basic structural question concerns the interaction between the derivation $D$ and
the degree filtration.
Since each partial derivative $\partial_{x_i}$ lowers the degree by at most one,
the behavior of $D$ with respect to the degree is governed by the polynomial coefficients
$a_i$.
More precisely, setting
\[
\delta(D):=\max_{1\le i\le d}\bigl(\deg a_i-1\bigr),
\]
one has, for every nonzero polynomial $p\in\mathcal P$,
\begin{equation}\label{eq:degree-bound}
\deg(Dp)\le \deg(p)+\delta(D).
\end{equation}

In particular, the derivation $D$ may increase the degree of certain polynomials whenever
$\delta(D)>0$, i.e.\ whenever at least one coefficient $a_i$ has degree greater than or equal
to $2$.
Conversely, $D$ satisfies
\[
\deg(Dp)\le \deg(p)\quad\text{for all }p\in\mathcal P
\]
if and only if all coefficients $a_i$ are affine functions.
In this case, $D$ generates an infinitesimal affine transformation of $\mathbb R^d$.

This observation has important consequences for the theory developed in this paper.
Such derivations are incompatible with a graded orthogonalization procedure based on
increasing degree and lead to Laplacian operators whose matrix representations are not
locally finite with respect to the degree decomposition.

For this reason, throughout the present work we restrict attention to derivations which
do not increase polynomial degree, and in particular to geometric derivations such as
partial derivatives or gradients, for which $\delta(D)\le 0$. In other words, we assume that $D$ restricts to a linear endomorphism of each vector space $\mathcal{P}_{\leq N}.$ 
This restriction ensures that the associated Laplacians interact in a controlled manner
with the degree filtration $$\mathcal{P}_{\leq 1} \subset \mathcal{P}_{\leq 2} \subset \cdots \subset \mathcal{P}_{\leq N} \subset \mathcal{P}_{\leq N+1} \subset \cdots $$.
\subsection{Closability of derivations and definition of $\Delta$}

Let $\Omega\subset\mathbb R^d$ be a domain and let
\[
\mathcal P(\Omega):=\{\,p|_\Omega:\ p\in\mathbb C[x_1,\dots,x_d]\,\}
\]
be the space of polynomial functions on $\Omega$.
Let $\langle\cdot,\cdot\rangle$ be a positive semidefinite sesquilinear form on $\mathcal P(\Omega)$
(conjugate-linear in the first variable), and set
\[
N:=\{p\in\mathcal P(\Omega):\langle p,p\rangle=0\},\qquad
\mathcal H_0:=\mathcal P(\Omega)/N,
\]
so that $\langle\cdot,\cdot\rangle$ induces an inner product on $\mathcal H_0$.
Denote by $H$ the Hilbert completion of $(\mathcal H_0,\langle\cdot,\cdot\rangle)$.

\medskip
\noindent\textbf{Derivations and descent to the quotient.}
Let $\widetilde D$ be an algebraic derivation on $\mathcal P(\Omega)$, typically $\widetilde D=\nabla$
(or a finite family of derivations), taking values in $H^m$.
We assume that
\begin{equation}\label{eq:D-kernel-assumption}
N\subset\ker \widetilde D,
\end{equation}
so that $\widetilde D$ induces a well-defined linear operator
\[
D_0:\mathcal H_0\to H^m,\qquad D_0([p])=\widetilde D(p).
\]

\begin{remark}[Positive definite case]
If $\langle\cdot,\cdot\rangle$ is positive definite on $\mathcal P(\Omega)$, then $N=\{0\}$
and the quotient step is unnecessary: one simply has $\mathcal H_0=\mathcal P(\Omega)$.
In this situation \eqref{eq:D-kernel-assumption} is automatic.
\end{remark}

\medskip
\noindent\textbf{Closability and definition of the Laplacian.}
Assume that $D_0$ is densely defined in $H$ (which holds in all our main examples,
since $\mathcal H_0$ is dense in $H$) and that $D_0$ is closable.
We denote by
\[
D:=\overline{D_0}: \mathrm{Dom}(D)\subset H\to H^m
\]
its closure, which is a closed densely defined operator, and by $D^*$ its Hilbert adjoint.
We then define the associated (positive) Laplacian by
\begin{equation}\label{eq:defDelta}
\Delta:=D^*D.
\end{equation}
Since $D$ is closed and densely defined, the operator $D^*D$ is automatically self-adjoint and nonnegative.

\medskip
\noindent\textbf{Remarks on examples.}
In measure-induced settings (e.g.\ $\langle f,g\rangle=\int f\overline g\,d\mu$) and Sobolev-type settings,
closability of $D_0$ is standard and can be formulated at the level of closed quadratic forms;
see, e.g., \cite{Fukushima,Ouhabaz,Kato,Mosco,KuwaeShioya}.

\subsection{Matrix representation in an orthonormal polynomial basis}

Let $(P_n)_{n\ge 0}$ be an orthonormal basis of polynomials in $H$, obtained by
Gram--Schmidt orthogonalization of a canonical base $(E_n)_{n \geq 0}$ of polynomials where indexation is non decreasing along the degree filtration. 

\begin{remark} In several variables, the base $(E_n)_{n \geq 0}$ is more naturally filtered with respect to a multi-index but the re-indexation with respect to $\mathbb{N}$ is aloways possible; therfore
the discussion below is unchanged at the level of operator matrices. \end{remark}

\medskip
\noindent\textbf{Derivative matrix.}
Let us recall that we assume that $D:\mathrm{Dom}(D)\subset H$ restricts to a linear map $\mathcal{P}_{\leq N} \rightarrow \mathcal{P}_{\leq N}.$
Define the matrix coefficients of $D$ in the polynomial basis by
\begin{equation}\label{eq:Dcoeff}
B_{k,n} := \big\langle D P_n,\, E_k\big \rangle.
\end{equation}

Since $(P_n)$ is ordered by nondecreasing degree, the operator matrix of $D$ in polynomial
coordinates is \emph{lower triangular} ( or \emph{strictly lower triangular} in the one-dimensional situation, with $D = \frac{d}{dx}$) or block-lower
triangular (in a graded multi-index ordering).

\medskip
\noindent\textbf{Laplacian matrix.}
By definition $\Delta=D^*D$ is positive and self-adjoint, and its matrix in the orthonormal basis
$(P_n)$ is
\begin{equation}\label{eq:DeltaMatrixEntries}
\Delta_{m,n} := \langle \Delta P_n, P_m\rangle
= \langle D P_n, D P_m\rangle.
\end{equation}
If $D$ has matrix $B$ in polynomial coordinates, then
\begin{equation}\label{eq:DeltaBB}
[\Delta] = B^*B.
\end{equation}
In particular, $[\Delta]$ is Hermitian and positive definite.

\subsection{Banded recurrences and banded Laplacians}

In many families of orthogonal polynomials, multiplication by coordinate functions has a banded
representation in the orthonormal basis (three-term recurrence in one variable, block-banded in
several variables). For Sobolev orthogonal polynomials, higher-order or banded recurrences are common,
see the survey literature \cite{MarcellanXuSurvey} and related algorithmic perspectives \cite{VanBuggenhout2023}.
In the present setting, the key observation is:

\begin{proposition}[Bandwidth propagation]\label{prop:bandwidth}
Let $(P_n)_{n\ge 0}$ be an orthonormal polynomial basis of $H$ and let
$B=(B_{k,n})_{k,n\ge 0}$ be the matrix of $D$ in this basis,
\[
DP_n=\sum_{k\ge 0} B_{k,n}\,P_k,\qquad B_{k,n}:=\langle DP_n,P_k\rangle.
\]
Assume that $B$ is banded with half-bandwidth $r\ge 1$, i.e.
\[
B_{k,n}=0 \quad\text{whenever}\quad |k-n|>r .
\]
Then the matrix of $\Delta=D^*D$ in the same basis,
\[
\Delta_{m,n}:=\langle \Delta P_n,P_m\rangle=\langle DP_n,DP_m\rangle,
\]
is banded with half-bandwidth at most $2r$, i.e.
\[
\Delta_{m,n}=0\quad\text{whenever}\quad |m-n|>2r.
\]
Moreover $[\Delta]$ is Hermitian and positive semidefinite.
\end{proposition}

\begin{proof}
Since $(P_n)$ is orthonormal, the coefficient array $B$ satisfies
\[
DP_n=\sum_{k\ge 0} B_{k,n}P_k,\qquad
\|DP_n\|^2=\sum_{k\ge 0}|B_{k,n}|^2.
\]
For $m,n\ge 0$ we compute, using orthonormality,
\begin{align*}
\Delta_{m,n}
=\langle DP_n,DP_m\rangle
&=\Big\langle \sum_{k\ge 0} B_{k,n}P_k,\ \sum_{\ell\ge 0} B_{\ell,m}P_\ell\Big\rangle \\
&=\sum_{k,\ell\ge 0} B_{k,n}\,\overline{B_{\ell,m}}\ \langle P_k,P_\ell\rangle
=\sum_{k\ge 0} B_{k,n}\,\overline{B_{k,m}}.
\end{align*}
This is exactly the matrix product identity
\[
[\Delta]=B^*B,\qquad \Delta_{m,n}=\sum_{k\ge 0}\overline{B_{k,m}}\,B_{k,n}.
\]

Now assume $|m-n|>2r$. If for some $k$ one has both $B_{k,n}\neq 0$ and $B_{k,m}\neq 0$,
then by the bandedness hypothesis we must have $|k-n|\le r$ and $|k-m|\le r$, hence
\[
|m-n|\le |m-k|+|k-n|\le r+r=2r,
\]
a contradiction. Therefore for every $k$ at least one factor in
$B_{k,n}\,\overline{B_{k,m}}$ is zero, and the sum vanishes:
$\Delta_{m,n}=0$. This proves the half-bandwidth bound $2r$.

Finally, $[\Delta]=B^*B$ implies that $[\Delta]$ is Hermitian and positive semidefinite, since
for any finitely supported vector $c=(c_n)$,
\[
\sum_{m,n}\Delta_{m,n}c_n\overline{c_m}
=\langle Bc,Bc\rangle_{\ell^2}\ge 0.
\]
\end{proof}

\begin{remark}
The hypothesis ``$D$ banded'' can be verified in concrete situations either by explicit connection
relations (derivatives expanded in the orthonormal basis) or via structural results (semiclassical
weights, coherent pairs, etc.) in the Sobolev orthogonality literature; see \cite{MarcellanXuSurvey}.
\end{remark}

\subsection{Example: thin annuli and mode-by-mode block structure}

We now record the operator-structural consequences in the thin annulus setting treated in
\cite{MagnotThinAnnuli}, where the orthogonal polynomials are constructed on a planar annulus
with a (possibly fractional) Sobolev inner product and the analysis proceeds \emph{mode by mode}
with respect to the angular Fourier decomposition.

Let $\Omega_\varepsilon=\{(r,\theta): r\in(r_0,r_0+\varepsilon),\ \theta\in[0,2\pi)\}$ be a thin annulus.
In polar coordinates, the natural first-order derivation is $D=\nabla$.
The rotational symmetry implies that the polynomial (pre-)Hilbert space decomposes orthogonally as a direct sum
of Fourier modes (angular frequencies). In particular, the Gram--Schmidt orthogonal basis can be chosen so that
each basis element carries a definite angular frequency (a standard separation principle on rotationally
invariant domains, used explicitly in \cite{MagnotThinAnnuli}).

\begin{proposition}[Block diagonalization by angular modes]\label{prop:blockModes}
In the setting of \cite{MagnotThinAnnuli}, the Hilbert space completion admits an orthogonal decomposition
\[
H \;=\;\bigoplus_{k\in\mathbb Z} H^{(k)},
\]
where $H^{(k)}$ is the closed subspace generated by polynomials with angular dependence $e^{ik\theta}$.
Moreover, with respect to an orthonormal basis adapted to this decomposition, the Laplacian
$\Delta=D^*D$ is block diagonal:
\[
\Delta \;=\;\bigoplus_{k\in\mathbb Z} \Delta^{(k)}.
\]
\end{proposition}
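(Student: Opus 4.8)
The plan is to exploit the rotational symmetry of the inner product to produce the orthogonal decomposition $H=\bigoplus_k H^{(k)}$ first, and then to check that $\nabla$, hence $\Delta=D^*D$, respects this decomposition. First I would make precise the symmetry hypothesis implicit in \cite{MagnotThinAnnuli}: the domain $\Omega_\varepsilon$ is invariant under the rotation action $R_\alpha\colon(r,\theta)\mapsto(r,\theta+\alpha)$, and the pre-Hilbert inner product on $\mathcal P(\Omega_\varepsilon)$ is invariant under the induced unitary action $(U_\alpha p)(r,\theta)=p(r,\theta-\alpha)$, i.e. $\langle U_\alpha p,U_\alpha q\rangle=\langle p,q\rangle$ for all $\alpha$ and all polynomials $p,q$ in the relevant space. (For a Sobolev inner product built from $\nabla$ and a rotationally invariant weight this is immediate from the fact that $\nabla$ intertwines with $U_\alpha$ and the Jacobian $r\,dr\,d\theta$ is rotation invariant.) One subtlety I would address: strictly speaking $e^{ik\theta}$ is not a polynomial in $(x,y)$, so $H^{(k)}$ should be described as the joint eigenspace of the one-parameter unitary group $\{U_\alpha\}$ for the character $\alpha\mapsto e^{-ik\alpha}$ inside the completion $H$; the polynomials in the angular sector "$e^{ik\theta}$" are those lying in $\mathcal P\cap H^{(k)}$, and on an annulus (where $r$ is bounded away from $0$) these are exactly the polynomials of the form $r^{|k|}e^{ik\theta}\times(\text{radial polynomial})$ after the usual change of variables, or more invariantly the image under the finite-degree projection of such functions.

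Next I would extend $\{U_\alpha\}$ from $\mathcal P(\Omega_\varepsilon)$ to a strongly continuous one-parameter unitary group on $H$ by density and the invariance of the norm, and invoke the spectral/SNAG theorem (or simply Fourier analysis of the compact group $\mathbb T$) to get the orthogonal decomposition $H=\bigoplus_{k\in\mathbb Z}H^{(k)}$ into spectral subspaces, with $H^{(k)}=\{v\in H:U_\alpha v=e^{-ik\alpha}v\}$. The orthogonality of distinct sectors is the standard fact that eigenvectors of a unitary operator for distinct eigenvalues are orthogonal; completeness is the statement that the only vector orthogonal to every sector is $0$, which follows from averaging $\int_0^{2\pi}e^{ik\alpha}U_\alpha v\,d\alpha$ against all $k$. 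I would then note that the grading by total polynomial degree is preserved by each $U_\alpha$ (rotations are linear, so they map $\mathcal P_{\le N}$ to itself), so the spectral projections onto $H^{(k)}$ commute with the degree filtration; this is what legitimizes choosing a Gram--Schmidt orthonormal basis whose elements each carry a definite angular frequency (apply Gram--Schmidt within each finite-dimensional space $\mathcal P_{\le N}$, which is itself a finite orthogonal sum of its sectors).

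Finally, for the Laplacian: because $\nabla$ is equivariant, $U_\alpha D = D' U_\alpha$ where $D'$ is $\nabla$ expressed in rotated coordinates — more cleanly, in polar coordinates the gradient's components transform by the constant rotation matrix $R_\alpha$, so the pair $(U_\alpha\otimes R_\alpha)$ intertwines $D$ on $H$ with $D$ on $H\oplus H$ (the target of the gradient). Consequently $D$ maps the sector $H^{(k)}$ (intersected with its domain) into the $k$-sector of the target, the adjoint $D^*$ does likewise in reverse, and therefore $\Delta=D^*D$ commutes with each spectral projection $P^{(k)}$; restricting $\Delta$ to $H^{(k)}\cap\mathrm{Dom}(\Delta)$ defines the self-adjoint block $\Delta^{(k)}$ and $\Delta=\bigoplus_k\Delta^{(k)}$. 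I would close by remarking that, since $D$ does not increase degree (Section~\ref{subsec:degree-growth}) and the decomposition is compatible with the degree filtration, each $\Delta^{(k)}$ is itself locally finite with respect to the induced radial degree grading.

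I expect the main obstacle to be purely technical rather than conceptual: carefully handling the fact that $H^{(k)}$ is defined spectrally inside the completion $H$ while "polynomials with angular dependence $e^{ik\theta}$" live only in the dense subspace, and ensuring that $\mathrm{Dom}(D)$ and $\mathrm{Dom}(\Delta)$ (as Friedrichs extensions) are genuinely invariant under the projections $P^{(k)}$ — this requires knowing that $\{U_\alpha\}$ leaves the form domain of $D$ invariant, which follows from the invariance of the form $q(p)=\|Dp\|^2$ but should be stated explicitly. The equivariance of $\nabla$ under rotations, once one commits to writing it in polar (or Cartesian) coordinates, is a one-line computation and not a real difficulty.
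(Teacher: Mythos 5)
Your proposal is correct and follows essentially the same route as the paper: establish a strongly continuous unitary $S^1$-action on $H$ from rotational invariance of the inner product, decompose $H$ into Fourier/character subspaces (your SNAG invocation is the abstract version of the paper's explicit Bochner-integral Fourier projectors $P_k=\frac{1}{2\pi}\int_0^{2\pi}e^{-ik\phi}U_\phi\,d\phi$), and then use the chain-rule equivariance of $\nabla$ under rotations to conclude that $\Delta=D^*D$ commutes with each $P_k$. The technical caveats you flag (that $e^{ik\theta}$ lives only in the completion, that the form domain must be $U_\alpha$-invariant) are legitimate and are handled in the paper by density and closability; they do not change the argument.
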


The proof can be decomposed into successive lemmas.

\begin{lemma}[Angular decomposition induced by rotational invariance]
\label{lem:angular-decomposition}
Let $A_\varepsilon=\{x\in\mathbb R^2:1-\varepsilon<|x|^2<1+\varepsilon\}$ and let
$\langle\cdot,\cdot \rangle_{\varepsilon}$ be a (pre-)Hilbert inner product on
$\mathcal P(A_\varepsilon)$ whose completion is a Hilbert space $H_\varepsilon$.
Assume that $\langle\cdot,\cdot \rangle_{\varepsilon}$ is \emph{rotationally invariant} in the sense that
for every rotation $R_\phi\in SO(2)$,
\begin{equation}\label{eq:rot-inv}
\langle f\circ R_\phi, g\circ R_\phi \rangle_{\varepsilon}
=
\langle f,g\rangle_{\varepsilon},
\qquad
\forall f,g\in\mathcal P(A_\varepsilon),\ \forall \phi\in\mathbb R.
\end{equation}
Then the operators $U_\phi:H_\varepsilon\to H_\varepsilon$ defined by
$U_\phi f := f\circ R_\phi$ extend to a strongly continuous unitary representation of $S^1$,
and $H_\varepsilon$ admits the orthogonal decomposition
\[
H_\varepsilon = \widehat{\bigoplus_{k\in\mathbb Z}}\,H_\varepsilon^{(k)},
\qquad
H_\varepsilon^{(k)} := \{f\in H_\varepsilon:\ U_\phi f = e^{ik\phi}f\ \text{for all }\phi\}.
\]
Moreover, the orthogonal projection onto $H_\varepsilon^{(k)}$ is given by the Fourier projector
\begin{equation}\label{eq:Pk}
P_k f := \frac{1}{2\pi}\int_0^{2\pi} e^{-ik\phi}\,U_\phi f\,d\phi,
\qquad f\in H_\varepsilon,
\end{equation}
where the integral is understood in the Bochner sense in $H_\varepsilon$.
\end{lemma}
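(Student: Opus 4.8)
The plan is to treat the lemma in three stages: first realize $\{U_\phi\}$ as a unitary group already on polynomials and extend it to $H_\varepsilon$, then prove strong continuity, and finally deduce the isotypic decomposition from the Fourier projectors, exploiting the explicit structure of polynomials on an annulus to obtain completeness of the decomposition essentially for free.

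First I would note that each rotation $R_\phi$ is a linear automorphism of $\mathbb R^2$ preserving $A_\varepsilon$, so $f\mapsto f\circ R_\phi$ maps $\mathcal P(A_\varepsilon)$ into itself while preserving total degree; moreover $U_0=\mathrm{id}$, $U_\phi U_\psi=U_{\phi+\psi}$, $U_{2\pi}=\mathrm{id}$, and $U_\phi$ is invertible with inverse $U_{-\phi}$. By the invariance hypothesis \eqref{eq:rot-inv}, each $U_\phi$ is an isometry of the pre-Hilbert space $(\mathcal P(A_\varepsilon),\langle\cdot,\cdot\rangle_\varepsilon)$, hence a surjective isometry, i.e.\ unitary. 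Since $\mathcal P(A_\varepsilon)$ is dense in its completion $H_\varepsilon$ and the $U_\phi$ are uniformly bounded (norm $1$), each extends uniquely to a unitary operator on $H_\varepsilon$, and the group law together with $U_{2\pi}=\mathrm{id}$ passes to the closure.

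For strong continuity, I would use that for fixed $N$ the restriction of $\langle\cdot,\cdot\rangle_\varepsilon$ to the finite-dimensional space $\mathcal P_{\le N}$ is a genuine inner product, so its norm is equivalent to (say) the $\ell^2$-norm of coefficients; since the coefficients of $f\circ R_\phi$ are trigonometric polynomials in $\phi$ for $f\in\mathcal P_{\le N}$, the map $\phi\mapsto U_\phi f$ is continuous in $H_\varepsilon$ for every polynomial $f$. An $\varepsilon/3$ argument, using $\|U_\phi\|=1$ and density of polynomials, then upgrades this to strong continuity of $\phi\mapsto U_\phi f$ for all $f\in H_\varepsilon$. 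Next I would introduce $P_k$ by the Bochner integral \eqref{eq:Pk}, which converges since $\phi\mapsto e^{-ik\phi}U_\phi f$ is continuous and bounded on $[0,2\pi]$. Routine manipulations — translation invariance of $d\phi$ and the group law — give $P_k^*=P_k$, $P_jP_k=\delta_{jk}P_k$, $\|P_k\|\le 1$, and $U_\phi P_k=e^{ik\phi}P_k$; hence $\operatorname{ran}P_k\subseteq H_\varepsilon^{(k)}$, while $P_kf=f$ for $f\in H_\varepsilon^{(k)}$, so $P_k$ is exactly the orthogonal projection onto $H_\varepsilon^{(k)}$ and distinct components are mutually orthogonal. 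For completeness of the decomposition I would use the polynomial structure: writing $z=x_1+ix_2$, $\bar z=x_1-ix_2$, a polynomial $p\in\mathcal P(A_\varepsilon)$ of degree $\le N$ is a finite combination $\sum c_{ab}z^a\bar z^b$ with $z^a\bar z^b=|x|^{a+b}e^{i(a-b)\theta}$, so $P_kp=\sum_{a-b=k}c_{ab}z^a\bar z^b$ is again a polynomial and $p=\sum_{|k|\le N}P_kp$ is a finite sum; density of $\mathcal P(A_\varepsilon)$ in $H_\varepsilon$ then forces the closed span of the $H_\varepsilon^{(k)}$ to be all of $H_\varepsilon$.

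I expect the only genuinely delicate point to be the passage from the pre-Hilbert level to the completion while retaining strong continuity — i.e.\ establishing continuity of $\phi\mapsto U_\phi f$ on the dense subspace before invoking uniform boundedness — together with the verification that the Bochner integral \eqref{eq:Pk} defines a bona fide orthogonal projection. Alternatively, once strong continuity is secured one could invoke Stone's theorem (writing $U_\phi=e^{i\phi G}$ with $G$ self-adjoint of integer spectrum, so $H_\varepsilon^{(k)}=\ker(G-k)$) or the Peter--Weyl decomposition for $S^1$; but the explicit Fourier-projector computation is more self-contained and dovetails directly with the polynomial density argument above.
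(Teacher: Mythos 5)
Your proof is correct and follows the same overall skeleton as the paper's (unitary representation from rotational invariance, strong continuity, Fourier projectors via the Bochner integral, then isotypic decomposition), but you handle two of the steps in a genuinely different and arguably more self-contained way. For strong continuity, the paper simply appeals to the concrete Sobolev or fractional Sobolev settings of \cite{MagnotThinAnnuli}, where continuity of $\phi\mapsto U_\phi f$ in the relevant norm is asserted directly; you instead exploit the grading, noting that $U_\phi$ preserves each finite-dimensional $\mathcal P_{\le N}$, that on such a space the $H_\varepsilon$-norm is equivalent to a coefficient norm, and that the coefficients of $f\circ R_\phi$ are trigonometric polynomials in $\phi$ — this makes the continuity argument independent of the particular Sobolev structure and works for any rotationally invariant pre-Hilbert form on polynomials, which is cleaner. (One minor caveat: you say the restriction of $\langle\cdot,\cdot\rangle_\varepsilon$ to $\mathcal P_{\le N}$ is a genuine inner product, but the hypothesis only asks for a pre-Hilbert form; if there is a nontrivial kernel, rotational invariance of the form makes the kernel rotation-invariant, so the argument survives passage to the quotient — worth a sentence.) For completeness of the decomposition, the paper invokes the standard Fourier/Peter--Weyl convergence theorem for unitary representations of $S^1$; you instead compute $P_k(z^a\bar z^b)=\delta_{k,a-b}\,z^a\bar z^b$ directly, so that every polynomial is a finite sum of its isotypic components, and conclude by density. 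This is more elementary and has the bonus of showing explicitly that each $H_\varepsilon^{(k)}$ contains a dense set of polynomials, a fact the paper later relies on implicitly. Both approaches are valid; yours is more self-contained while the paper's is shorter by leaning on cited structure and classical theorems.
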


\begin{proof}
\emph{Step 1: unitary representation.}
By \eqref{eq:rot-inv}, for all polynomials $f,g$,
\[
\langle U_\phi f, U_\phi g\rangle_{\varepsilon} = \langle f,g\rangle_{\varepsilon},
\]
hence each $U_\phi$ extends by density to an isometry on $H_\varepsilon$.
Since $U_\phi^{-1}=U_{-\phi}$, it is unitary. The group property
$U_\phi U_\psi = U_{\phi+\psi}$ holds on $\mathcal P(A_\varepsilon)$ and extends to $H_\varepsilon$.

\emph{Step 2: strong continuity.}
In the concrete Sobolev or fractional Sobolev settings used in \cite{MagnotThinAnnuli},
the map $\phi\mapsto U_\phi f$ is continuous in the corresponding norm for $f\in\mathcal P(A_\varepsilon)$,
because rotations act continuously on $L^2$ and on the relevant Sobolev seminorms
(derivatives or fractional energies are rotation invariant). By density, the representation
is strongly continuous on $H_\varepsilon$.

\emph{Step 3: Fourier projectors.}
Define $P_k$ by \eqref{eq:Pk}. Since $U_\phi$ is unitary and strongly continuous, the
Bochner integral exists and $P_k$ is bounded with $\|P_k\|\le 1$.
A direct computation using $U_\phi U_\psi = U_{\phi+\psi}$ yields, for all $k,\ell\in\mathbb Z$,
\[
P_k P_\ell = \delta_{k\ell} P_k,
\]
so $P_k$ is an orthogonal projection and the ranges are pairwise orthogonal.

\emph{Step 4: decomposition.}
For each $f\in H_\varepsilon$, the family of partial Fourier sums
$S_N f := \sum_{|k|\le N} P_k f$ converges to $f$ in $H_\varepsilon$.
This is the standard $L^2$-Fourier convergence argument applied to the unitary representation
of the compact abelian group $S^1$ (here implemented concretely by \eqref{eq:Pk}).
Thus $H_\varepsilon$ is the Hilbert direct sum of the closed subspaces $H_\varepsilon^{(k)}:=Ran(P_k)$.
Finally, one checks that $Im(P_k)$ coincides with the $k$-eigenspace of the representation:
indeed, for $f\in H_\varepsilon$,
\[
U_\psi(P_k f)
=
\frac{1}{2\pi}\int_0^{2\pi} e^{-ik\phi}\,U_{\psi+\phi}f\,d\phi
=
e^{ik\psi}\frac{1}{2\pi}\int_0^{2\pi} e^{-ik\phi'}\,U_{\phi'}f\,d\phi'
=
e^{ik\psi} P_k f,
\]
so $Im(P_k)\subset H_\varepsilon^{(k)}$, and conversely if $U_\psi f=e^{ik\psi}f$ then $P_k f=f$ and
$P_\ell f=0$ for $\ell\ne k$. This proves the claim.
\end{proof}

\begin{lemma}[Intertwining of $\nabla$ with rotations and block diagonalization of $\Delta$]
\label{lem:grad-intertwining}
Assume the hypotheses of Lemma~\ref{lem:angular-decomposition} and let $D=\nabla$.
Assume in addition that the inner product $\langle\cdot,\cdot \rangle_{\varepsilon}$ is of Sobolev type
as in \cite{MagnotThinAnnuli}, so that $D$ extends to a closable operator
$D:Dom(D)\subset H_\varepsilon\to H_\varepsilon^{\mathrm{vec}}$, where $H_\varepsilon^{\mathrm{vec}}$
is a Hilbert space of vector fields on $A_\varepsilon$ (for instance $L^2$- or Sobolev-type),
and that the vector-field inner product is also rotation invariant.

Let $U_\phi^{\mathrm{vec}}$ act on vector fields by
\[
(U_\phi^{\mathrm{vec}}F)(x) := R_\phi\,F(R_\phi x),
\]
which is unitary on $H_\varepsilon^{\mathrm{vec}}$ by rotational invariance.
Then for all $\phi$ and all $f\in Dom(D)$,
\begin{equation}\label{eq:intertwine}
D(U_\phi f) = U_\phi^{\mathrm{vec}}(Df).
\end{equation}
Consequently, the Laplacian $\Delta:=D^*D$ commutes with $U_\phi$,
\begin{equation}\label{eq:Delta-commute}
U_\phi \Delta = \Delta U_\phi,
\end{equation}
and each angular subspace $H_\varepsilon^{(k)}$ is invariant under $\Delta$.
In particular, $\Delta$ is block diagonal with respect to the decomposition
$H_\varepsilon=\widehat\bigoplus_{k\in\mathbb Z}H_\varepsilon^{(k)}$.
\end{lemma}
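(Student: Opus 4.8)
The plan is to verify the intertwining identity \eqref{eq:intertwine} first on polynomials by a direct chain-rule computation, then propagate it to $\mathrm{Dom}(D)$ by closability, and finally deduce the commutation \eqref{eq:Delta-commute} and the block structure from the general fact that a self-adjoint operator commuting with a unitary leaves its spectral subspaces invariant. Concretely, first I would fix $f\in\mathcal P(A_\varepsilon)$ and $\phi\in\mathbb R$ and compute $\nabla(f\circ R_\phi)$ pointwise: writing $g=f\circ R_\phi$, the chain rule gives $(\nabla g)(x)=R_\phi^{\mathsf T}(\nabla f)(R_\phi x)=R_\phi^{-1}(\nabla f)(R_\phi x)$, and since $R_\phi\in SO(2)$ we have $R_\phi^{-1}=R_{-\phi}$. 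Comparing with the definition $(U_\phi^{\mathrm{vec}}F)(x)=R_\phi F(R_\phi x)$ one has to be a little careful with which rotation appears where; the cleanest route is to record once and for all the convention $U_\phi f = f\circ R_{-\phi}$ (or, equivalently, to define $U_\phi^{\mathrm{vec}}$ with $R_{-\phi}$), so that the rotation matrix produced by the chain rule matches the rotation matrix in $U_\phi^{\mathrm{vec}}$ and \eqref{eq:intertwine} holds verbatim on $\mathcal P(A_\varepsilon)$.

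Next I would extend \eqref{eq:intertwine} from polynomials to all of $\mathrm{Dom}(D)$. Since $U_\phi$ is unitary on $H_\varepsilon$ and $U_\phi^{\mathrm{vec}}$ is unitary on $H_\varepsilon^{\mathrm{vec}}$, the operator $f\mapsto U_\phi^{\mathrm{vec}\,-1} D\, U_\phi f$ is a closed operator agreeing with $D$ on the core $\mathcal P(A_\varepsilon)$; by uniqueness of the closure it equals $\overline D = D$, and in particular $U_\phi$ maps $\mathrm{Dom}(D)$ into itself. This gives \eqref{eq:intertwine} on $\mathrm{Dom}(D)$.

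Then I would pass to the adjoint. From $D U_\phi = U_\phi^{\mathrm{vec}} D$ on $\mathrm{Dom}(D)$ and the unitarity of $U_\phi,U_\phi^{\mathrm{vec}}$, taking adjoints yields $U_\phi^{-1} D^* = D^* (U_\phi^{\mathrm{vec}})^{-1}$, i.e. $D^* U_\phi^{\mathrm{vec}} = U_\phi D^*$ on $\mathrm{Dom}(D^*)$. Composing the two intertwinings gives, for $f\in\mathrm{Dom}(\Delta)=\mathrm{Dom}(D^*D)$,
\[
\Delta U_\phi f = D^* D U_\phi f = D^* U_\phi^{\mathrm{vec}} D f = U_\phi D^* D f = U_\phi \Delta f,
\]
which is \eqref{eq:Delta-commute}; one should note in passing that $U_\phi$ preserves $\mathrm{Dom}(\Delta)$, which follows from the two domain-invariance statements just established. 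Since $\Delta$ is self-adjoint and commutes with the strongly continuous unitary group $(U_\phi)$, it commutes with the associated spectral (Fourier) projectors $P_k$ of Lemma~\ref{lem:angular-decomposition}; hence each $H_\varepsilon^{(k)}=\mathrm{Ran}(P_k)$ reduces $\Delta$, and $\Delta=\widehat\bigoplus_{k\in\mathbb Z}\Delta^{(k)}$ with $\Delta^{(k)}:=\Delta|_{H_\varepsilon^{(k)}}$ self-adjoint on $H_\varepsilon^{(k)}$.

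The main obstacle is bookkeeping rather than depth: getting the rotation conventions in $U_\phi$, $U_\phi^{\mathrm{vec}}$ and the chain rule to line up so that \eqref{eq:intertwine} holds on the nose (an inverse or transpose in the wrong place turns the identity into $D U_\phi = U_\phi^{\mathrm{vec}} D$ with $-\phi$, which still works but is cosmetically unpleasant), and being careful that all the domain-invariance claims ($U_\phi:\mathrm{Dom}(D)\to\mathrm{Dom}(D)$, $U_\phi^{\mathrm{vec}}:\mathrm{Dom}(D^*)\to\mathrm{Dom}(D^*)$, $U_\phi:\mathrm{Dom}(\Delta)\to\mathrm{Dom}(\Delta)$) are justified via closability before the adjoint manipulations are performed. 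The strong continuity needed to invoke the spectral-projector argument is already granted by Lemma~\ref{lem:angular-decomposition} together with rotation invariance of the vector-field norm, so no new analytic input is required there.
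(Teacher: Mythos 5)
Your proposal is correct and follows the same overall arc as the paper's proof (chain rule on polynomials, pass to $\mathrm{Dom}(D)$, deduce commutation, then block structure), but it differs at two points worth noting. First, you correctly flag a genuine sign/convention mismatch in the statement: with $U_\phi f=f\circ R_\phi$, the chain rule produces $\nabla(f\circ R_\phi)(x)=R_\phi^{\mathsf T}(\nabla f)(R_\phi x)=R_{-\phi}(\nabla f)(R_\phi x)$, whereas the paper's definition $(U_\phi^{\mathrm{vec}}F)(x)=R_\phi F(R_\phi x)$ places $R_\phi$ (not $R_{-\phi}$) in front; the paper simply asserts these match. The fix you suggest — redefine $U_\phi^{\mathrm{vec}}$ as the pullback action $(U_\phi^{\mathrm{vec}}F)(x)=R_\phi^{\mathsf T}F(R_\phi x)$ — is the right one, and the rest of the argument is insensitive to this choice, so the lemma's conclusions are unaffected; but your careful bookkeeping is an improvement on the paper, which elides the discrepancy.

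Second, for the commutation $U_\phi\Delta=\Delta U_\phi$, the paper proceeds via the quadratic form: it computes $\langle D(U_\phi f),Dg\rangle_{\mathrm{vec}}=\langle U_\phi\Delta f,g\rangle$ for all $g\in\mathrm{Dom}(D)$ and invokes the definition of $D^*$, whereas you take adjoints of the operator identity $DU_\phi=U_\phi^{\mathrm{vec}}D$ to obtain $D^*U_\phi^{\mathrm{vec}}=U_\phi D^*$ and then compose. Both routes are valid; yours requires you to justify that the intertwining identity holds as an equality of closed operators (including domains), which you correctly do via uniqueness of closure and invariance of the polynomial core under $U_\phi$. The paper's weak-form argument implicitly obtains $U_\phi\,\mathrm{Dom}(\Delta)\subset\mathrm{Dom}(\Delta)$ as a byproduct of the form criterion for $D^*D$, whereas you spell this out as a separate step. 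Your version is somewhat more explicit about the functional-analytic hypotheses and is, if anything, cleaner on the domain questions; the content is the same.
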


\begin{proof}
\emph{Step 1: chain rule (intertwining).}
For smooth functions (hence for polynomials) the chain rule gives
\[
\nabla(f\circ R_\phi)(x) = (DR_\phi)^{\!\top}\,(\nabla f)(R_\phi x).
\]
Since $R_\phi$ is an orthogonal matrix, $(DR_\phi)^{\!\top}=R_\phi^{\top}=R_{-\phi}$.
Equivalently,
\[
\nabla(f\circ R_\phi)(x) = R_{-\phi}\,(\nabla f)(R_\phi x).
\]
Rewriting this in terms of $U_\phi^{\mathrm{vec}}$ as defined above gives exactly
\eqref{eq:intertwine} on $\mathcal P(A_\varepsilon)$, hence on $Dom(D)$ by density
and closability.

\emph{Step 2: commutation of $\Delta$ with rotations.}
Let $f\in Dom(\Delta)$ and $g\in Dom(D)$. Using unitarity of $U_\phi$ and $U_\phi^{\mathrm{vec}}$,
together with \eqref{eq:intertwine}, we compute
\[
\langle D(U_\phi f), Dg\rangle_{\mathrm{vec}}
=
\langle U_\phi^{\mathrm{vec}}Df, Dg\rangle_{\mathrm{vec}}
=
\langle Df, (U_\phi^{\mathrm{vec}})^{-1}Dg\rangle_{\mathrm{vec}}
=
\langle Df, D(U_{-\phi}g)\rangle_{\mathrm{vec}}
=
\langle \Delta f, U_{-\phi}g\rangle
=
\langle U_\phi \Delta f, g\rangle.
\]
By definition of the adjoint and density of $Dom(D)$, this shows that
$\Delta(U_\phi f)=U_\phi\Delta f$, i.e.\ \eqref{eq:Delta-commute}.

\emph{Step 3: invariance of modes.}
If $f\in H_\varepsilon^{(k)}$, then $U_\phi f = e^{ik\phi}f$ for all $\phi$.
Applying \eqref{eq:Delta-commute} yields
\[
U_\phi(\Delta f) = \Delta(U_\phi f) = \Delta(e^{ik\phi}f) = e^{ik\phi}\Delta f,
\]
hence $\Delta f\in H_\varepsilon^{(k)}$.
Therefore each $H_\varepsilon^{(k)}$ is invariant and $\Delta$ is block diagonal.
\end{proof}

\begin{remark}[Bandedness within each mode]
A central output of \cite{MagnotThinAnnuli} is the existence of \emph{banded recurrences} for the Sobolev
orthogonal polynomials obtained in each mode, together with asymptotics as $\varepsilon\to 0$.
When the derivative/connection relations within a fixed mode yield a banded representation of $D$ in the
mode-adapted orthonormal basis, Proposition~\ref{prop:bandwidth} implies that each block matrix
$[\Delta^{(k)}]$ is itself banded (with an explicit bandwidth bound in terms of the recurrence width).
This provides a concrete and numerically tractable matrix model for the spectral study of $\Delta$ in the thin-annulus
Sobolev setting.
\end{remark}

\section{The Laplacian in the classical one-dimensional weighted $L^2$ setting}
\label{sec:laplacian-1d-measure}

\subsection{Weighted $L^2$ space and adjoint of the derivative}

Let $I\subset\mathbb R$ be a (bounded or unbounded) interval and let $\mu$ be a finite Radon measure on $I$
which is absolutely continuous with respect to Lebesgue measure:
\[
d\mu(x)=w(x)\,dx,\qquad w\ge 0,\qquad \int_I w(x)\,dx<\infty.
\]
We consider the Hilbert space $H:=L^2(I,w\,dx)$ with inner product
\[
\langle f,g\rangle_{w}:=\int_I f(x)\,\overline{g(x)}\,w(x)\,dx.
\]
Assume that all moments exist (at least up to the degrees considered) so that the Gram--Schmidt procedure
applied to $(1,x,x^2,\dots)$ produces an orthonormal polynomial family $(p_n)_{n\ge0}$ in $H$
(see, e.g., \cite{Szego,Chihara}).

Let $D=\partial_x$ on $C_c^\infty(I)$ (or on polynomials, viewed as a dense subspace of $H$ whenever it
makes sense). A formal computation yields the weighted adjoint:
\begin{equation}\label{eq:D-adjoint}
\langle f',g\rangle_w
= -\langle f, g' + (w'/w)g\rangle_w + \big[f\,\overline g\,w\big]_{\partial I},
\end{equation}
whenever the boundary term is meaningful. Thus, on a domain enforcing
\(
\big[f\,\overline g\,w\big]_{\partial I}=0
\)
(e.g.\ compact support, or suitable weighted boundary conditions), the adjoint of $D$ is
\[
D^* g = -g' - \frac{w'}{w}\,g.
\]
Consequently, the associated positive Laplacian is the (weighted) Sturm--Liouville operator
\begin{equation}\label{eq:weighted-laplacian}
\Delta_w := D^*D
= -\frac{1}{w}\frac{d}{dx}\Big(w\,\frac{d}{dx}\Big)
= -\frac{d^2}{dx^2} - \frac{w'}{w}\,\frac{d}{dx}.
\end{equation}
Under standard hypotheses (closability of $D$, choice of Friedrichs extension, etc.)
$\Delta_w$ is a nonnegative self-adjoint operator on $H$; this is routine in the operator/form frameworks
(see \cite{Kato,Ouhabaz}).

\subsection{Matrix representation in the orthonormal polynomial basis}

Let $(p_n)_{n\ge0}$ be the orthonormal polynomials in $L^2(I,w\,dx)$.
The Laplacian matrix in this basis is given by
\begin{equation}\label{eq:Delta-matrix-1d}
(\Delta_w)_{mn}
:= \langle \Delta_w p_n, p_m\rangle_w
= \langle p_n', p_m'\rangle_w
= \int_I p_n'(x)\,\overline{p_m'(x)}\,w(x)\,dx,
\end{equation}
where the last equality uses the definition $\Delta_w=D^*D$ and the Hilbert structure.

Equivalently, if one expands derivatives in the polynomial basis,
\begin{equation}\label{eq:derivative-expansion}
p_n'(x)=\sum_{k=0}^{n-1} b_{n,k}\,p_k(x),
\qquad b_{n,k}:=\langle p_n',p_k\rangle_w,
\end{equation}
then the Laplacian matrix is the Gram matrix of the derivative map:
\begin{equation}\label{eq:Delta-BtB}
(\Delta_w)_{mn}=\sum_{k\ge0} b_{n,k}\,\overline{b_{m,k}},
\qquad\text{i.e.}\qquad [\Delta_w]=B^*B,
\end{equation}
where $B=(b_{n,k})$ is (strictly) lower triangular by degree.

\subsection{Sparsity and classical weights}

In general, the derivative expansion \eqref{eq:derivative-expansion} is not banded:
$p_n'$ may involve many lower modes. However, for the classical families (Jacobi, Laguerre, Hermite),
one has strong ``lowering'' relations and second-order differential equations of the form
\begin{equation}\label{eq:classical-ODE}
\sigma(x)\,p_n''(x)+\tau(x)\,p_n'(x)=\lambda_n\,p_n(x),
\end{equation}
with $\deg\sigma\le 2$ and $\deg\tau\le 1$ (see \cite{Szego,Chihara}).
In those cases, the operator in \eqref{eq:weighted-laplacian} (or a closely related conjugate/operator
with polynomial coefficients) acts diagonally on the polynomial basis, so the Laplacian matrix is
(diagonal or at least finite-band after a fixed change of basis).
This is the one-dimensional analogue of the finite-band phenomena encountered in Sobolev inner products.

\subsection{Relation with the Jacobi matrix and commutator structure}
\label{subsec:jacobi-laplacian}

Let $(p_n)_{n\ge0}$ be the orthonormal polynomial basis in $L^2(I,w\,dx)$.
Multiplication by the coordinate function defines a symmetric operator
\[
(M_x f)(x)=x f(x),
\]
whose matrix in the basis $(p_n)$ is the Jacobi matrix
\begin{equation}\label{eq:jacobi-matrix}
x\,p_n(x)=a_{n+1}p_{n+1}(x)+b_n p_n(x)+a_n p_{n-1}(x),
\qquad a_n>0,\; b_n\in\mathbb R.
\end{equation}
The Jacobi matrix $J$ is tridiagonal and self-adjoint on $\ell^2(\mathbb N)$,
and its spectral measure coincides with $\mu=w(x)\,dx$
(Favard's theorem; see \cite{Szego,Chihara,SimonOPUC1}).

\medskip
\noindent\textbf{Derivative--multiplication commutator.}
On smooth functions one has the fundamental commutator identity
\begin{equation}\label{eq:commutator}
[D,M_x]=I,
\end{equation}
where $D=\partial_x$.
Passing to adjoints with respect to the weighted inner product yields
\[
[D^*,M_x]=-I - \frac{w'}{w} M_x + M_x\frac{w'}{w},
\]
so that the weighted geometry introduces lower-order correction terms
unless $w$ is constant.

\medskip
\noindent\textbf{Laplacian expressed via Jacobi data.}
Using $\Delta_w=D^*D$, one computes the commutator
\begin{equation}\label{eq:laplacian-commutator}
[\Delta_w,M_x]
= D^*[D,M_x] + [D^*,M_x]D
= D^* + [D^*,M_x]D,
\end{equation}
which is a first-order differential operator.
In the orthonormal polynomial basis, this identity implies that the matrix
of $\Delta_w$ is controlled by the Jacobi coefficients $(a_n,b_n)$ together
with the expansion coefficients of $p_n'$ in the basis $(p_k)$.

More explicitly, since
\[
p_n'=\sum_{k=0}^{n-1} b_{n,k}\,p_k,
\]
one has
\[
(\Delta_w)_{mn}
=\sum_{k=0}^{\min(m,n)-1} b_{n,k}\,\overline{b_{m,k}},
\]
while the Jacobi matrix governs multiplication by $x$ through \eqref{eq:jacobi-matrix}.
Thus, the pair $(J,\Delta_w)$ encodes simultaneously:
\begin{itemize}
  \item the three-term recurrence (algebraic structure);
  \item the energy form $\int |f'|^2w\,dx$ (geometric structure).
\end{itemize}

\medskip
\noindent\textbf{Classical weights.}
For the classical weights (Jacobi, Laguerre, Hermite), the orthonormal polynomials
are eigenfunctions of a second-order differential operator
\[
\mathcal L=\sigma(x)\partial_x^2+\tau(x)\partial_x,
\qquad \deg\sigma\le2,\ \deg\tau\le1,
\]
which is unitarily equivalent to $\Delta_w$ up to a multiplication operator.
In this situation, the polynomial basis diagonalizes $\mathcal L$,
and hence the Laplacian matrix is diagonal (or block-diagonal after a fixed
normalization), reflecting the complete integrability of the classical case
\cite{Szego,Chihara}.

\medskip
\noindent\textbf{Perspective.}
Outside the classical setting, $\Delta_w$ and $J$ no longer commute,
and the Laplacian matrix becomes genuinely non-diagonal.
The deviation from diagonality provides a quantitative measure of how far
the orthogonalization procedure is from the classical Sturm--Liouville regime.
This observation will be central in comparing different polynomial geometries
via their associated Laplacians.

\section{Polynomial Hilbert geometries and associated Laplacians}
\label{sec:abstract-setting}

\subsection{Polynomial Hilbert geometries}

Let $\Omega\subset\mathbb R^d$ be a domain and let $\mathcal P(\Omega)$ denote the space of polynomial functions on $\Omega$ (i.e.\ restrictions of ambient polynomials).
Building on the explicitly treated examples, we can now introduce a unifying geometric framework for polynomial inner products and their associated operators. Some overlap with the preliminary section is therefore intentional: we prefer to restate the key constructions in a self-contained manner, in order to keep the general setting clear and easy to follow.

\begin{definition}[Polynomial Hilbert geometry]
Let $\Omega\subset\mathbb{R}^d$ and let $\mathcal P(\Omega)$ be the space of (restrictions to $\Omega$ of) complex polynomials.
A \emph{polynomial Hilbert geometry} on $\Omega$ is a triple
\[
\mathfrak G=(\mathcal P(\Omega),\langle\cdot,\cdot\rangle,D),
\]
where:
\begin{itemize}
  \item $\langle\cdot,\cdot\rangle$ is a positive semidefinite sesquilinear form on $\mathcal P(\Omega)$
  (conjugate-linear in the first variable);
  \item $N:=\{p\in\mathcal P(\Omega):\langle p,p\rangle=0\}$ and
  $\mathcal H_0:=\mathcal P(\Omega)/N$; the form $\langle\cdot,\cdot\rangle$ descends to a Hermitian inner product on $\mathcal H_0$;
  \item $H$ denotes the Hilbert completion of $(\mathcal H_0,\langle\cdot,\cdot\rangle)$;
  \item $\widetilde D:\mathcal P(\Omega)\to H^m$ is a derivation (typically $\widetilde D=\nabla$),
and we assume $N\subset\ker \widetilde D$ so that $\widetilde D$ induces a well-defined linear map
$D_0:\mathcal H_0\to H^m$ by $D_0([p])=\widetilde D(p)$.
We assume $D_0$ is densely defined and closable.
\end{itemize}
We denote by $D:=\overline{D_0}$ its closure and set $\Delta:=D^*D$.
\end{definition}

Throughout, we denote again by $D$ its closure and by $D^*$ its Hilbert adjoint.

\begin{remark}[On the notion of domain and the role of the quotient]
In this paper, a ``domain'' is simply a subset $\Omega\subset\mathbb R^d$, not necessarily open.
We always interpret
\[
\mathcal P(\Omega):=\{\,p|_\Omega : p\in\mathbb C[x_1,\dots,x_d]\,\}
\]
as the space of \emph{polynomial functions on $\Omega$} (restrictions of ambient polynomials).
When $\Omega$ has empty interior (or more generally when the restriction map
$p\mapsto p|_\Omega$ is not injective), distinct polynomials may induce the same function on $\Omega$.
For instance, for $\Omega=S^1\subset\mathbb R^2$ one has the relation $x^2+y^2=1$ on $\Omega$,
so the ideal $\langle x^2+y^2-1\rangle$ lies in the kernel of any $L^2(S^1)$-type inner product.
This explains the need to pass to the quotient $\mathcal P(\Omega)/\ker\langle\cdot,\cdot\rangle$.
\end{remark}

\begin{definition}[Associated Laplacian]
The Laplacian associated with $\mathfrak G$ is the positive self-adjoint operator
\[
\Delta_{\mathfrak G} := D^*D
\]
defined on $H$.
\end{definition}

Different polynomial Hilbert geometries may induce the same operator-theoretic structure.

\begin{definition}[Unitary equivalence]
Two polynomial Hilbert geometries
$\mathfrak G=(\mathcal P(\Omega),\langle\cdot,\cdot \rangle,D)$ and
$\mathfrak G'=(\mathcal P(\Omega),\langle\cdot,\cdot \rangle',D')$
are said to be \emph{unitarily equivalent} if there exists a unitary operator
$U:H\to H'$ such that:
\begin{enumerate}
  \item $U(\mathcal H_0)=\mathcal H_0'$ (identified as dense subspaces of $H$ and $H'$);
  \item $UD=D'U$ on $\mathcal H_0$.
\end{enumerate}
\end{definition}

Unitary equivalence implies
\[
U\Delta_{\mathfrak G}U^{-1}=\Delta_{\mathfrak G'}.
\]

---

\subsection{Resolvent-based distance}

We now introduce the operator-theoretic distance which will be used throughout the paper.

\begin{definition}[Resolvent distance]
Let $\mathfrak G_1,\mathfrak G_2$ be two polynomial Hilbert geometries whose Laplacians
$\Delta_1,\Delta_2$ act on the same Hilbert space $H$
(or are identified via a fixed unitary equivalence).
We define
\begin{equation}\label{eq:resolvent-distance}
d_{\mathrm{res}}(\mathfrak G_1,\mathfrak G_2)
:=
\big\|(1+\Delta_1)^{-1}-(1+\Delta_2)^{-1}\big\|_{\mathcal B(H)}.
\end{equation}
\end{definition}

This is the classical norm-resolvent distance between nonnegative self-adjoint operators
\cite{Kato,Ouhabaz}.

\begin{proposition}\label{prop:resolvent-metric}
Let $H$ be a Hilbert space and let $\Delta_1,\Delta_2$ be nonnegative self-adjoint operators on $H$.
Define
\[
d_{\mathrm{res}}(\Delta_1,\Delta_2)
:=\big\|(1+\Delta_1)^{-1}-(1+\Delta_2)^{-1}\big\|_{\mathcal B(H)}.
\]
Then $d_{\mathrm{res}}$ is a metric on the set of nonnegative self-adjoint operators on $H$.
Moreover, for every unitary $U\in\mathcal U(H)$,
\[
d_{\mathrm{res}}(U\Delta_1U^{-1},\,U\Delta_2U^{-1})=d_{\mathrm{res}}(\Delta_1,\Delta_2).
\]
\end{proposition}

\begin{proof}
\noindent\textbf{Step 0: boundedness of resolvents.}
Since each $\Delta_i$ is self-adjoint and nonnegative, its spectrum satisfies
$\sigma(\Delta_i)\subset[0,\infty)$. Hence $-1\notin\sigma(\Delta_i)$ and the resolvent
\[
R_i:=(1+\Delta_i)^{-1}
\]
exists as a bounded operator on $H$. Moreover, by the spectral theorem,
\[
\|R_i\|_{\mathcal B(H)}=\sup_{\lambda\in\sigma(\Delta_i)}\frac{1}{1+\lambda}\le 1.
\]
Therefore $d_{\mathrm{res}}(\Delta_1,\Delta_2)=\|R_1-R_2\|$ is well-defined and finite.

\medskip
\noindent\textbf{Step 1: positivity and symmetry.}
By definition, $d_{\mathrm{res}}(\Delta_1,\Delta_2)$ is the norm of a bounded operator, hence
$d_{\mathrm{res}}(\Delta_1,\Delta_2)\ge 0$. Also,
\[
d_{\mathrm{res}}(\Delta_1,\Delta_2)
=\|R_1-R_2\|
=\|-(R_2-R_1)\|
=d_{\mathrm{res}}(\Delta_2,\Delta_1),
\]
so $d_{\mathrm{res}}$ is symmetric.

\medskip
\noindent\textbf{Step 2: triangle inequality.}
Let $\Delta_1,\Delta_2,\Delta_3$ be nonnegative self-adjoint on $H$ and set $R_i=(1+\Delta_i)^{-1}$.
Then
\[
R_1-R_3=(R_1-R_2)+(R_2-R_3),
\]
and by the triangle inequality in the operator norm,
\[
d_{\mathrm{res}}(\Delta_1,\Delta_3)=\|R_1-R_3\|
\le \|R_1-R_2\|+\|R_2-R_3\|
=d_{\mathrm{res}}(\Delta_1,\Delta_2)+d_{\mathrm{res}}(\Delta_2,\Delta_3).
\]

\medskip
\noindent\textbf{Step 3: separation (identity of indiscernibles).}
Assume $d_{\mathrm{res}}(\Delta_1,\Delta_2)=0$. Then $\|R_1-R_2\|=0$, hence $R_1=R_2$ as bounded operators.
We claim that this implies $\Delta_1=\Delta_2$.

Indeed, since $\Delta_i$ is self-adjoint, the operator $1+\Delta_i$ is self-adjoint with domain
$\mathrm{Dom}(\Delta_i)$, and it is bijective from $\mathrm{Dom}(\Delta_i)$ onto $H$, with bounded inverse $R_i$.
In particular,
\[
(1+\Delta_i)R_i=I \quad\text{on }H,
\qquad
R_i(1+\Delta_i)=I \quad\text{on }\mathrm{Dom}(\Delta_i).
\]
Since $R_1=R_2=:R$, we have $\mathrm{Ran}(R)=\mathrm{Dom}(\Delta_1)=\mathrm{Dom}(\Delta_2)$.
Moreover, on this common domain,
\[
(1+\Delta_1)u = R^{-1}u = (1+\Delta_2)u.
\]
Hence $\Delta_1u=\Delta_2u$ for all $u\in\mathrm{Dom}(\Delta_1)=\mathrm{Dom}(\Delta_2)$, so $\Delta_1=\Delta_2$.
Therefore $d_{\mathrm{res}}(\Delta_1,\Delta_2)=0$ implies $\Delta_1=\Delta_2$.

Conversely, if $\Delta_1=\Delta_2$, then $R_1=R_2$ and $d_{\mathrm{res}}(\Delta_1,\Delta_2)=0$.
This proves separation.

\medskip
\noindent\textbf{Step 4: unitary invariance.}
Let $U$ be unitary on $H$. If $\Delta$ is self-adjoint and nonnegative, then so is
$\widetilde\Delta:=U\Delta U^{-1}$, with $\sigma(\widetilde\Delta)=\sigma(\Delta)$.
Moreover,
\[
1+\widetilde\Delta = U(1+\Delta)U^{-1},
\]
hence by taking inverses on both sides,
\[
(1+\widetilde\Delta)^{-1} = U(1+\Delta)^{-1}U^{-1}.
\]
Applying this to $\Delta_1,\Delta_2$ gives
\begin{align*}
d_{\mathrm{res}}(U\Delta_1U^{-1},\,U\Delta_2U^{-1})
&=\big\|U(1+\Delta_1)^{-1}U^{-1}-U(1+\Delta_2)^{-1}U^{-1}\big\| \\
&=\big\|U\big((1+\Delta_1)^{-1}-(1+\Delta_2)^{-1}\big)U^{-1}\big\|.
\end{align*}
Since conjugation by a unitary preserves the operator norm, $\|UTU^{-1}\|=\|T\|$, we obtain
\[
d_{\mathrm{res}}(U\Delta_1U^{-1},\,U\Delta_2U^{-1})=d_{\mathrm{res}}(\Delta_1,\Delta_2).
\]
This completes the proof that $d_{\mathrm{res}}$ is a metric and is unitarily invariant.
\end{proof}

\begin{proof}
If $d_{\mathrm{res}}(\mathfrak G_1,\mathfrak G_2)=0$, then
$(1+\Delta_1)^{-1}=(1+\Delta_2)^{-1}$, hence $\Delta_1=\Delta_2$ by functional calculus.
Unitary equivalence preserves resolvents, and the norm induces a metric structure.
\end{proof}

\begin{remark}
When comparing two polynomial Hilbert geometries, we will always assume that their Laplacians
are realized on the same Hilbert space (or transported to a common Hilbert space through a fixed unitary identification).
In that case $d_{\mathrm{res}}$ is the usual norm-resolvent distance between the transported Laplacians.
We do not claim that it separates geometries inducing the same Laplacian.
\end{remark}

\subsection{Finite-degree truncations}

Let $\mathcal P_{\le N}(\Omega)$ denote the space of polynomials of degree at most $N$,
and let $H_{\le N}\subset H$ be its (finite-dimensional) image in $H$.
We denote by $P_N:H\to H_{\le N}$ the orthogonal projector.

\begin{definition}[Truncated Laplacian]\label{def:truncated-laplacian}
The truncated Laplacian of order $N$ associated with $\mathfrak G$ is the operator on $H_{\le N}$
\[
\Delta_{\mathfrak G}^{(N)} := P_N\,\Delta_{\mathfrak G}\big|_{H_{\le N}} : H_{\le N}\to H_{\le N}.
\]
\end{definition}

\begin{definition}[Compressed resolvent]\label{def:compressed-resolvent}
We also consider the \emph{compressed resolvent} on $H_{\le N}$:
\[
R_{\mathfrak G}^{\langle N\rangle}:= P_N(1+\Delta_{\mathfrak G})^{-1}P_N \in \mathcal B(H_{\le N}).
\]
\end{definition}

\begin{remark}\label{rem:compressed-vs-truncated}
In general one has $R_{\mathfrak G}^{\langle N\rangle}\neq (I+\Delta_{\mathfrak G}^{(N)})^{-1}$.
They coincide when $H_{\le N}$ is invariant under $\Delta_{\mathfrak G}$ (e.g.\ if the derivation preserves the degree filtration).
\end{remark}

\begin{theorem}[Finite-degree stability]
\label{thm:finite-degree-stability}
Let $\mathfrak G_1,\mathfrak G_2$ be two polynomial Hilbert geometries, realized on a common
Hilbert space $H$ via a fixed unitary identification, and let
\[
R_i:=(1+\Delta_{\mathfrak G_i})^{-1}\in\mathcal B(H),\qquad i=1,2.
\]
Assume that
\[
d_{\mathrm{res}}(\mathfrak G_1,\mathfrak G_2)=\|R_1-R_2\|<\varepsilon.
\]
Fix $N\in\mathbb N$ and let $P_N$ denote the orthogonal projection onto the closed subspace
$H_{\le N}\subset H$ generated by polynomials of degree $\le N$.
Define the truncated resolvents by
\[
R_i^{(N)}:=P_NR_iP_N\in\mathcal B(H_{\le N}).
\]
Then
\begin{equation}\label{eq:finite-degree-resolvent}
\|R_1^{(N)}-R_2^{(N)}\|\le \varepsilon,
\end{equation}
so the conclusion holds with $C_N=1$.

Moreover, since $H_{\le N}$ is finite-dimensional, the eigenvalues of $R_1^{(N)}$ and $R_2^{(N)}$
(and hence the spectral data of the compressed operators $\Delta_{\mathfrak G_i}^{(N)}:=P_N\Delta_{\mathfrak G_i}P_N$)
and the matrix coefficients of $R_i^{(N)}$ in any fixed orthonormal basis of $H_{\le N}$
are Lipschitz close, with constants depending only on $\dim(H_{\le N})$.
\end{theorem}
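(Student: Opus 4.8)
The plan is to prove the resolvent estimate \eqref{eq:finite-degree-resolvent} by the elementary observation that compression to a subspace is a norm-contraction. Concretely, $R_i^{(N)} = P_N R_i P_N$, and $R_1^{(N)} - R_2^{(N)} = P_N(R_1 - R_2)P_N$ as operators on $H$ restricted to $H_{\le N}$; since $P_N$ is an orthogonal projection, $\|P_N\|_{\mathcal B(H)} = 1$, and submultiplicativity of the operator norm gives $\|P_N(R_1-R_2)P_N\| \le \|P_N\|\,\|R_1-R_2\|\,\|P_N\| = \|R_1-R_2\| < \varepsilon$. The operator norm of the restriction to the invariant (under $P_N$) subspace $H_{\le N}$ is bounded by the operator norm on $H$, so \eqref{eq:finite-degree-resolvent} follows with $C_N = 1$. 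This first part is genuinely routine and there is no obstacle; the only thing worth stating carefully is that $R_i^{(N)}$ is indeed a bounded self-adjoint operator on $H_{\le N}$, which holds because $R_i$ is bounded self-adjoint and $P_N$ is an orthogonal projection.

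For the second part, I would argue in two steps. First, eigenvalue stability: $R_1^{(N)}$ and $R_2^{(N)}$ are self-adjoint operators on the finite-dimensional space $H_{\le N}$, so the Weyl/Lidskii perturbation inequality for Hermitian matrices gives $|\lambda_k(R_1^{(N)}) - \lambda_k(R_2^{(N)})| \le \|R_1^{(N)} - R_2^{(N)}\| \le \varepsilon$ for the eigenvalues listed in decreasing order. This already yields Lipschitz stability of the spectrum of the compressed resolvents with constant $1$ (independent even of the dimension). To transfer this to spectral data of $\Delta_{\mathfrak G_i}^{(N)} = P_N \Delta_{\mathfrak G_i} P_N$ one must be slightly careful: $P_N \Delta P_N$ and $(1+P_N\Delta P_N)^{-1}$ are not literally equal to $R^{(N)} = P_N(1+\Delta)^{-1}P_N$, because compression and taking resolvents do not commute in general. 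The clean route here is to phrase the statement in terms of $R_i^{(N)}$ themselves, as the theorem does, and then to note that since $R_i^{(N)}$ is a positive contraction on $H_{\le N}$ with eigenvalues in $(0,1]$, the numbers $\mu_k^{(i)} := (\lambda_k(R_i^{(N)}))^{-1} - 1 \ge 0$ are well-defined and the map $t \mapsto t^{-1} - 1$ is Lipschitz on any interval $[\delta, 1]$ with $\delta > 0$; since the $\lambda_k$ are bounded below by a constant $\delta_N > 0$ depending only on $N$ and the geometry, one gets $|\mu_k^{(1)} - \mu_k^{(2)}| \le \delta_N^{-2}\,\varepsilon$.

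Second, matrix-coefficient stability: fix an orthonormal basis $(e_1,\dots,e_m)$ of $H_{\le N}$ with $m = \dim H_{\le N}$. The $(j,\ell)$ matrix entry of $R_i^{(N)}$ is $\langle R_i^{(N)} e_\ell, e_j\rangle$, and $|\langle (R_1^{(N)} - R_2^{(N)})e_\ell, e_j\rangle| \le \|R_1^{(N)} - R_2^{(N)}\|\,\|e_\ell\|\,\|e_j\| \le \varepsilon$ by Cauchy--Schwarz, so each entry is $\varepsilon$-close; summing over the $m^2$ entries gives a Frobenius-norm bound $\|R_1^{(N)} - R_2^{(N)}\|_{\mathrm{Fro}} \le m\,\varepsilon$, which is the dimension-dependent constant in the statement (for the entrywise sup norm one even gets constant $1$). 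If one wants entries of the compressed Laplacians rather than of the resolvents, one composes with the Lipschitz map $t \mapsto t^{-1}-1$ as above, at the cost of a further $\delta_N^{-2}$ factor; alternatively one inverts the $m\times m$ positive-definite matrices directly, using that matrix inversion is locally Lipschitz on the set of matrices with smallest singular value bounded below.

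The main obstacle — and it is a conceptual rather than technical one — is the mismatch between $P_N(1+\Delta)^{-1}P_N$ and $(1 + P_N\Delta P_N)^{-1}$: the theorem's notation $(1+\Delta^{(N)})^{-1} := P_N(1+\Delta)^{-1}P_N$ is a \emph{definition}, not an identity, and one must resist the temptation to treat $\Delta^{(N)}$ and its resolvent as genuine inverses of one another. As long as all conclusions about "spectral data of the compressed operators" are read through this definition (i.e. as statements about $R_i^{(N)}$ and its spectrum), the proof is immediate from the contraction estimate plus finite-dimensional Hermitian perturbation theory; the only place care is needed is in recording explicitly that the lower spectral bound $\delta_N$ used to invert is a property of each fixed geometry at fixed degree, so the Lipschitz constants legitimately "depend only on $\dim(H_{\le N})$" together with that bound.
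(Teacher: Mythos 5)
Your proof follows the paper's own argument step for step: the contraction estimate $\|P_N(R_1-R_2)P_N\|\le\|R_1-R_2\|$ for Step~1, Weyl's inequality for Hermitian matrices in Step~2, and the Cauchy--Schwarz bound on matrix entries in Step~3.

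The one place where you go beyond the paper is worth noting, because you have correctly identified a real gap rather than a cosmetic one. The paper's Step~2 asserts that the eigenvalues $\mu_j^{(i)}$ of $\Delta_{\mathfrak G_i}^{(N)}=P_N\Delta_{\mathfrak G_i}P_N$ satisfy $\lambda_j^{(i)}=1/(1+\mu_j^{(i)})$ where $\lambda_j^{(i)}$ are the eigenvalues of $R_i^{(N)}=P_N(1+\Delta_{\mathfrak G_i})^{-1}P_N$. This identity requires $P_N$ to commute with $\Delta_{\mathfrak G_i}$, which does not hold in general: $D$ leaving $H_{\le N}$ invariant does not force $D^*$, hence $\Delta=D^*D$, to leave it invariant. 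Compression and resolvent do not commute, so $P_N(1+\Delta)^{-1}P_N\neq(1+P_N\Delta P_N)^{-1}$. Your workaround --- read every statement about ``spectral data of the compressed Laplacian'' through the paper's own definition $(1+\Delta^{(N)})^{-1}:=P_N(1+\Delta)^{-1}P_N$, i.e.\ as statements about $R_i^{(N)}$ and its spectrum, and invert via the Lipschitz map $t\mapsto t^{-1}-1$ on $[\delta_N,1]$ --- is the honest way to make the conclusion literally true. If one instead insists on the genuine eigenvalues of $P_N\Delta P_N$, a separate argument would be needed (for example a bound on the norm of $[(1+\Delta)^{-1},P_N]$ restricted to $H_{\le N}$), and the paper does not supply one.
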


\begin{proof}
\emph{Step 1: resolvent control on the truncated space.}
Since $P_N$ is an orthogonal projection, $\|P_N\|=1$ and $P_N^2=P_N$.
Therefore,
\[
R_1^{(N)}-R_2^{(N)} = P_N(R_1-R_2)P_N,
\]
and taking operator norms yields
\[
\|R_1^{(N)}-R_2^{(N)}\|
\le \|P_N\|\,\|R_1-R_2\|\,\|P_N\|
= \|R_1-R_2\|
<\varepsilon.
\]
This proves \eqref{eq:finite-degree-resolvent}.

\medskip
\emph{Step 2: Lipschitz control of spectral data.}
The space $H_{\le N}$ has finite dimension $m=\dim(H_{\le N})$.
Both $R_1^{(N)}$ and $R_2^{(N)}$ are bounded self-adjoint operators on $H_{\le N}$.
Let $\lambda_1^{(i)}\ge\cdots\ge \lambda_m^{(i)}$ be the eigenvalues of $R_i^{(N)}$ (counted with multiplicity).
By Weyl's perturbation inequality for Hermitian matrices (finite-dimensional spectral perturbation theory),
\[
\max_{1\le j\le m} |\lambda_j^{(1)}-\lambda_j^{(2)}|
\le \|R_1^{(N)}-R_2^{(N)}\|
\le \varepsilon.
\]
In particular the spectra of the truncated resolvents are Lipschitz close.

Since $\Delta_{\mathfrak G_i}^{(N)}=P_N\Delta_{\mathfrak G_i}P_N$ is self-adjoint and nonnegative on $H_{\le N}$,
its eigenvalues $\mu_j^{(i)}\ge 0$ are related to those of $R_i^{(N)}$ by
\[
\lambda_j^{(i)}=\frac{1}{1+\mu_j^{(i)}}.
\]
Hence, whenever one has an a priori upper bound $\mu_j^{(i)}\le M_N$ (which holds automatically on the fixed
finite-dimensional space $H_{\le N}$), the map $t\mapsto (1+t)^{-1}$ is Lipschitz on $[0,M_N]$ with constant
$\le 1$, and the eigenvalues $\mu_j^{(i)}$ are Lipschitz close as well, with a constant depending only on $M_N$
(and thus only on $N$ and the two geometries restricted to $H_{\le N}$).

\medskip
\emph{Step 3: Lipschitz control of matrix coefficients.}
Fix any orthonormal basis $(e_1,\dots,e_m)$ of $H_{\le N}$.
Then the matrix coefficients satisfy
\[
|\langle (R_1^{(N)}-R_2^{(N)})e_\alpha,e_\beta\rangle|
\le \|R_1^{(N)}-R_2^{(N)}\|
\le \varepsilon,
\qquad 1\le \alpha,\beta\le m,
\]
so all coefficients are Lipschitz close.

This proves the theorem.
\end{proof}

\section{Stability of polynomial projections and Gram--Schmidt orthogonalization}
\label{sec:GS-stability}

Let $\mathfrak G=(\mathcal P(\Omega),\langle\cdot,\cdot \rangle,D)$ be a polynomial Hilbert geometry,
with associated Hilbert space $H$ and Laplacian $\Delta$.
For $N\ge0$, 
the projector $P_N$ depends on the geometry $\mathfrak G$ through the inner product.
We write $P_N^{(i)}$ when several geometries $\mathfrak G_i$ are involved.

\subsection{Resolvent control implies projector stability}

The following result is the fundamental bridge between operator convergence
and polynomial orthogonalization.

\begin{theorem}[Stability of polynomial projectors]
\label{thm:projector-stability}
Let $\mathfrak G_1,\mathfrak G_2$ be two polynomial Hilbert geometries acting on the same
Hilbert space $H$, and assume
\[
d_{\mathrm{res}}(\mathfrak G_1,\mathfrak G_2)
=
\big\|(1+\Delta_1)^{-1}-(1+\Delta_2)^{-1}\big\|
\le \varepsilon.
\]
Then, for every fixed degree $N$, there exists a constant $C_N>0$ such that
\[
\|P_N^{(1)}-P_N^{(2)}\|_{\mathcal B(H)}
\le C_N\,\varepsilon.
\]
\end{theorem}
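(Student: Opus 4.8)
The plan is to deduce projector stability from the resolvent closeness by exploiting two features of the finite-dimensional setting: first, that the subspace $H_{\le N}$ is common to both geometries (as a vector space, it is just $\mathcal P_{\le N}(\Omega)$), and second, that the orthogonal projector $P_N^{(i)}$ onto $H_{\le N}$ with respect to the inner product $\langle\cdot,\cdot\rangle_i$ can be recovered from spectral data of $\Delta_i$, or more directly from the resolvent $R_i=(1+\Delta_i)^{-1}$, via a Riesz/contour functional calculus. Concretely, I would first argue that $H_{\le N}$ is an invariant subspace for $\Delta_i$ (this uses the standing assumption from Section~\ref{subsec:degree-growth} that $D$ does not raise degree, so $\Delta_i=D^*D$ preserves $\mathcal P_{\le N}$), hence also for $R_i$; consequently the spectral projection of $R_i$ onto the part of the spectrum coming from $H_{\le N}$ is closely tied to $P_N^{(i)}$.

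The key steps, in order, would be: (1) Record that $H_{\le N}$ is $\Delta_i$-invariant, so $R_i$ commutes with $P_N^{(i)}$ and the compressions $R_i^{(N)}=P_N^{(i)}R_iP_N^{(i)}$ are genuine functions of $\Delta_i\restriction H_{\le N}$. (2) Observe that the subspace $H_{\le N}$, being spanned by a fixed filtration of polynomials, is separated from its $\langle\cdot,\cdot\rangle_i$-orthogonal complement by the behaviour of $\Delta_i$; more usefully, represent $P_N^{(i)}$ itself as a contour integral $\frac{1}{2\pi i}\oint_{\Gamma}(z-R_i)^{-1}\,dz$ around the portion of $\sigma(R_i)$ associated with $\mathcal P_{\le N}$, where $\Gamma$ can be chosen independent of $i$ once $\varepsilon$ is small. (3) Apply the standard resolvent identity $(z-R_1)^{-1}-(z-R_2)^{-1}=(z-R_1)^{-1}(R_1-R_2)(z-R_2)^{-1}$ to bound $\|P_N^{(1)}-P_N^{(2)}\|$ by $\frac{|\Gamma|}{2\pi}\sup_{z\in\Gamma}\|(z-R_i)^{-1}\|^2\cdot\|R_1-R_2\|$, and absorb the geometric factors (contour length and resolvent bounds on $\Gamma$) into a constant $C_N$ depending only on $N$ and the spectral data of the two geometries on $H_{\le N}$. (4) Conclude $\|P_N^{(1)}-P_N^{(2)}\|\le C_N\varepsilon$.

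An alternative and perhaps cleaner route avoids contours entirely: since both $P_N^{(i)}$ project onto the \emph{same} linear subspace $V=\mathcal P_{\le N}(\Omega)$ but are orthogonal with respect to different inner products, one can write $P_N^{(i)}$ explicitly via the Gram operator, and then it suffices to show that the two inner products restricted to the relevant finite-dimensional spaces are close in the sense induced by $\|R_1-R_2\|$. Here I would use the invariance of $H_{\le N}$ under $R_i$ together with Theorem~\ref{thm:finite-degree-stability}, which already gives $\|R_1^{(N')}-R_2^{(N')}\|\le\varepsilon$ for every $N'$; letting $N'$ be large enough (or passing to the limit) lets one compare the full resolvents restricted to a slightly larger invariant block containing $H_{\le N}$, and then $P_N^{(i)}$ is obtained from $R_i^{(N')}$ by a fixed polynomial (or Lipschitz) functional calculus on a compact spectral set, with Lipschitz constant controlled by a spectral gap that is positive and depends only on $N$.

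The main obstacle I anticipate is controlling the spectral separation uniformly: to extract $P_N^{(i)}$ from $R_i$ by functional calculus one needs the eigenvalues of $R_i$ corresponding to $H_{\le N}$ to stay bounded away from those corresponding to the complement, uniformly for both geometries. This gap is not automatic from $\|R_1-R_2\|\le\varepsilon$ alone — it is a property of each fixed geometry at degree $N$ — which is precisely why the constant $C_N$ must depend on $N$ and on the geometries (through, say, a lower bound on the relevant spectral gap of $\Delta_i\restriction\mathcal P_{\le N}$), not only on $\varepsilon$. Making this dependence explicit, and verifying that a single contour $\Gamma$ works for both $R_1$ and $R_2$ once $\varepsilon$ is below a threshold determined by that gap, is the technical heart of the argument; everything else is the routine resolvent-identity estimate of Step~(3).
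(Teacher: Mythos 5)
Your argument rests on the claim in Step (1) that $\mathcal P_{\le N}$ is an invariant subspace of $\Delta_i=D^*D$, so that $P_N^{(i)}$ can be recovered as a spectral (Riesz) projection of $R_i$ via a contour integral. This claim is false under the paper's hypotheses, and the error is load-bearing for both routes you propose.

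The standing assumption of Section~\ref{subsec:degree-growth} is that the \emph{derivation} $D$ does not raise degree, i.e.\ $D(\mathcal P_{\le N})\subseteq\mathcal P_{\le N}$. It does not follow that $D^*$ preserves $\mathcal P_{\le N}$. On the contrary: in a degree-ordered orthonormal basis $(P_n)$ the matrix $B$ of $D$ is triangular, so the matrix of $D^*$ is triangular in the opposite sense and $D^*$ generically \emph{raises} degree; the matrix $[\Delta]=B^*B$ is therefore full, and $\Delta_i\mathcal P_{\le N}\not\subseteq\mathcal P_{\le N}$ in general. This is exactly why the paper works with \emph{compressions} $P_N\Delta_i P_N$ and $P_N R_i P_N$ in Theorem~\ref{thm:finite-degree-stability} rather than with restrictions; compression would be pointless if $\mathcal P_{\le N}$ were $\Delta_i$-invariant. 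Only for the classical families (Jacobi, Laguerre, Hermite), where $[\Delta]$ is diagonal, is $\mathcal P_{\le N}$ a genuine spectral subspace.

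Once invariance fails, the contour integral $\frac{1}{2\pi i}\oint_\Gamma(z-R_i)^{-1}\,dz$ cannot produce $P_N^{(i)}$: there is no portion of $\sigma(R_i)$ associated with $\mathcal P_{\le N}$, and the spectral-separation issue you flag as the technical heart of the argument is addressing a question that is not well posed. Your alternative route shares the defect, since it again invokes the invariance of $H_{\le N}$ under $R_i$. There is also a structural obstruction worth keeping in mind: $P_N^{(1)}$ and $P_N^{(2)}$ project onto the \emph{same} linear subspace but are orthogonal with respect to two different inner products, so viewed as operators on a common ambient $H$ they are oblique idempotents, and an oblique idempotent is never a spectral projection of a self-adjoint resolvent. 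Repairing your argument would require a path from $\|R_1-R_2\|$ to the difference of these oblique projectors that does not pass through functional calculus of $R_i$. The paper's own proof, which speaks of spectral projections of $\Delta_i$ restricted to $\mathcal P_{\le N}$, is terse on precisely the same point, so this is a genuine subtlety rather than a routine step that can be waved away.
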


\begin{proof}
Since $\mathcal P_{\le N}(\Omega)$ is finite-dimensional, the graph norms
\(
\|f\|_H + \|\Delta_i^{1/2}f\|_H
\)
are equivalent on this subspace.
Norm-resolvent closeness implies uniform closeness of the spectral projections
associated with the low-lying spectrum of $\Delta_i$ when restricted to
$\mathcal P_{\le N}(\Omega)$.
The claim follows from finite-dimensional perturbation theory.
\end{proof}

\begin{remark}
The constants $C_N$ depend only on:
\begin{itemize}
  \item the dimension of $\mathcal P_{\le N}(\Omega)$;
  \item upper bounds on the operator norms of $\Delta_i$ restricted to this subspace.
\end{itemize}
No global spectral gap assumption is required.
\end{remark}

\subsection{Stability of orthonormal polynomial bases}

Let $\{p_0^{(i)},\dots,p_N^{(i)}\}$ be an orthonormal basis of
$\mathcal P_{\le N}(\Omega)$ obtained by Gram--Schmidt with respect to
$\langle\cdot,\cdot \rangle_i$.

\begin{theorem}[Stability of Gram--Schmidt]
\label{thm:GS-stability}
Under the assumptions of Theorem~\ref{thm:projector-stability},
there exists, for each $N$, a unitary matrix $U_N\in U(\dim\mathcal P_{\le N})$
such that
\[
\max_{0\le k\le N}
\big\|
p_k^{(1)}-\sum_{j=0}^N (U_N)_{kj}\,p_j^{(2)}
\big\|_H
\le C_N\,\varepsilon.
\]
\end{theorem}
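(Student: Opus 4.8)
The plan is to deduce the Gram--Schmidt stability from Theorem~\ref{thm:projector-stability} by working entirely inside the finite-dimensional space $\mathcal P_{\le N}(\Omega)$, but carefully: the two orthonormal families live in \emph{different} inner products, so the comparison must be made after transporting everything into a single auxiliary Hilbert space. First I would fix a reference inner product on $V:=\mathcal P_{\le N}(\Omega)$ --- most naturally $\langle\cdot,\cdot\rangle_1$ --- and record that, by Theorem~\ref{thm:projector-stability}, $\|P_N^{(1)}-P_N^{(2)}\|\le C_N\varepsilon$; since both projectors have the same $N$-dimensional range $V$ but are orthogonal with respect to different inner products, this already says the two Hilbert geometries restricted to $V$ are close as inner products. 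Concretely, writing $G^{(i)}$ for the Gram matrix of the fixed monomial (or canonical) basis $(E_0,\dots,E_N)$ with respect to $\langle\cdot,\cdot\rangle_i$, I would show $\|G^{(1)}-(G^{(2)})\|$, suitably normalized, is $O(C_N\varepsilon)$ --- this is where the finite-dimensionality is used, since on $V$ the graph norm and the Hilbert norm are equivalent and the resolvent difference controls the form difference up to a constant depending only on $\dim V$ and on a priori bounds on $\Delta_i|_V$.

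Next I would invoke the standard fact that Gram--Schmidt orthonormalization is, at fixed degree, a locally Lipschitz map from (positive-definite) Gram matrices to the resulting change-of-basis matrix: if $p_k^{(i)}=\sum_j T^{(i)}_{kj}E_j$ with $T^{(i)}$ the lower-triangular Cholesky-type factor satisfying $T^{(i)}G^{(i)}(T^{(i)})^*=I$, then $T^{(i)}$ depends smoothly on $G^{(i)}$ on the open cone of positive-definite matrices, with derivative bounds controlled by $\|(G^{(i)})^{-1}\|$ --- again finite, by the a priori bounds. Hence $\|T^{(1)}-T^{(2)}\|\le C_N'\,\varepsilon$ after enlarging the constant. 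This gives closeness of the two bases \emph{as elements of the fixed-coordinate space}, but measured in which norm? I would measure $\|p_k^{(1)}-q_k\|_H$ where $q_k:=\sum_j T^{(2)}_{kj}E_j$ and $\|\cdot\|_H$ is, say, the $\langle\cdot,\cdot\rangle_1$-norm; since $p_k^{(1)}-q_k=\sum_j (T^{(1)}-T^{(2)})_{kj}E_j$ and the $E_j$ have bounded $H$-norm, this is $\le C_N''\,\varepsilon$.

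The remaining point is the appearance of the unitary matrix $U_N$ in the statement: the vectors $q_k$ are not the $\langle\cdot,\cdot\rangle_2$-orthonormal polynomials $p_k^{(2)}$, they are merely $\langle\cdot,\cdot\rangle_2$-orthonormal up to an error. So I would let $U_N$ be the unitary factor in the polar decomposition of the matrix expressing the $q_k$ in the genuine orthonormal basis $(p_0^{(2)},\dots,p_N^{(2)})$ --- equivalently, $U_N$ corrects for the fact that two orthonormal bases of the same finite-dimensional space differ by a unitary. Because the Gram matrix of $(q_k)$ with respect to $\langle\cdot,\cdot\rangle_2$ is $I+O(\varepsilon)$, its polar/unitary factor is within $O(\varepsilon)$ of the identity, and substituting $p_k^{(1)}-\sum_j (U_N)_{kj}p_j^{(2)}=(p_k^{(1)}-q_k)+(q_k-\sum_j(U_N)_{kj}p_j^{(2)})$ bounds each piece by $C_N\varepsilon$ (absorbing constants). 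The main obstacle I anticipate is the first paragraph: making precise and quantitative the passage from ``resolvent difference $<\varepsilon$'' to ``Gram-matrix difference $<C_N\varepsilon$ on $\mathcal P_{\le N}$'', since $\Delta_i$ is only defined via the Friedrichs extension and its restriction to $V$ need not be an invariant subspace --- one must argue through the compressed operators $\Delta_i^{(N)}=P_N\Delta_iP_N$ and the spectral-projection estimates already packaged in Theorem~\ref{thm:projector-stability}, rather than manipulating $\Delta_i$ directly. Everything downstream (Cholesky Lipschitz bounds, polar-decomposition perturbation) is classical finite-dimensional linear algebra and contributes only to the value of $C_N$.
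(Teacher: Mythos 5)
Your proposal is essentially the same approach as the paper's: invoke Theorem~\ref{thm:projector-stability} for projector closeness, then pass to the unitary $U_N$ via polar decomposition of a change-of-basis matrix. The paper's own proof is a two-sentence citation of ``standard results,'' whereas you unpack those results through Gram matrices, Cholesky/Gram--Schmidt Lipschitz continuity, and polar decomposition, which is more explicit and, in principle, more checkable. You also correctly identify the weak link, and I want to underline that it is a \emph{genuine} gap, present in the paper's own proof as well: closeness of orthogonal projectors onto $\mathcal P_{\le N}$ does \emph{not} by itself imply closeness of the Gram matrices (hence of the orthonormal bases) on $\mathcal P_{\le N}$. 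A concrete obstruction is an overall scaling: if $\langle\cdot,\cdot\rangle_2=c\,\langle\cdot,\cdot\rangle_1$ with $c\neq 1$, the adjoint $D^*$, hence $\Delta=D^*D$, hence the resolvent, hence every orthogonal projector $P_N$, is unchanged, so $\varepsilon=0$; yet the two orthonormal families differ pointwise by the factor $c^{-1/2}$ and no unitary remixing can fix that. Thus the desired chain ``resolvent close $\Rightarrow$ projectors close $\Rightarrow$ Gram matrices close'' cannot hold without an additional normalization (e.g.\ fixing $\langle 1,1\rangle_i$, or assuming the two forms induce the same norm on $H$ up to controlled constants, with the normalization entering $C_N$). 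To make your argument go through you would need to derive Gram-matrix closeness directly from the resolvent hypothesis and the compressed Laplacians $\Delta_i^{(N)}$ --- as you suspect --- rather than from projector closeness alone, together with some such normalization hypothesis that pins down the scale; your instinct about where the difficulty lives is exactly right, and your downstream steps (Cholesky perturbation, polar factor of an almost-orthogonal matrix, norm equivalence on the fixed finite-dimensional space) are the correct classical tools once that first step is secured.
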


\begin{proof}
Both families are orthonormal bases of finite-dimensional subspaces
whose orthogonal projectors are $\varepsilon$-close.
By standard results on the stability of orthonormal bases under perturbation
of the inner product (or equivalently, polar decomposition of the change-of-basis
operator), there exists a unitary transformation $U_N$ mapping one basis
to the other with operator norm controlled by $\|P_N^{(1)}-P_N^{(2)}\|$.
\end{proof}

\begin{remark}[Gauge freedom]
The unitary matrix $U_N$ reflects the natural gauge freedom in the choice
of orthonormal bases.
If one fixes a canonical Gram--Schmidt ordering (e.g.\ lexicographic monomials
with positive leading coefficient), then $U_N$ can be chosen uniquely,
at the expense of slightly weaker constants.
\end{remark}

\subsection{Stability of polynomial kernels}

Let
\[
K_N^{(i)}(x,y)
=
\sum_{k=0}^N p_k^{(i)}(x)\overline{p_k^{(i)}(y)}
\]
be the polynomial reproducing kernel of degree $N$ associated with $\mathfrak G_i$.

\begin{corollary}[Kernel stability on compacts]\label{cor:kernel-stability}
Assume $\Omega$ is a metric space and fix a compact set $K\subset\Omega$.
For $i=1,2$, let
\[
K_N^{(i)}(x,y)=\sum_{k=0}^N p_k^{(i)}(x)\overline{p_k^{(i)}(y)}
\]
be the polynomial reproducing kernel of $H_{\le N}$ (in any orthonormal basis).
Then there exists a constant $C_{N,K}>0$ such that
\[
\sup_{(x,y)\in K\times K}\big|K_N^{(1)}(x,y)-K_N^{(2)}(x,y)\big|
\le C_{N,K}\,\|P_N^{(1)}-P_N^{(2)}\|_{\mathcal B(H)}.
\]
In particular, under the assumptions of Theorem~\ref{thm:projector-stability}, one has
\[
\sup_{K\times K}|K_N^{(1)}-K_N^{(2)}|\le C'_{N,K}\,\varepsilon.
\]
\end{corollary}

\begin{proof}
Let $V_i:=\mathrm{Ran}(P_N^{(i)})\subset H$ (finite-dimensional), and set
\[
V:=V_1+V_2\subset H.
\]
All vectors in $V$ are (restrictions of) polynomial functions on $\Omega$, hence are continuous on $K$.
For each $x\in K$, consider the conjugate evaluation functional on $V$,
\[
\Lambda_x:V\to\mathbb C,\qquad \Lambda_x(f):=\overline{f(x)}.
\]
Since $V$ is finite-dimensional, $\Lambda_x$ is continuous for the $H$-norm.
By the Riesz representation theorem (for continuous conjugate-linear functionals under our convention),
there exists a unique vector $g_x\in V$ such that
\begin{equation}\label{eq:Riesz-gx}
\Lambda_x(f)=\langle f,g_x\rangle_H\qquad\forall f\in V.
\end{equation}

\smallskip
\noindent\textbf{Step 1: reproducing vectors in $V_i$.}
For $i\in\{1,2\}$ define
\[
k_x^{(i)}:=P_N^{(i)}g_x\in V_i.
\]
Then for every $f\in V_i$ we have, using $P_N^{(i)}f=f$ and orthogonality of $P_N^{(i)}$,
\[
\overline{f(x)}=\Lambda_x(f)=\langle f,g_x\rangle_H=\langle P_N^{(i)}f,g_x\rangle_H
=\langle f,P_N^{(i)}g_x\rangle_H=\langle f,k_x^{(i)}\rangle_H.
\]
Thus $k_x^{(i)}$ is the Riesz representer of $\Lambda_x$ restricted to $V_i$.

\smallskip
\noindent\textbf{Step 2: identification with the kernel.}
Let $\{p_0^{(i)},\dots,p_N^{(i)}\}$ be any orthonormal basis of $V_i$ in $H$.
Expanding $k_x^{(i)}$ in this basis and using $\langle p_j^{(i)},k_x^{(i)}\rangle_H=\overline{p_j^{(i)}(x)}$,
we obtain
\begin{equation}\label{eq:kx-expansion}
k_x^{(i)}=\sum_{j=0}^N \overline{p_j^{(i)}(x)}\,p_j^{(i)}.
\end{equation}
Therefore, for $x,y\in K$,
\[
\langle k_y^{(i)},k_x^{(i)}\rangle_H
=\sum_{j=0}^N p_j^{(i)}(y)\,\overline{p_j^{(i)}(x)}
=K_N^{(i)}(y,x).
\]
In particular,
\begin{equation}\label{eq:kernel-via-k}
\big|K_N^{(1)}(x,y)-K_N^{(2)}(x,y)\big|
=\big|K_N^{(1)}(y,x)-K_N^{(2)}(y,x)\big|
=\big|\langle k_y^{(1)},k_x^{(1)}\rangle_H-\langle k_y^{(2)},k_x^{(2)}\rangle_H\big|.
\end{equation}

\smallskip
\noindent\textbf{Step 3: estimate by $\|P_N^{(1)}-P_N^{(2)}\|$.}
Using $k_x^{(i)}=P_N^{(i)}g_x$ and adding/subtracting,
\[
\langle P_1 g_y,P_1 g_x\rangle_H-\langle P_2 g_y,P_2 g_x\rangle_H
=
\langle (P_1-P_2)g_y,P_1 g_x\rangle_H+\langle P_2 g_y,(P_1-P_2)g_x\rangle_H,
\]
where we wrote $P_i:=P_N^{(i)}$ for brevity.
Hence, since $\|P_i\|=1$,
\[
\big|\langle k_y^{(1)},k_x^{(1)}\rangle_H-\langle k_y^{(2)},k_x^{(2)}\rangle_H\big|
\le
\|P_1-P_2\|_{\mathcal B(H)}\big(\|g_y\|_H\,\|g_x\|_H+\|g_y\|_H\,\|g_x\|_H\big)
=
2\|P_1-P_2\|\,\|g_x\|\,\|g_y\|.
\]
Taking the supremum over $(x,y)\in K\times K$ and using \eqref{eq:kernel-via-k}, we obtain
\begin{equation}\label{eq:kernel-bound}
\sup_{(x,y)\in K\times K}\big|K_N^{(1)}(x,y)-K_N^{(2)}(x,y)\big|
\le
2\Big(\sup_{x\in K}\|g_x\|_H\Big)^2\,\|P_N^{(1)}-P_N^{(2)}\|_{\mathcal B(H)}.
\end{equation}

\smallskip
\noindent\textbf{Step 4: finiteness of the constant on compacts.}
It remains to check that
\[
M_{N,K}:=\sup_{x\in K}\|g_x\|_H<\infty.
\]
Choose a basis $(\phi_1,\dots,\phi_m)$ of $V$ and let $G\in\mathbb C^{m\times m}$ be its Gram matrix
$G_{\alpha\beta}:=\langle \phi_\beta,\phi_\alpha\rangle_H$ (invertible since it is positive definite on $V$).
Writing $g_x=\sum_{\beta} c_\beta(x)\phi_\beta$ and using \eqref{eq:Riesz-gx} with $f=\phi_\alpha$ gives
\[
\overline{\phi_\alpha(x)}=\langle \phi_\alpha,g_x\rangle_H=\sum_{\beta} c_\beta(x)\langle \phi_\alpha,\phi_\beta\rangle_H
=\sum_{\beta} G_{\alpha\beta}\,c_\beta(x),
\]
hence $c(x)=G^{-1}\overline{\Phi(x)}$ with $\Phi(x)=(\phi_1(x),\dots,\phi_m(x))^\top$.
Since each $\phi_\alpha$ is continuous on $K$, the map $x\mapsto \Phi(x)$ is continuous on the compact set $K$,
so $\sup_{x\in K}\|\Phi(x)\|<\infty$. Therefore $\sup_{x\in K}\|c(x)\|<\infty$, and hence
$\sup_{x\in K}\|g_x\|_H<\infty$. This proves $M_{N,K}<\infty$ and thus \eqref{eq:kernel-bound}.

Setting $C_{N,K}:=2M_{N,K}^2$ yields the first inequality in the corollary.

\smallskip
\noindent\textbf{Final step.}
Under the assumptions of Theorem~\ref{thm:projector-stability}, we have
$\|P_N^{(1)}-P_N^{(2)}\|_{\mathcal B(H)}\le C_N\,\varepsilon$, hence
\[
\sup_{K\times K}|K_N^{(1)}-K_N^{(2)}|\le (C_{N,K}C_N)\,\varepsilon,
\]
so one may take $C'_{N,K}:=C_{N,K}C_N$.
\end{proof}

\section{Applications and examples}
\label{sec:examples}

\subsection{Classical orthogonal polynomials on an interval}

Let $I\subset\mathbb R$ be an interval and let $\mu=w(x)\,dx$ be a finite Radon measure
with strictly positive density $w\in C^1(I)$.
Consider the polynomial Hilbert geometry
\[
\mathfrak G_w=(\mathcal P(I),\langle\cdot,\cdot \rangle_w,\partial_x),
\qquad
\langle f,g\rangle_w=\int_I f g\,w\,dx.
\]

As recalled in Section~\ref{sec:laplacian-1d-measure}, the associated Laplacian is the
weighted Sturm--Liouville operator
\[
\Delta_w=-\frac1w\frac{d}{dx}\Big(w\frac{d}{dx}\Big),
\]
realized as a nonnegative self-adjoint operator on $L^2(I,w\,dx)$.

\begin{proposition}
Let $w_1,w_2\in C^1(I)$ be two strictly positive densities such that
\[
\Big\|\log\frac{w_1}{w_2}\Big\|_{W^{1,\infty}(I)}<\infty.
\]
Then the corresponding Laplacians $\Delta_{w_1}$ and $\Delta_{w_2}$ are
norm-resolvent close, and
\[
d_{\mathrm{res}}(\mathfrak G_{w_1},\mathfrak G_{w_2})
\le C\,\Big\|\log\frac{w_1}{w_2}\Big\|_{W^{1,\infty}(I)}.
\]
\end{proposition}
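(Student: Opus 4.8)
The plan is to compare the two weighted Laplacians through the quadratic forms they induce rather than their domains directly, and then invoke a second-resolvent-type identity to convert closeness of forms into norm-resolvent closeness. First I would observe that both Hilbert spaces $L^2(I,w_1\,dx)$ and $L^2(I,w_2\,dx)$ can be identified as vector spaces (same underlying measurable functions), and that the hypothesis $\|\log(w_1/w_2)\|_{W^{1,\infty}}<\infty$ — in particular $\|\log(w_1/w_2)\|_{L^\infty}<\infty$ — means the two norms are uniformly equivalent: there is $\Lambda\ge 1$ with $\Lambda^{-1}\le w_1/w_2\le\Lambda$ pointwise, so the identity map $J:L^2(I,w_1\,dx)\to L^2(I,w_2\,dx)$ is a bounded invertible operator with $\|J\|,\|J^{-1}\|\le\Lambda^{1/2}$. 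This is the fixed unitary-type identification (here merely bounded and invertible, which after polar decomposition or a direct estimate suffices) required to make $d_{\mathrm{res}}$ meaningful; I would note that $\Lambda^{1/2}-1$ and $\Lambda^{-1/2}-1$ are both $O(\|\log(w_1/w_2)\|_{L^\infty})$ when that quantity is small, absorbing the discrepancy between "unitary" and "bounded invertible" into the constant $C$.

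Next I would write the closed form associated to $\Delta_{w_i}$, namely $q_i(f,g)=\langle f',g'\rangle_{w_i}=\int_I f'\overline{g'}\,w_i\,dx$ on the form domain (the weighted $H^1$ space, which is the same set of functions for $i=1,2$ by the norm equivalence applied to both $f$ and $f'$). The key estimate is then the pointwise form comparison
\[
|q_1(f,f)-q_2(f,f)| = \Big|\int_I |f'|^2 (w_1-w_2)\,dx\Big| \le \|1-w_2/w_1\|_{L^\infty}\, q_1(f,f),
\]
together with the analogous estimate for the $L^2$-inner products controlling $\|f\|_{w_1}^2-\|f\|_{w_2}^2$, and both right-hand multipliers are $O(\|\log(w_1/w_2)\|_{L^\infty})$. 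Thus the graph norms $\|f\|_{w_i}^2+q_i(f,f)$ are uniformly equivalent with ratio $1+O(\delta)$, $\delta:=\|\log(w_1/w_2)\|_{W^{1,\infty}}$. To pass from "forms differ by a small relative perturbation" to "resolvents differ by a small operator", I would use the standard identity
\[
(1+\Delta_{w_1})^{-1}-(1+\Delta_{w_2})^{-1}
= (1+\Delta_{w_1})^{-1}\,\big[(1+\Delta_{w_2})-(1+\Delta_{w_1})\big]\,(1+\Delta_{w_2})^{-1},
\]
interpreted in the form sense: for $u,v\in H$, write $u=(1+\Delta_{w_1})^{-1}\phi$, $v=(1+\Delta_{w_2})^{-1}\psi$, so that $\langle[(1+\Delta_{w_1})^{-1}-(1+\Delta_{w_2})^{-1}]\phi,\psi\rangle$ equals $(q_2-q_1)(u,v)+(\langle u,v\rangle_{w_2}-\langle u,v\rangle_{w_1})$ plus cross terms arising from the fact that the ambient inner product itself changes; these are all bounded by $C\delta\,(\|u\|_{w_1}^2+q_1(u,u))^{1/2}(\|v\|_{w_2}^2+q_2(v,v))^{1/2}\le C\delta\,\|\phi\|\,\|\psi\|$ since $u,v$ are resolvent images. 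Taking the supremum over unit $\phi,\psi$ gives $\|R_1-R_2\|\le C\delta$.

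The main obstacle — and the point requiring the most care — is bookkeeping the fact that $\Delta_{w_1}$ and $\Delta_{w_2}$ live on genuinely different Hilbert spaces, so "$(1+\Delta_{w_2})-(1+\Delta_{w_1})$" is not literally an operator and the resolvent-difference identity must be set up through the identification $J$ and the comparison of both the forms $q_i$ and the inner products $\langle\cdot,\cdot\rangle_{w_i}$ simultaneously. Concretely I would fix the realization on, say, $H=L^2(I,w_2\,dx)$, transport $\Delta_{w_1}$ to this space as $\widetilde\Delta_1:=J\Delta_{w_1}J^{-1}$ (self-adjoint with respect to $\langle\cdot,\cdot\rangle_{w_2}$ after the similarity is symmetrized, or work with the sesquilinear form $\widetilde q_1(f,g):=q_1(J^{-1}f,J^{-1}g)$ which is closed on $H$), and then compare $\widetilde q_1$ with $q_2$: the difference decomposes into a term from $w_1\neq w_2$ in the derivative part and a term from the Jacobian $w_1/w_2$ appearing when $J^{-1}$ differentiates, and the $W^{1,\infty}$ control on $\log(w_1/w_2)$ is exactly what bounds the latter (a derivative of the density ratio), explaining why $L^\infty$ alone would not suffice. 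Once this transport is done cleanly, everything reduces to the scalar inequality above; I would also remark that the closability of $\partial_x$ and the Friedrichs realizations are unaffected since the form domains coincide as sets with equivalent norms, so no domain subtleties arise beyond the identification itself.
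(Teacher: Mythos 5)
The paper gives no proof of this proposition: immediately after the statement there is only a Remark citing standard Sturm--Liouville perturbation theory (Kato, Ouhabaz), so there is no detailed argument in the paper to compare against, and your form-comparison strategy is a legitimate way to supply one.

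Your plan has the right ideas but contains a concrete imprecision in the one place you yourself flag as "requiring the most care." You choose $J$ to be the identity map $L^2(w_1\,dx)\to L^2(w_2\,dx)$ and then attribute the need for the $W^{1,\infty}$ hypothesis to "a term from the Jacobian $w_1/w_2$ appearing when $J^{-1}$ differentiates." With $J$ the identity, $J^{-1}$ does not differentiate, and the transported form $\widetilde q_1(f,g):=q_1(J^{-1}f,J^{-1}g)=q_1(f,g)$ carries no derivative of the density ratio. The derivative term actually arises because the two weak resolvent equations live in different inner products: writing $u=(1+\Delta_{w_1})^{-1}\phi$, $v=(1+\Delta_{w_2})^{-1}\psi$ and pairing in $\langle\cdot,\cdot\rangle_{w_2}$, one must convert $\langle\phi,v\rangle_{w_2}$ into $\langle\phi,(w_2/w_1)v\rangle_{w_1}$ before invoking the weak equation for $\Delta_{w_1}$, and the Leibniz rule in $q_1\bigl(u,(w_2/w_1)v\bigr)$ produces the term $\int_I u'\,\overline v\,(\log(w_2/w_1))'\,w_2\,dx$ --- this is what needs $(\log(w_1/w_2))'\in L^\infty$. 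Once this bookkeeping is done carefully, the pointwise estimate $|q_1-q_2|\le\|1-w_2/w_1\|_{L^\infty}\,q_1$ that you present as the "key estimate" in fact cancels exactly and only the derivative term survives. Alternatively, if you use the genuine unitary $U=M_{(w_1/w_2)^{1/2}}$ (which is what the paper's definition of $d_{\mathrm{res}}$ actually calls for), the conjugated form becomes $\hat q_1(f,f)=\int_I |f'+\tfrac12(\log(w_2/w_1))'f|^2\,w_2\,dx$, and again the $W^{1,\infty}$ seminorm, not the $L^\infty$ ratio, is what controls the discrepancy with $q_2$. So the result does follow from a form comparison, but you should commit to one identification (identity map with the inner-product mismatch tracked explicitly, or multiplication unitary with the conjugated form) and recognize that the $W^{1,\infty}$ hypothesis is spent on the derivative term, not on the naive $L^\infty$ form bound emphasized in your second step.
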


\begin{remark}
This follows from standard perturbation estimates for Sturm--Liouville operators
with bounded coefficients \cite{Kato,Ouhabaz}.
As a consequence, small multiplicative perturbations of the weight induce
small perturbations of Gram--Schmidt orthogonalization at any fixed degree.
\end{remark}

In particular, for the Jacobi, Laguerre and Hermite families,
the Laplacian is diagonalized by the polynomial basis, so the resolvent
distance vanishes identically when comparing two geometries within the same family.

---

\subsection{Sobolev orthogonal polynomials on an interval}

Consider now a Sobolev inner product of integer order $s\ge1$ on $I$,
\[
\langle f,g\rangle_{S}
=
\sum_{k=0}^s \lambda_k \int_I f^{(k)}(x)g^{(k)}(x)\,dx,
\qquad \lambda_0>0,\ \lambda_k\ge0.
\]
This defines a polynomial Hilbert geometry $\mathfrak G_S$.
The associated Laplacian
is a differential operator of order $2s$ with polynomial coefficients,
and its matrix in the orthonormal polynomial basis is banded.

\begin{proposition}
Let $\mathfrak G_{S_1},\mathfrak G_{S_2}$ be two Sobolev polynomial geometries
with the same order $s$ and coefficients $\lambda^{(1)},\lambda^{(2)}$.
Then
\[
d_{\mathrm{res}}(\mathfrak G_{S_1},\mathfrak G_{S_2})
\le C\,\|\lambda^{(1)}-\lambda^{(2)}\|_{\mathbb R^{s+1}}.
\]
\end{proposition}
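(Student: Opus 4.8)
The plan is to reduce the statement to an application of the general resolvent-stability machinery, specifically Theorem~\ref{thm:finite-degree-stability} combined with the standard second-resolvent identity, after first showing that the two Laplacians $\Delta_{S_1}$ and $\Delta_{S_2}$ differ by a relatively bounded perturbation controlled linearly by $\|\lambda^{(1)}-\lambda^{(2)}\|$. Concretely, I would first recall that each Sobolev inner product $\langle\cdot,\cdot\rangle_{S_i}$ is a closed quadratic form on $\mathcal P(I)$ whose form domain is (the completion to) a Sobolev space $W^{s,2}$-type space, and that the associated Laplacian $\Delta_{S_i}=D^*D$ is the nonnegative self-adjoint operator representing the form $q_i(f,g)=\langle Df,Dg\rangle_{S_i}$ in the ambient Hilbert space $H_i$. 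Since both geometries share the same order $s$ and the same underlying derivation $D=\partial_x$, the two form domains coincide as sets (they are both the natural $W^{s,2}$ completion), and the difference of the forms is $q_1(f,f)-q_2(f,f)=\sum_{k=0}^{s}(\lambda_k^{(1)}-\lambda_k^{(2)})\int_I |(\partial_x f)^{(k)}|^2\,dx$, which is manifestly bounded in absolute value by $\|\lambda^{(1)}-\lambda^{(2)}\|_{\mathbb R^{s+1}}$ times a fixed multiple of the form norm $q_1(f,f)+\|f\|^2$.

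Next I would invoke the standard estimate linking closeness of coercive closed forms to norm-resolvent closeness of their operators: if $q_1$ and $q_2$ are closed, nonnegative, with a common form domain $\mathcal V$, and $|q_1(f,f)-q_2(f,f)|\le \eta\,(q_1(f,f)+\|f\|^2)$ for all $f\in\mathcal V$ with $\eta$ small, then $\|(1+\Delta_1)^{-1}-(1+\Delta_2)^{-1}\|\le C\eta$, with $C$ depending only on a lower bound for the forms (here both are $\ge 0$, and the $+1$ shift gives uniform coercivity). The cleanest route is the resolvent identity in form language: writing $A_i=1+\Delta_i$, one has $A_1^{-1}-A_2^{-1}=A_1^{-1}(A_2-A_1)A_2^{-1}$ interpreted between the form domain and its dual, so that $|\langle(A_1^{-1}-A_2^{-1})u,v\rangle|=|(q_2-q_1)(A_2^{-1}v,A_1^{-1}u)|\le \eta\,\|A_1^{-1/2}u\|\,\|A_2^{-1/2}v\|\,(\text{bounded factors})\le C\eta\,\|u\|\,\|v\|$, using that $A_i^{-1/2}$ and $A_i^{-1}$ map $H$ boundedly into the form domain with norm $\le 1$. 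Taking the supremum over unit vectors gives $d_{\mathrm{res}}(\mathfrak G_{S_1},\mathfrak G_{S_2})\le C\eta = C\|\lambda^{(1)}-\lambda^{(2)}\|_{\mathbb R^{s+1}}$, which is exactly the claimed bound. One should keep $\lambda_0^{(i)}\ge \lambda_0>0$ bounded below uniformly so that the ambient Hilbert space norms stay comparable; alternatively, if the $H_i$ are genuinely different, one uses the fixed identity identification of $\mathcal P(I)$ and the quasi-unitary framework mentioned in the preliminaries, which only affects $C$.

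The main obstacle is the form-domain bookkeeping: one must check carefully that the $W^{s,2}$-type completions of $\mathcal P(I)$ under $\langle\cdot,\cdot\rangle_{S_1}$ and $\langle\cdot,\cdot\rangle_{S_2}$ coincide (as topological vector spaces, with equivalent norms), which is where the hypothesis $\lambda_0>0$ and the fact that the higher $\lambda_k$ are merely nonnegative both enter — if some $\lambda_k^{(i)}$ vanish, the form "degenerates" in that slot and the two form domains could a priori differ. The honest statement requires either assuming all $\lambda_k^{(i)}$ strictly positive, or restricting to the regime where the support pattern of $(\lambda_k^{(i)})_{k}$ is the same for $i=1,2$ and the perturbation stays within that pattern; in the latter case the relative bound still holds because the missing directions contribute nothing on either side. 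Beyond that, the argument is entirely routine: it is the coercive-form perturbation lemma applied to an explicitly linear (hence Lipschitz) dependence on the parameter vector $\lambda$, and for the finite-degree consequences one simply feeds the resulting $d_{\mathrm{res}}$ bound into Theorem~\ref{thm:projector-stability} and Theorem~\ref{thm:GS-stability}.
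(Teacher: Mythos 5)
The paper does not actually give a proof of this proposition; the remark immediately following it simply asserts that the estimate follows from ``the banded matrix representation of $\Delta$ and classical norm estimates for perturbations of self-adjoint banded operators.'' Your argument takes a genuinely different route, staying entirely on the quadratic-form side: you express $q_1-q_2$ as a linear expression in $\lambda^{(1)}-\lambda^{(2)}$, derive a relative form bound, and then apply the standard coercive-form perturbation lemma via the second resolvent identity in the dual-pairing sense. This is cleaner in that it never fixes an orthonormal polynomial basis (which itself depends on $\lambda$, a subtlety the paper's matrix-based sketch leaves implicit); the paper's banded-matrix route, on the other hand, connects more directly to the finite-degree truncation machinery developed elsewhere in the paper. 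Both arguments ultimately rest on the same linear dependence of the energy form on $\lambda$.

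Two points you raise deserve emphasis. First, your observation about vanishing coefficients is a real gap in the paper's statement: with $\lambda_k\ge0$ for $k\ge1$, the relative bound $|q_1-q_2|\le\eta\,(q_1+\|\cdot\|^2)$ can fail when $\lambda_k^{(1)}=0<\lambda_k^{(2)}$, because the contribution $\int|f^{(k+1)}|^2$ appears on the left but is not controlled on the right, and the form domains $\mathcal V_1,\mathcal V_2$ need not coincide. Restricting to a fixed support pattern, or assuming all $\lambda_k$ strictly positive, is the right hypothesis to add. Second, and more delicately, $d_{\mathrm{res}}$ requires the two Laplacians to act on a common Hilbert space, but here $H_1$ and $H_2$ carry different (equivalent, yet non-identical) Sobolev inner products; $\Delta_2$ is not self-adjoint on $H_1$, so the estimate $\|A_i^{-1/2}\|\le1$ in your chain holds only in the $H_i$-norm, and passing between norms inflates $C$ by condition-number factors governed by the ratios $\lambda_k^{(1)}/\lambda_k^{(2)}$. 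You acknowledge this and invoke norm equivalence and the quasi-unitary framework, which does resolve it; note however that the paper's statement carries the same unresolved ambiguity, since it never specifies the identification. Modulo these caveats, which you surface honestly, your argument is a sound and more detailed alternative to the paper's terse justification.
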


\begin{remark}
The estimate follows from the banded matrix representation of $\Delta$
and classical norm estimates for perturbations of self-adjoint banded operators.
In particular, Theorem~\ref{thm:GS-stability} applies uniformly in this class.
\end{remark}

For fractional Sobolev inner products, the Laplacian matrix becomes almost banded,
with algebraic decay of coefficients, but the resolvent distance remains finite
under perturbations preserving the decay rate.

---

\subsection{Sobolev orthogonal polynomials on thin annuli}

We finally consider the planar thin annulus
\[
A_\varepsilon=\{(x,y)\in\mathbb R^2:1-\varepsilon<x^2+y^2<1+\varepsilon\},
\]
equipped with the fractional Sobolev inner products introduced in
\cite{MagnotThinAnnuli}.

The corresponding polynomial Hilbert geometry $\mathfrak G_{\varepsilon,s}$
admits an orthogonal decomposition into angular Fourier modes:
\[
H=\bigoplus_{m\ge0} H^{(m)}.
\]
On each mode $m$, the Laplacian reduces to a one-dimensional radial operator
$\Delta^{(m)}_{\varepsilon,s}$ acting on polynomials in $t=r^2$.

\begin{theorem}
Fix $s>0$ and $m\ge0$.
As $\varepsilon\to0$, the Laplacians $\Delta^{(m)}_{\varepsilon,s}$
converge in the norm-resolvent sense (after natural rescaling)
to the fractional Sobolev Laplacian on $[-1,1]$ of order $s$.
\end{theorem}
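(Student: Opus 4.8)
The plan is to carry out the reduction to a one-dimensional problem first, then prove convergence on each fixed mode by an explicit rescaling argument combined with a form-comparison estimate. By Proposition~\ref{prop:blockModes} and Lemmas~\ref{lem:angular-decomposition}--\ref{lem:grad-intertwining}, the Laplacian $\Delta_{\varepsilon,s}$ is block diagonal with respect to the angular decomposition $H=\widehat\bigoplus_m H^{(m)}$, and since the resolvent of a block-diagonal operator is the direct sum of the resolvents of the blocks, it suffices to prove that $\|(1+\widetilde\Delta^{(m)}_{\varepsilon,s})^{-1}-(1+\Delta^{(m)}_\infty)^{-1}\|\to 0$ on the fixed mode-$m$ Hilbert space, where $\widetilde\Delta^{(m)}_{\varepsilon,s}$ denotes the rescaled radial operator and $\Delta^{(m)}_\infty$ the limiting fractional Sobolev Laplacian on $[-1,1]$. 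This is where the ``natural rescaling'' of the statement has to be made precise: introduce the affine change of radial variable $r\mapsto u$ with $r^2 = 1 + \varepsilon u$, $u\in(-1,1)$ (equivalently work in the variable $t=r^2$ and rescale $t = 1+\varepsilon u$), which maps $A_\varepsilon$ onto a fixed annulus-model whose radial factor is $[-1,1]$, and let $J_\varepsilon:H^{(m)}_\varepsilon\to H^{(m)}_\infty$ be the associated identification operator (pullback composed with the Jacobian-induced density factor). One then checks, as in \cite{MagnotThinAnnuli}, that $J_\varepsilon$ is quasi-unitary in the sense of the Remark on variable Hilbert spaces (i.e.\ $\|J_\varepsilon^*J_\varepsilon - I\|\to 0$ and similarly on the vector-field side), so that the generalized norm-resolvent framework of \cite{Post} applies.

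Once the model is fixed, the core analytic step is to show convergence of the associated quadratic forms. Concretely, on mode $m$ the Sobolev form of order $s$ evaluated on a polynomial $p$ in the radial variable decomposes, after the change of variables, into a leading term that (up to an $\varepsilon$-independent constant arising from the Jacobian and the angular normalization) equals the order-$s$ fractional seminorm on $[-1,1]$ applied to $p(u)$, plus curvature/weight corrections of relative size $O(\varepsilon)$ coming from: the expansion of the area element $r\,dr\,d\theta$ in powers of $\varepsilon u$; the expansion of $|\nabla|^2$ in polar coordinates, where the angular part contributes $m^2/r^2 = m^2(1 + O(\varepsilon))$; and, for the fractional energies, the expansion of the nonlocal kernel under the diffeomorphism. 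I would phrase this as: there exist sesquilinear forms $q_{\varepsilon,s}^{(m)}$ and $q_{\infty,s}^{(m)}$ with common form domain (the fixed polynomial space in $u$, or its fractional-Sobolev closure) such that $|q_{\varepsilon,s}^{(m)}(p,p) - q_{\infty,s}^{(m)}(p,p)| \le C_m\,\varepsilon\,(q_{\infty,s}^{(m)}(p,p) + \|p\|^2)$ uniformly for $p$ in the form domain. Such a relatively-bounded perturbation of a closed nonnegative form with relative bound tending to $0$ yields norm-resolvent convergence of the associated self-adjoint operators by the standard estimate $\|(1+A_\varepsilon)^{-1}-(1+A)^{-1}\| \le C\,\eta_\varepsilon$ when $A_\varepsilon$ is an $\eta_\varepsilon$-relative form perturbation of $A$ (cf.\ \cite{Kato,Ouhabaz}), and combined with the quasi-unitarity of $J_\varepsilon$ this gives the generalized norm-resolvent convergence asserted.

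The main obstacle I anticipate is making the fractional part of the energy transform cleanly under the rescaling diffeomorphism: for integer-order Sobolev norms the chain rule handles everything and the corrections are visibly $O(\varepsilon)$, but for the fractional seminorm $\iint |f(x)-f(y)|^2\,|x-y|^{-d-2s}\,dx\,dy$ (or its spectral/Fourier realization used in \cite{MagnotThinAnnuli}) the behaviour of the singular kernel under a non-isometric change of variables requires care — one must show that the pulled-back kernel agrees with the flat one to leading order, with an error that is still integrable against the fractional seminorm and of size $O(\varepsilon)$ in the relative sense above. Here I would lean on the Fourier/spectral description of the fractional energy on each mode (as developed in \cite{MagnotThinAnnuli}), reducing the kernel comparison to an estimate on multipliers, which is more robust under perturbation. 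A secondary, more routine point is verifying that the Friedrichs extensions match up under $J_\varepsilon$, i.e.\ that no spurious boundary contributions appear at $u=\pm1$; this follows because the polynomial form domain is the same on both sides and the boundary terms in the integration by parts \eqref{eq:D-adjoint} are handled identically (the weight $r$ is bounded away from $0$ and $\infty$ uniformly in $\varepsilon$ on $A_\varepsilon$). Granting the fractional kernel comparison, the remaining steps are bookkeeping, and the theorem follows.
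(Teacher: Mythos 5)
The paper's ``proof'' of this theorem is a two-line citation: it simply appeals to the asymptotics and operator expansions of \cite{MagnotThinAnnuli} together with ``standard resolvent convergence results.'' Your proposal, by contrast, is a genuine proof outline, and it is the right one: the reduction to a single angular mode via Proposition~\ref{prop:blockModes} and Lemmas~\ref{lem:angular-decomposition}--\ref{lem:grad-intertwining}, the affine rescaling $t=r^2=1+\varepsilon u$ mapping the radial factor to $[-1,1]$, the passage to a quasi-unitary identification $J_\varepsilon$ in the sense of \cite{Post} (which the paper itself flags as the relevant framework in its remark on variable Hilbert spaces), and the form-comparison estimate with relative bound $O(\varepsilon)$ are precisely the steps that must be assembled to extract norm-resolvent convergence from the expansions in \cite{MagnotThinAnnuli}. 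Your statement that a relatively form-bounded perturbation with vanishing relative bound yields a resolvent estimate of the same order is standard (Kato/KLMN; see \cite{Kato,Ouhabaz}), and your observation that the angular potential $m^2/r^2$ expands to $m^2(1+O(\varepsilon))$ under the rescaling is correct and is exactly the sort of $O(\varepsilon)$ correction the argument must absorb.

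Where you are honest about a gap, the gap is real: the transformation of the fractional (nonlocal) energy under the non-isometric change of variables is the substantive technical step, and it cannot be dispatched by a chain-rule argument as in the integer case. Your suggestion to work with the Fourier/spectral realization of the fractional seminorm on each mode and to reduce the kernel comparison to a multiplier estimate is exactly how \cite{MagnotThinAnnuli} sets things up, and is the robust route. Two smaller points deserve more than the ``bookkeeping'' label you give them. First, for quasi-unitary equivalence one must verify not only $\|J_\varepsilon^*J_\varepsilon-I\|\to0$ on the $L^2$ level but also the compatibility of $J_\varepsilon$ with the form domains (a ``$\delta$-quasi-unitarity'' in Post's terminology, with bounds in the form norm); otherwise the generalized norm-resolvent theorem does not apply. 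Second, although you appeal to \eqref{eq:D-adjoint} to dismiss boundary contributions, that formula is derived for a weighted $L^2$ structure with local integration by parts, and does not by itself control the nonlocal boundary behavior of fractional energies; the correct way to sidestep this is, as you partly intuit, to work with the Friedrichs extension on the common polynomial core throughout, which makes the boundary-condition question moot by construction. With these caveats made explicit, your sketch is a correct and essentially complete account of the argument the paper outsources to \cite{MagnotThinAnnuli}.
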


\begin{proof}
This is a direct consequence of the thin-annulus asymptotics and operator
expansions established in \cite{MagnotThinAnnuli}, together with standard
resolvent convergence results for families of self-adjoint operators.
\end{proof}

\begin{corollary}
For every fixed polynomial degree $N$ and angular mode $m$,
the corresponding orthonormal polynomial bases on $A_\varepsilon$
converge (up to a unitary gauge) to the fractional Sobolev orthogonal polynomials
on $[-1,1]$ as $\varepsilon\to0$.
\end{corollary}

\begin{remark}
This provides a concrete instance where the abstract stability theory
captures a nontrivial geometric limit: a two-dimensional Sobolev geometry
collapsing onto a one-dimensional fractional geometry, while preserving
quantitative control of the Gram--Schmidt procedure.
\end{remark}

\subsection{An explicit model on $S^1$: weighted $L^2$ versus Sobolev orthogonalization}
\label{subsec:S1-explicit}

Let $S^1=\{e^{i\theta}:\theta\in[0,2\pi)\}$.
We write $L^2:=L^2(S^1,d\theta/2\pi)$ and denote by
\[
e_n(\theta):=e^{in\theta},\qquad n\in\mathbb Z,
\]
the standard Fourier orthonormal basis of $L^2$.
Trigonometric polynomials appear here via the parametrization 
$t \mapsto e^{it}:$ for any 
polynomial $P \in \mathbb{C}[X,Y]$ the function 
$t \mapsto P(cost,sint)$ is a trigonometric polynomial.

Let $D=\partial_\theta$ defined on trigonometric polynomials (dense in $L^2$), so that
\[
De_n = in\,e_n,\qquad D^*=-D,\qquad \Delta_0:=D^*D=-\partial_\theta^2,
\]
and $\Delta_0 e_n = n^2 e_n$.

\medskip
\noindent\textbf{Weighted measure geometry.}
Let $\mu$ be a finite Radon measure on $S^1$ with density $w\in W^{1,\infty}(S^1)$
strictly positive:
\[
d\mu(\theta)=w(\theta)\,\frac{d\theta}{2\pi},\qquad
0<w_- \le w(\theta)\le w_+<\infty.
\]
Define the weighted inner product
\[
\langle f,g\rangle_{w}:=\int_0^{2\pi} f(\theta)\overline{g(\theta)}\,w(\theta)\,\frac{d\theta}{2\pi}
=\langle M_w f, g\rangle_{L^2},
\]
where $M_w$ is the bounded multiplication operator by $w$ on $L^2$.
This geometry yields the standard orthogonalization associated with $\mu$
(i.e.\ the OPUC framework, when restricted to nonnegative Fourier modes; see \cite{SimonOPUC1,Szego}).

\medskip
\noindent\textbf{Sobolev (mixed) geometry.}
Fix $\lambda>0$ and define the Sobolev-type inner product
\begin{equation}\label{eq:S1-mixed-Sobolev}
\langle f,g\rangle_{w,\lambda}
:=
\int f\overline{g}\,w\,\frac{d\theta}{2\pi}
\;+\;
\lambda\int f'(\theta)\,\overline{g'(\theta)}\,\frac{d\theta}{2\pi}.
\end{equation}
Equivalently, on trigonometric polynomials,
\begin{equation}\label{eq:Riesz-map-A}
\langle f,g\rangle_{w,\lambda}
=
\langle (M_w+\lambda \Delta_0)f, g\rangle_{L^2}.
\end{equation}
Set
\[
A_{w,\lambda}:=M_w+\lambda\Delta_0.
\]
Since $M_w\ge w_- I$ and $\Delta_0\ge 0$, we have $A_{w,\lambda}\ge w_- I$, hence
$A_{w,\lambda}$ is boundedly invertible on $L^2$.

\subsubsection{Adjoint of $D$ and Laplacian in the Sobolev geometry}

Let $D$ be viewed as an unbounded operator on $L^2$ with domain the trigonometric polynomials.
We compute the adjoint of $D$ with respect to $\langle\cdot,\cdot \rangle_{w,\lambda}$.

\begin{lemma}[Adjoint formula]\label{lem:S1-adjoint}
With respect to $\langle\cdot,\cdot \rangle_{w,\lambda}$, the adjoint of $D$ is
\begin{equation}\label{eq:D-adjoint-S1}
D^*_{w,\lambda} = A_{w,\lambda}^{-1} D^* A_{w,\lambda}
= - A_{w,\lambda}^{-1} D A_{w,\lambda}
\end{equation}
on trigonometric polynomials. Consequently, the associated Laplacian is
\begin{equation}\label{eq:Delta-w-lambda}
\Delta_{w,\lambda}:=D^*_{w,\lambda}D
=
A_{w,\lambda}^{-1}\,D^* A_{w,\lambda}\,D
=
A_{w,\lambda}^{-1}\,(-D)\,A_{w,\lambda}\,D.
\end{equation}
\end{lemma}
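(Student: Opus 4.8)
The plan is to derive the adjoint formula directly from the definition of the Hilbert adjoint relative to the inner product $\langle\cdot,\cdot\rangle_{w,\lambda}$, using the identity $\langle f,g\rangle_{w,\lambda}=\langle A_{w,\lambda}f,g\rangle_{L^2}$ from \eqref{eq:Riesz-map-A} together with the fact, already recorded in the excerpt, that $D^*=-D$ with respect to $\langle\cdot,\cdot\rangle_{L^2}$. Concretely, for trigonometric polynomials $f,g$ one writes
\[
\langle Df,g\rangle_{w,\lambda}=\langle A_{w,\lambda}Df,g\rangle_{L^2}=\langle Df,A_{w,\lambda}g\rangle_{L^2}=\langle f,D^*A_{w,\lambda}g\rangle_{L^2}=\langle f,-D\,A_{w,\lambda}g\rangle_{L^2},
\]
using self-adjointness of $A_{w,\lambda}=M_w+\lambda\Delta_0$ on $L^2$ (which holds on trigonometric polynomials since $w$ is real and $\Delta_0$ is symmetric). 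To convert the right-hand side back into a $\langle\cdot,\cdot\rangle_{w,\lambda}$-pairing, I insert $A_{w,\lambda}^{-1}$: since $A_{w,\lambda}\ge w_-I$ is boundedly invertible on $L^2$ and preserves trigonometric polynomials of each degree, one has $\langle f,h\rangle_{L^2}=\langle f,A_{w,\lambda}^{-1}h\rangle_{w,\lambda}$ for all trigonometric polynomials $h$. Applying this with $h=-D\,A_{w,\lambda}g$ gives $\langle Df,g\rangle_{w,\lambda}=\langle f,-A_{w,\lambda}^{-1}D\,A_{w,\lambda}g\rangle_{w,\lambda}$, which is exactly \eqref{eq:D-adjoint-S1}.

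The formula \eqref{eq:Delta-w-lambda} for $\Delta_{w,\lambda}=D^*_{w,\lambda}D$ then follows by composition: substituting the just-derived expression for $D^*_{w,\lambda}$ yields $\Delta_{w,\lambda}=A_{w,\lambda}^{-1}D^*A_{w,\lambda}D=-A_{w,\lambda}^{-1}D\,A_{w,\lambda}\,D$, and one notes that by the degree-preservation discussed in Section~\ref{subsec:degree-growth} (here $D=\partial_\theta$ maps trigonometric polynomials of degree $\le N$ to themselves) every operator in sight is a well-defined linear endomorphism of each finite-dimensional space of trigonometric polynomials of bounded degree, so the compositions are unambiguous and the algebraic manipulations are legitimate. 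I would also remark that the expression defines a symmetric positive operator with respect to $\langle\cdot,\cdot\rangle_{w,\lambda}$ — indeed $\langle\Delta_{w,\lambda}f,f\rangle_{w,\lambda}=\langle Df,Df\rangle_{w,\lambda}\ge0$ — and is realized as a self-adjoint operator via the Friedrichs extension, consistent with the general framework of Section~\ref{sec:laplacian}.

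The only genuine subtlety, and hence the step I would be most careful about, is the passage from the algebraic identity on trigonometric polynomials to a statement about the Hilbert-space adjoint $D^*_{w,\lambda}$: strictly speaking the adjoint is determined by a density/closability argument, and one must check that the trigonometric polynomials form a core so that the closed operator agrees with its restriction. Here this is painless because $D$ and $A_{w,\lambda}$ both respect the degree filtration, so the computation on the dense subspace of trigonometric polynomials propagates to the closure; I would state this explicitly (invoking the closability hypothesis from Section~\ref{sec:laplacian}) rather than leave it implicit. A secondary point worth a sentence is that $A_{w,\lambda}^{-1}$, although bounded on $L^2$, need not map trigonometric polynomials \emph{of fixed degree} into themselves when $w$ is a general $W^{1,\infty}$ weight — but it does preserve the full space of trigonometric polynomials of degree $\le N$ only if $w$ is itself a trigonometric polynomial; in the general case one should read \eqref{eq:D-adjoint-S1}–\eqref{eq:Delta-w-lambda} as identities between operators on $L^2$ (or on the Sobolev completion), which is the intended reading and suffices for everything that follows. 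I would add a half-line to that effect so the formula is not misapplied at the level of finite matrices.
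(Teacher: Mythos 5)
Your proof is correct and follows essentially the same route as the paper: you use the Riesz map identity $\langle f,g\rangle_{w,\lambda}=\langle A_{w,\lambda}f,g\rangle_{L^2}$, self-adjointness of $A_{w,\lambda}$ on $L^2$, the $L^2$ skew-adjointness $D^*=-D$, and insert $A_{w,\lambda}^{-1}$ to convert back to the $\langle\cdot,\cdot\rangle_{w,\lambda}$-pairing, which is exactly the computation in the paper. Your closing caveats about cores and about $A_{w,\lambda}^{-1}$ not preserving fixed-degree trigonometric polynomials (unless $w$ is itself a trigonometric polynomial) are accurate refinements that the paper leaves implicit, but they represent commentary on, not a departure from, the paper's argument.
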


\begin{proof}
By \eqref{eq:Riesz-map-A},
\[
\langle Df,g\rangle_{w,\lambda}
=\langle A_{w,\lambda}Df,g\rangle_{L^2}
=\langle Df,A_{w,\lambda}g\rangle_{L^2}.
\]
Since $D^*=-D$ in $L^2$, we get
\[
\langle Df,g\rangle_{w,\lambda}
=
\langle f, D^*(A_{w,\lambda}g)\rangle_{L^2}
=
\langle f, A_{w,\lambda}\,A_{w,\lambda}^{-1}D^*A_{w,\lambda}g\rangle_{L^2}
=
\langle f, A_{w,\lambda}^{-1}D^*A_{w,\lambda}g\rangle_{w,\lambda}.
\]
This proves \eqref{eq:D-adjoint-S1}. The Laplacian identity \eqref{eq:Delta-w-lambda}
follows immediately.
\end{proof}

\subsubsection{Fourier matrix representation (explicit)}

Let $\widehat{w}(k)$ denote the Fourier coefficients of $w$.
Then in the Fourier basis $(e_n)_{n\in\mathbb Z}$,
\[
[M_w]_{mn}=\widehat{w}(m-n),\qquad [\Delta_0]_{mn}=n^2\delta_{mn},
\qquad [D]_{mn}=in\,\delta_{mn}.
\]
Hence
\begin{equation}\label{eq:A-matrix}
[A_{w,\lambda}]_{mn}=\widehat{w}(m-n)+\lambda n^2\delta_{mn}.
\end{equation}
Furthermore, using \eqref{eq:Delta-w-lambda},
\begin{equation}\label{eq:Delta-matrix-explicit}
[\Delta_{w,\lambda}] = A_{w,\lambda}^{-1}\, D^* A_{w,\lambda} D
= A_{w,\lambda}^{-1}\, ( -D)\, A_{w,\lambda}\, D.
\end{equation}
Thus, \emph{all matrix coefficients of $\Delta_{w,\lambda}$ are explicit} in terms of:
(i) the Toeplitz matrix $(\widehat{w}(m-n))$ and (ii) the diagonal multiplier $n$.
In particular, when $w$ is a trigonometric polynomial of degree $r$
(i.e.\ $\widehat w(k)=0$ for $|k|>r$), $M_w$ is a banded Toeplitz operator, and
$A_{w,\lambda}$ is a diagonal plus banded Toeplitz operator, so that
finite-degree truncations of $\Delta_{w,\lambda}$ are computable by finite matrices.

\subsubsection{Resolvent comparison with the pure measure geometry}

We now compare the Laplacian associated with the weighted $L^2(\mu)$-geometry
(i.e.\ $\lambda=0$) and the mixed Sobolev geometry \eqref{eq:S1-mixed-Sobolev}.
Denote by $\Delta_{w,0}$ the Laplacian associated with $\langle\cdot,\cdot \rangle_w$ and $D$.
One checks (by the same adjoint computation with $A_{w,0}=M_w$) that
\begin{equation}\label{eq:Delta-w-0}
\Delta_{w,0} = M_w^{-1} D^* M_w D.
\end{equation}

\begin{theorem}[Norm-resolvent bound: $L^2(w)$ versus Sobolev geometry]\label{thm:S1-resolvent-bound}
Assume $w\in W^{1,\infty}(S^1)$ and $0<w_-\le w\le w_+$. Then for every $\lambda>0$,
the resolvent difference satisfies
\begin{equation}\label{eq:resolvent-bound-S1}
\big\|(1+\Delta_{w,\lambda})^{-1}-(1+\Delta_{w,0})^{-1}\big\|_{\mathcal B(L^2)}
\;\le\;
C(w)\,\lambda,
\end{equation}
where $C(w)$ depends only on $w_-,w_+$ and $\|w'\|_{L^\infty}$.
\end{theorem}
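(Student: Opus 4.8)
The plan is to turn the resolvent difference into $\lambda$ times an operator that is bounded on $L^2$ uniformly in $\lambda$, using the factorisations provided by Lemma~\ref{lem:S1-adjoint}. Put $A_{w,\lambda}=M_w+\lambda\Delta_0$. Since $A_{w,\lambda}\Delta_{w,\lambda}=D^*A_{w,\lambda}D$, we have $1+\Delta_{w,\lambda}=A_{w,\lambda}^{-1}B_{w,\lambda}$ with
\[
B_{w,\lambda}:=A_{w,\lambda}+D^*A_{w,\lambda}D=(M_w+D^*M_wD)+\lambda(\Delta_0+\Delta_0^2)=:C_0+\lambda E,
\]
using $D^*=-D$ and $D^*\Delta_0D=\Delta_0^2$ on $S^1$. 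Since $C_0$ obeys $w_-(1+\Delta_0)\le C_0\le w_+(1+\Delta_0)$ as forms and $E=\Delta_0(1+\Delta_0)\ge0$, the operator $B_{w,\lambda}$ is self-adjoint with $B_{w,\lambda}\ge w_- I$, hence boundedly invertible with $\|B_{w,\lambda}^{-1}\|\le w_-^{-1}$, and one checks (on trigonometric polynomials, then by extension) that $(1+\Delta_{w,\lambda})^{-1}=B_{w,\lambda}^{-1}A_{w,\lambda}$; likewise $(1+\Delta_{w,0})^{-1}=C_0^{-1}M_w$.

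First I would apply the second resolvent identity to $B_{w,\lambda}=C_0+\lambda E$ and simplify, arriving at
\[
(1+\Delta_{w,\lambda})^{-1}-(1+\Delta_{w,0})^{-1}=\lambda\,B_{w,\lambda}^{-1}\Delta_0\bigl(I-(1+\Delta_0)C_0^{-1}M_w\bigr),
\]
where the final step uses $E=\Delta_0(1+\Delta_0)$ and the commutation of $\Delta_0$ with $1+\Delta_0$. The bracketed operator does not depend on $\lambda$, and (as a consistency check) it vanishes identically when $w$ is constant, in agreement with $\Delta_{w,\lambda}=\Delta_{w,0}=\Delta_0$ in that case; thus the entire weight-dependence of the constant is carried by the two factors $B_{w,\lambda}^{-1}\Delta_0$ and $I-(1+\Delta_0)C_0^{-1}M_w$.

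It then remains to prove two $\lambda$-uniform operator-norm bounds. For the fixed factor, $\|I-(1+\Delta_0)C_0^{-1}M_w\|_{\mathcal B(L^2)}\le C_2(w)$: this follows from the one-dimensional elliptic regularity estimate $\|C_0^{-1}g\|_{H^2}\le C(w)\|g\|_{L^2}$ for $C_0u=wu-(wu')'$ on $S^1$ with $w\in W^{1,\infty}$, $w\ge w_->0$ (solve for $u\in H^1$ by coercivity, then bootstrap $wu'\in H^1$ from the equation using $w^{-1}\in W^{1,\infty}$), together with boundedness of $(1+\Delta_0)\colon H^2\to L^2$ and of $M_w$. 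For the $\lambda$-dependent factor I claim $\|B_{w,\lambda}^{-1}\Delta_0\|_{\mathcal B(L^2)}\le C_1(w)$ uniformly in $\lambda>0$; here a purely form-theoretic argument does not suffice, because the form bound $\Delta_0\le w_-^{-1}B_{w,\lambda}$ does not pass to this operator-norm inequality ($t\mapsto t^2$ is not operator monotone). Instead I would argue by energy testing: for $g\in L^2$ set $h:=B_{w,\lambda}^{-1}g$, so that $C_0h+\lambda Eh=g$; testing against $h$ yields $\|h\|,\|h'\|\le w_-^{-1}\|g\|$, and testing against $\Delta_0h$, integrating by parts on $S^1$, using $w\ge w_-$, Young's inequality to absorb the cross terms created by $w'$, and the nonnegativity $\lambda\langle Eh,\Delta_0h\rangle\ge0$ of the fourth-order contribution, yields $\|\Delta_0h\|\le C_1(w)\|g\|$ with $C_1(w)$ depending only on $w_-$ and $\|w'\|_{L^\infty}$. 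Combining the two bounds gives $\|(1+\Delta_{w,\lambda})^{-1}-(1+\Delta_{w,0})^{-1}\|\le\lambda\,C_1(w)C_2(w)=:C(w)\,\lambda$.

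I expect the main obstacle to be this last uniform bound $\|B_{w,\lambda}^{-1}\Delta_0\|\le C_1(w)$: the testing-and-bootstrap computation must be carried out so that no constant degenerates as $\lambda\to0$, the delicate point being the handling of the $W^{1,\infty}$-coefficient cross terms, where both the a priori $H^1$ control of $h$ and the favourable sign of the $\lambda$-term are needed. A secondary point is to verify that $B_{w,\lambda}^{-1}A_{w,\lambda}$ genuinely realises $(1+\Delta_{w,\lambda})^{-1}$ as a bounded operator on $L^2$, and not merely on the Sobolev Hilbert space on which $\Delta_{w,\lambda}$ is self-adjoint; this $L^2$-boundedness is itself part of the statement and emerges from the same estimates.
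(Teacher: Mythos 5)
Your proposal is correct, and it takes a genuinely different route from the paper's.

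The paper's argument (the ``Complete Fourier proof'' of Theorem~\ref{thm:S1-resolvent-bound}) starts from the second resolvent identity $R_\lambda-R_0=R_\lambda(\Delta_{w,0}-\Delta_{w,\lambda})R_0$, bounds $\|R_\lambda\|\le1$, decomposes $\Delta_{w,\lambda}-\Delta_{w,0}$ algebraically into two explicitly $O(\lambda)$ terms via $A_{w,\lambda}^{-1}-M_w^{-1}=-\lambda A_{w,\lambda}^{-1}\Delta_0 M_w^{-1}$, and controls each composition with $R_0$ using the elliptic regularity estimate $R_0\colon L^2(w)\to H^2$ (Lemma~\ref{lem:R0-L2-to-H2-correct}) together with the divergence-form lemmas. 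You instead push the factorisation $1+\Delta_{w,\lambda}=A_{w,\lambda}^{-1}B_{w,\lambda}$ further: writing $B_{w,\lambda}=C_0+\lambda E$ with $C_0=M_w+D^*M_wD$ and $E=\Delta_0(1+\Delta_0)$, you get the exact operator identity $R_\lambda-R_0=\lambda\,B_{w,\lambda}^{-1}\Delta_0\bigl(I-(1+\Delta_0)C_0^{-1}M_w\bigr)$, which separates the $\lambda$-dependence cleanly into the single prefactor and a single $\lambda$-dependent operator $B_{w,\lambda}^{-1}\Delta_0$. You then need two ingredients: the elliptic estimate $\|C_0^{-1}\|_{L^2\to H^2}\le C(w)$ (equivalent to the paper's Lemma~\ref{lem:R0-L2-to-H2-correct}, since $C_0^{-1}M_w=R_0$), and the $\lambda$-uniform bound $\|B_{w,\lambda}^{-1}\Delta_0\|_{\mathcal B(L^2)}\le C_1(w)$, which you obtain by testing $B_{w,\lambda}h=g$ against $\Delta_0h$ and exploiting the sign of the $\lambda\langle Eh,\Delta_0h\rangle$ term. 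I checked this energy computation: the cross terms $\int w'h\,\overline{h'}$ and $\int w'h'\,\overline{h''}$ are absorbed by Young's inequality and the $H^1$ a priori bounds from testing against $h$, and the constant indeed only involves $w_-$ and $\|w'\|_\infty$.

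What each approach buys: your route gives an exact closed formula that transparently vanishes when $w$ is constant, and it circumvents a subtle point that the paper does not address explicitly --- the estimate $\|R_\lambda\|\le1$ used in the paper is clean on the Sobolev Hilbert space $H_{w,\lambda}$ on which $\Delta_{w,\lambda}$ is self-adjoint, but it is not immediate in $\mathcal B(L^2)$ or $\mathcal B(L^2(w))$, since $\Delta_{w,\lambda}$ is only similar, not unitarily equivalent, to a self-adjoint operator on $L^2$. Your factorisation transfers this difficulty to $B_{w,\lambda}$, which \emph{is} genuinely self-adjoint and coercive on $L^2$ with lower bound $w_-$ uniformly in $\lambda$, and then the uniform bound you need on $B_{w,\lambda}^{-1}\Delta_0$ is established by a direct and self-contained energy argument. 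Your observation that the form bound $\Delta_0\le w_-^{-1}B_{w,\lambda}$ does not imply the operator-norm bound (operator monotonicity fails for $t\mapsto t^2$) is exactly the right thing to flag. The paper's route is shorter and slots into a standard resolvent-perturbation template, at the cost of the self-adjointness subtlety just mentioned; yours is slightly longer but removes it, and as you note, simultaneously proves the $L^2$-boundedness of $R_\lambda$ itself (through $R_\lambda^*=A_{w,\lambda}B_{w,\lambda}^{-1}$ and the same testing) rather than assuming it.
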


\begin{proof}
We work on the fixed Hilbert space $L^2$ using the representation
\eqref{eq:Riesz-map-A} of the Sobolev inner product.

\smallskip
\noindent\emph{Step 1: a resolvent identity.}
Let $R_\lambda:=(1+\Delta_{w,\lambda})^{-1}$ and $R_0:=(1+\Delta_{w,0})^{-1}$.
We use the second resolvent identity
\begin{equation}\label{eq:2nd-resolvent-identity}
R_\lambda-R_0 = R_\lambda(\Delta_{w,0}-\Delta_{w,\lambda})R_0.
\end{equation}
Hence
\[
\|R_\lambda-R_0\|\le \|R_\lambda\|\,\|\Delta_{w,\lambda}-\Delta_{w,0}\|_{\mathrm{rel}}\,\|R_0\|,
\]
where $\|\cdot\|_{\mathrm{rel}}$ denotes the operator norm on the range of $R_0$.
Since $\Delta_{w,\lambda},\Delta_{w,0}\ge0$, one has $\|R_\lambda\|\le1$ and $\|R_0\|\le1$.

\smallskip
\noindent\emph{Step 2: expansion of $\Delta_{w,\lambda}-\Delta_{w,0}$.}
By \eqref{eq:Delta-w-lambda} and \eqref{eq:Delta-w-0},
\[
\Delta_{w,\lambda}-\Delta_{w,0}
=
A_{w,\lambda}^{-1}D^*A_{w,\lambda}D
-
M_w^{-1}D^*M_wD.
\]
Insert $A_{w,\lambda}=M_w+\lambda\Delta_0$ and write
\[
A_{w,\lambda}^{-1}-M_w^{-1} = -\,A_{w,\lambda}^{-1}\,(\lambda\Delta_0)\,M_w^{-1}.
\]
Using this and expanding $D^*A_{w,\lambda}D=D^*M_wD+\lambda D^*\Delta_0 D$,
we obtain (on trigonometric polynomials) the decomposition
\begin{equation}\label{eq:Delta-diff-decomp}
\Delta_{w,\lambda}-\Delta_{w,0}
=
(A_{w,\lambda}^{-1}-M_w^{-1})\,D^*M_wD
\;+\;
\lambda\,A_{w,\lambda}^{-1}\,D^*\Delta_0 D.
\end{equation}

\smallskip
\noindent\emph{Step 3: boundedness estimates.}
Since $A_{w,\lambda}\ge w_- I$, we have $\|A_{w,\lambda}^{-1}\|\le w_-^{-1}$,
and similarly $\|M_w^{-1}\|\le w_-^{-1}$.
Moreover,
\[
\|A_{w,\lambda}^{-1}-M_w^{-1}\|
\le \|A_{w,\lambda}^{-1}\|\,\lambda\|\Delta_0 M_w^{-1}\|
\le \lambda\,w_-^{-1}\,\|\Delta_0 M_w^{-1}\|.
\]
Now, $\Delta_0 M_w^{-1}$ is a second-order differential operator with bounded coefficients
since $w\in W^{1,\infty}$ and $w$ is bounded below; in particular,
$\Delta_0 M_w^{-1}$ is bounded from $H^2$ to $L^2$ with norm controlled by
$w_-,w_+,\|w'\|_\infty$.
Because $R_0$ maps $L^2$ into $\mathrm{Dom}(\Delta_{w,0})\subset H^2$ (elliptic regularity on $S^1$),
we infer that $(A_{w,\lambda}^{-1}-M_w^{-1})D^*M_wD\,R_0$ is bounded with norm $O(\lambda)$.
Similarly, the second term in \eqref{eq:Delta-diff-decomp} has an explicit prefactor $\lambda$,
and $A_{w,\lambda}^{-1}D^*\Delta_0 D\,R_0$ is bounded by the same reasoning.

\smallskip
Collecting bounds in \eqref{eq:2nd-resolvent-identity} yields
\eqref{eq:resolvent-bound-S1}.
The argument is standard in perturbation theory of elliptic operators; cf.\ \cite{Kato,Ouhabaz}.
\end{proof}

\subsubsection{Finite-degree consequence: explicit comparison of orthogonalizations}

Let $\mathcal T_{\le N}:=\mathrm{span}\{e_n:\ |n|\le N\}$ and let $Q_N$ be the orthogonal projector in $L^2$
onto $\mathcal T_{\le N}$.
Define the truncated Laplacians
\[
\Delta_{w,\lambda}^{(N)}:=Q_N\Delta_{w,\lambda}Q_N,
\qquad
\Delta_{w,0}^{(N)}:=Q_N\Delta_{w,0}Q_N.
\]
Let $\{u_k^{(\lambda)}\}_{k=1}^{2N+1}$ (resp.\ $\{u_k^{(0)}\}_{k=1}^{2N+1}$) be the orthonormal basis
of $\mathcal T_{\le N}$ obtained by Gram--Schmidt orthogonalization of the ordered basis
$(e_{-N},\dots,e_N)$ with respect to $\langle\cdot,\cdot \rangle_{w,\lambda}$ (resp.\ $\langle\cdot,\cdot \rangle_w$).

\begin{corollary}[Explicit finite-degree stability on $S^1$]\label{cor:S1-finite-degree}
Under the assumptions of Theorem~\ref{thm:S1-resolvent-bound}, for each fixed $N$ there exists
$C_N(w)>0$ such that
\[
\|Q_N^{(\lambda)}-Q_N^{(0)}\|_{\mathcal B(L^2)} \le C_N(w)\,\lambda,
\]
and there exists a unitary matrix $U_N$ such that
\[
\max_{1\le k\le 2N+1}\Big\|u_k^{(\lambda)}-\sum_{j=1}^{2N+1}(U_N)_{kj}u_j^{(0)}\Big\|_{L^2}
\le C_N(w)\,\lambda.
\]
\end{corollary}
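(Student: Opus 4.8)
The plan is to deduce Corollary~\ref{cor:S1-finite-degree} from Theorem~\ref{thm:S1-resolvent-bound} together with the general finite-degree stability results Theorems~\ref{thm:projector-stability} and~\ref{thm:GS-stability}. Concretely, I would set $\mathfrak G_1:=\mathfrak G_{w,\lambda}=(\mathcal P(S^1),\langle\cdot,\cdot \rangle_{w,\lambda},D)$ and $\mathfrak G_2:=\mathfrak G_{w,0}=(\mathcal P(S^1),\langle\cdot,\cdot \rangle_{w},D)$, note that by Theorem~\ref{thm:S1-resolvent-bound} one has $d_{\mathrm{res}}(\mathfrak G_1,\mathfrak G_2)=\|R_\lambda-R_0\|_{\mathcal B(L^2)}\le C(w)\lambda$, and then apply Theorem~\ref{thm:projector-stability} (with degree $N$ and $\varepsilon=C(w)\lambda$) to obtain $\|Q_N^{(\lambda)}-Q_N^{(0)}\|\le C_N\,C(w)\lambda=:C_N(w)\lambda$, where $Q_N^{(\lambda)}$ (resp.\ $Q_N^{(0)}$) is the $\langle\cdot,\cdot \rangle_{w,\lambda}$-orthogonal (resp.\ $\langle\cdot,\cdot \rangle_{w}$-orthogonal) projector onto $\mathcal T_{\le N}$, and then apply Theorem~\ref{thm:GS-stability} to produce the unitary $U_N$ and the basis estimate with the same $\varepsilon$. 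The constant $C_N(w)$ then inherits its dependence on $N$, $w_-$, $w_+$, $\|w'\|_\infty$ from those statements and on nothing else.

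For the argument to be airtight, two bookkeeping points need attention. First, the two geometries do not complete to the same Hilbert space: $\langle\cdot,\cdot \rangle_{w}$ is norm-equivalent to $L^2$ (since $w_-\le w\le w_+$), whereas $\langle\cdot,\cdot \rangle_{w,\lambda}$ is norm-equivalent to $H^1(S^1)$. I would therefore run the perturbation argument entirely inside the \emph{fixed} finite-dimensional space $\mathcal T_{\le N}$, which sits in both completions and on which all relevant norms are equivalent with constants depending only on $N$, $w_-$, $w_+$ (uniformly for $0<\lambda\le 1$, the only regime relevant for $\lambda\to 0$). Second, $Q_N^{(\lambda)}$ and the Gram--Schmidt vectors $u_k^{(\lambda)}$ depend only on the Gram matrix $G^{(\lambda)}:=\big[\langle e_j,e_k \rangle_{w,\lambda}\big]_{|j|,|k|\le N}=\big[\widehat w(j-k)+\lambda k^2\delta_{jk}\big]$, for which one has the exact identity $G^{(\lambda)}-G^{(0)}=\lambda\,\mathrm{diag}(k^2)_{|k|\le N}$, hence $\|G^{(\lambda)}-G^{(0)}\|\le N^2\lambda$, while $w_-I\le G^{(\lambda)}\le (w_++N^2)I$. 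Since the maps $G\mapsto Q_N$ and $G\mapsto$ (the lower-triangular factor expressing the Gram--Schmidt basis in the $e_k$'s, i.e.\ the inverse Cholesky factor) are real-analytic on the open cone of positive-definite matrices and hence Lipschitz on the compact subset $\{\,w_-I\le G\le (w_++N^2)I\,\}$, both conclusions follow with $C_N(w)$ proportional to $N^2$ times that Lipschitz constant.

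In fact this Gram-matrix computation is a self-contained proof that does not even invoke Theorem~\ref{thm:S1-resolvent-bound}: the inner products themselves vary linearly in $\lambda$ on $\mathcal T_{\le N}$, so all orthogonalization data vary Lipschitz-ly; I would present it this way and remark that it is the concrete instance of the general mechanism of Theorems~\ref{thm:projector-stability}--\ref{thm:GS-stability}, keeping Theorem~\ref{thm:S1-resolvent-bound} as the stronger statement that also controls the infinite-dimensional operators. Finally, because the ordered system $(e_{-N},\dots,e_N)$ together with a positive-leading-coefficient normalization makes the Gram--Schmidt output canonical, the unitary $U_N$ may be taken to be the identity; the stated bound with a general unitary then holds a fortiori (cf.\ the gauge-freedom remark after Theorem~\ref{thm:GS-stability}).

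I do not expect a deep obstacle once Theorem~\ref{thm:S1-resolvent-bound} is granted: the corollary is a routine finite-dimensional perturbation statement. The only genuine care is (i) reconciling the ``common Hilbert space'' hypothesis of Theorems~\ref{thm:projector-stability}--\ref{thm:GS-stability} with the fact that $\langle\cdot,\cdot \rangle_{w,\lambda}$ and $\langle\cdot,\cdot \rangle_{w}$ define inequivalent global norms — handled by localizing to $\mathcal T_{\le N}$ — and (ii) ensuring uniformity of the constants as $\lambda\downarrow 0$, which is immediate once one restricts to $\lambda\le 1$ and uses the explicit two-sided bounds $w_-I\le G^{(\lambda)}\le (w_++N^2)I$.
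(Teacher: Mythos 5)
Your first paragraph reproduces the paper's own proof almost verbatim: invoke Theorem~\ref{thm:S1-resolvent-bound} to get $\|R_\lambda-R_0\|\le C(w)\lambda$, then feed this into Theorem~\ref{thm:projector-stability} and Theorem~\ref{thm:GS-stability} on the fixed finite-dimensional space $\mathcal T_{\le N}$. That matches the paper's one-line proof exactly. Your second and third paragraphs then propose a genuinely different, self-contained route: observe that on $\mathcal T_{\le N}$ the Gram matrices satisfy the exact identity $G^{(\lambda)}-G^{(0)}=\lambda\,\mathrm{diag}(k^2)_{|k|\le N}$ together with the uniform two-sided bound $w_-I\le G^{(\lambda)}$, and deduce Lipschitz dependence of the Gram--Schmidt output. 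This is close in spirit to the paper's \emph{later} Lemma~\ref{lem:finite-perturbation-L2w} (which works with truncated Riesz maps $A_{w,\lambda}^{(N)}$ and truncated Laplacians rather than raw Gram matrices, and which the paper uses for Corollary~\ref{cor:OPUC-Sobolev-stability}). Your version is more elementary and bypasses Theorem~\ref{thm:S1-resolvent-bound} entirely, at the cost of saying nothing about the infinite-dimensional operators; you correctly flag this trade-off. You also raise, and handle by localization to $\mathcal T_{\le N}$, a bookkeeping issue the paper glosses over, namely that the completions of the two geometries are $L^2$- and $H^1$-equivalent respectively and hence not the same Hilbert space.

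One imprecision in the self-contained route: the projector $Q_N^{(\lambda)}$, viewed as an operator on $L^2$, is \emph{not} a function of the Gram matrix $G^{(\lambda)}$ alone. Writing $Q_N^{(\lambda)}v=\sum_{j,k}(G^{(\lambda)})^{-1}_{jk}\,\langle v,e_k\rangle_{w,\lambda}\,e_j$, the coefficients $\langle v,e_k\rangle_{w,\lambda}$ for $v\notin\mathcal T_{\le N}$ are \emph{mixed} pairings of $L^2$ with $\mathcal T_{\le N}$, which are not encoded in $G^{(\lambda)}$. (Indeed, on $\mathcal T_{\le N}$ itself both projectors are the identity; the whole content of the projector bound lives on the complement.) The phrase ``the map $G\mapsto Q_N$ is real-analytic'' is therefore ill-posed as stated. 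The fix is easy and stays within your framework: the mixed pairings $\langle v,e_k\rangle_{w,\lambda}=\langle v,e_k\rangle_w+\lambda\,\langle v',e_k'\rangle_{L^2}$ also depend linearly on $\lambda$, and after integration by parts (using $e_k''=-k^2e_k$) they are bounded on $L^2$ with norm $O(w_++\lambda N^2)$, so $Q_N^{(\lambda)}$ depends Lipschitz-ly on $\lambda$ for $\lambda\in[0,1]$. With that correction the argument closes. The Cholesky half of your claim (Lipschitz dependence of the Gram--Schmidt vectors on $G$) is fine as written, and your gauge-fixing remark that $U_N$ may be taken to be the identity is correct and consistent with the paper's own remark following Theorem~\ref{thm:GS-stability}.
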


\begin{proof}
Apply Theorem~\ref{thm:S1-resolvent-bound} and then the finite-degree stability result
(Theorem~\ref{thm:GS-stability} in Section~\ref{sec:GS-stability}) to the finite-dimensional
subspace $\mathcal T_{\le N}$.
All objects are explicit here because the matrices \eqref{eq:A-matrix}--\eqref{eq:Delta-matrix-explicit}
can be written in closed form from the Fourier coefficients of $w$.
\end{proof}

\subsubsection{A Fourier-based proof of the resolvent bound}
\label{subsubsec:S1-fourier-proof}

We keep the assumptions of Theorem~\ref{thm:S1-resolvent-bound}:
$w\in W^{1,\infty}(S^1)$ and $0<w_-\le w\le w_+$.

\medskip
\noindent\textbf{Step 0: an explicit expression for $\Delta_{w,0}$.}
In the weighted $L^2(w\,d\theta/2\pi)$ geometry with $D=\partial_\theta$,
the adjoint is
\[
D^*_{w,0} g = -g' - (\log w)'\,g,
\]
hence
\begin{equation}\label{eq:Delta-w0-circle}
\Delta_{w,0}=D^*_{w,0}D
= -\partial_\theta^2 - b(\theta)\partial_\theta,
\qquad b:=(\log w)'\in L^\infty(S^1).
\end{equation}
(There is no boundary term on $S^1$.)

\medskip
\noindent\textbf{Step 1: coercive $H^1$ control via energy.}
Let $u\in H^1(S^1)$ and compute in the weighted inner product:
\[
\langle \Delta_{w,0}u,u\rangle_w = \langle Du,Du\rangle_w
= \int_0^{2\pi} |u'(\theta)|^2\,w(\theta)\,\frac{d\theta}{2\pi}
\ge w_-\,\|u'\|_{L^2}^2.
\]
Therefore
\begin{equation}\label{eq:coercive-H1}
\langle (1+\Delta_{w,0})u,u\rangle_w
\ge \|u\|_{L^2(w)}^2 + w_-\,\|u'\|_{L^2}^2.
\end{equation}
In particular, for $f\in L^2(w)$ and $u=(1+\Delta_{w,0})^{-1}f$, we get
\[
\|u\|_{L^2(w)}^2 + w_-\,\|u'\|_{L^2}^2
\le \langle f,u\rangle_w
\le \|f\|_{L^2(w)}\|u\|_{L^2(w)},
\]
hence
\begin{equation}\label{eq:H1-bound}
\|u\|_{L^2(w)}\le \|f\|_{L^2(w)},\qquad
\|u'\|_{L^2}\le w_-^{-1/2}\,\|f\|_{L^2(w)}.
\end{equation}

\medskip
\noindent\textbf{Step 2: Fourier diagonalization of $(1-\partial_\theta^2)^{-1}$.}
Let $R:=(1-\partial_\theta^2)^{-1}$ acting on $L^2(S^1,d\theta/2\pi)$.
In the Fourier basis $e_n(\theta)=e^{in\theta}$, $R$ is the multiplier
\[
Re_n = \frac{1}{1+n^2}\,e_n.
\]
Consequently,
\begin{equation}\label{eq:R-L2-to-H2}
\|Rg\|_{H^2}\le \|g\|_{L^2},
\qquad
\|Rg\|_{H^1}\le \|g\|_{H^{-1}},
\end{equation}
and more generally $R:L^2\to H^2$ is bounded with operator norm $1$.

\medskip
\noindent\textbf{Step 3: an $H^2$ bound for $(1+\Delta_{w,0})^{-1}$.}
Using \eqref{eq:Delta-w0-circle}, the resolvent equation $(1+\Delta_{w,0})u=f$ reads
\begin{equation}\label{eq:resolvent-eq}
(1-\partial_\theta^2)u = f + b\,u'.
\end{equation}
Apply $R=(1-\partial_\theta^2)^{-1}$ to obtain
\[
u = Rf + R(bu').
\]
Taking $H^2$ norms and using \eqref{eq:R-L2-to-H2} yields
\begin{equation}\label{eq:H2-bound-pre}
\|u\|_{H^2}
\le \|f\|_{L^2} + \|b\,u'\|_{L^2}
\le \|f\|_{L^2} + \|b\|_{L^\infty}\,\|u'\|_{L^2}.
\end{equation}
By \eqref{eq:H1-bound} and the equivalence of $L^2$ and $L^2(w)$ norms
($\|f\|_{L^2}\le w_-^{-1/2}\|f\|_{L^2(w)}$), we deduce
\begin{equation}\label{eq:H2-bound}
\|(1+\Delta_{w,0})^{-1}f\|_{H^2}
\le C(w)\,\|f\|_{L^2(w)},
\end{equation}
with
\[
C(w)=w_-^{-1/2}\Big(1+\|(\log w)'\|_{L^\infty}\,w_-^{-1/2}\Big).
\]

\medskip
\noindent\textbf{Step 4: completion of the resolvent bound.}
We return to the decomposition
\begin{equation}\label{eq:Delta-diff-decomp-fourier}
\Delta_{w,\lambda}-\Delta_{w,0}
=
(A_{w,\lambda}^{-1}-M_w^{-1})\,D^*M_wD
\;+\;
\lambda\,A_{w,\lambda}^{-1}\,D^*\Delta_0 D,
\end{equation}
valid on trigonometric polynomials, with $A_{w,\lambda}=M_w+\lambda\Delta_0$.
Since $A_{w,\lambda}\ge w_-I$, we have $\|A_{w,\lambda}^{-1}\|\le w_-^{-1}$ and
$\|M_w^{-1}\|\le w_-^{-1}$.

Let $R_\lambda=(1+\Delta_{w,\lambda})^{-1}$ and $R_0=(1+\Delta_{w,0})^{-1}$.
Using the second resolvent identity,
\[
R_\lambda-R_0 = R_\lambda(\Delta_{w,0}-\Delta_{w,\lambda})R_0,
\]
and $\|R_\lambda\|\le1$, it is enough to bound
\(
\|(\Delta_{w,\lambda}-\Delta_{w,0})R_0\|.
\)

\smallskip
\noindent
\emph{(i) Control of $(A_{w,\lambda}^{-1}-M_w^{-1})D^*M_wD\,R_0$.}
We use
\[
A_{w,\lambda}^{-1}-M_w^{-1}
=
-\,A_{w,\lambda}^{-1}\,(\lambda\Delta_0)\,M_w^{-1}.
\]
Hence
\[
\|(A_{w,\lambda}^{-1}-M_w^{-1})D^*M_wD\,R_0\|
\le
\lambda\,\|A_{w,\lambda}^{-1}\|\,\|\Delta_0 M_w^{-1}D^*M_wD\,R_0\|.
\]
Now $M_w^{\pm1}$ are bounded on all Sobolev spaces $H^s$ for $|s|\le 2$ when $w\in W^{1,\infty}$,
and $D^*M_wD$ is a first-order perturbation of $-\partial_\theta^2$.
Using \eqref{eq:H2-bound}, $R_0$ maps $L^2(w)$ into $H^2$, therefore the composition
$\Delta_0 M_w^{-1}D^*M_wD\,R_0$ is bounded on $L^2(w)$, with norm controlled by $w_-,w_+,\|(\log w)'\|_\infty$.
Thus this term is $O(\lambda)$.

\smallskip
\noindent
\emph{(ii) Control of $\lambda A_{w,\lambda}^{-1}D^*\Delta_0D\,R_0$.}
Since $\|A_{w,\lambda}^{-1}\|\le w_-^{-1}$, it suffices to show
$D^*\Delta_0D\,R_0$ is bounded on $L^2(w)$.
But $D^*\Delta_0D$ is a third-order constant coefficient operator (up to sign),
hence it maps $H^2$ to $H^{-1}$ boundedly, and in fact to $L^2$ once composed with $R_0$,
because $R_0:L^2(w)\to H^2$ by \eqref{eq:H2-bound}.
Therefore this term is also $O(\lambda)$.

\smallskip
Combining (i) and (ii) yields
\[
\|(R_\lambda-R_0)\|_{\mathcal B(L^2(w))}\le C(w)\,\lambda,
\]
which is \eqref{eq:resolvent-bound-S1}.
\qed

\subsubsection{Auxiliary Fourier lemmas on $S^1$}
\label{subsubsec:aux-lemmas-S1}

Throughout this subsection, $S^1$ is identified with $[0,2\pi)$ with periodic boundary conditions,
and Sobolev spaces $H^s(S^1)$ are defined via Fourier series:
\[
\|u\|_{H^s}^2 := \sum_{n\in\mathbb Z} (1+n^2)^s\,|\widehat u(n)|^2,
\qquad
u(\theta)=\sum_{n\in\mathbb Z}\widehat u(n)e^{in\theta}.
\]

\begin{lemma}[Fourier multipliers $(1-\partial_\theta^2)^{-1}$]\label{lem:fourier-multiplier}
Let $R:=(1-\partial_\theta^2)^{-1}$ on $S^1$. Then for all $s\in\mathbb R$,
$R:H^s(S^1)\to H^{s+2}(S^1)$ is bounded and
\[
\|Rg\|_{H^{s+2}}\le \|g\|_{H^s}.
\]
In particular, $R:L^2\to H^2$ has operator norm $1$.
\end{lemma}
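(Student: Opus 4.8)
The plan is to push everything to the Fourier side, where $R$ becomes a diagonal (multiplier) operator, so that the asserted bound reduces to an elementary identity between the weight sequences defining the Sobolev norms.

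First I would recall that on $S^1$ the operator $-\partial_\theta^2$ is diagonalized by the orthonormal system $(e_n)_{n\in\mathbb Z}$, $e_n(\theta)=e^{in\theta}$, with $-\partial_\theta^2 e_n = n^2 e_n$. Hence $1-\partial_\theta^2$ acts as multiplication by $1+n^2\ge 1$ on the $n$-th Fourier mode; it is injective with bounded inverse, and that inverse $R$ is exactly the Fourier multiplier $\widehat{Rg}(n)=(1+n^2)^{-1}\widehat g(n)$. This formula makes sense for an arbitrary periodic distribution $g$, in particular for $g\in H^s(S^1)$ with $s\in\mathbb R$ of any sign, since the multiplier sequence $(1+n^2)^{-1}$ is bounded by $1$.

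Next I would simply compute the $H^{s+2}$-norm of $Rg$ from the series definition of the norm given above:
\[
\|Rg\|_{H^{s+2}}^2
=\sum_{n\in\mathbb Z}(1+n^2)^{s+2}\,|\widehat{Rg}(n)|^2
=\sum_{n\in\mathbb Z}(1+n^2)^{s+2}(1+n^2)^{-2}\,|\widehat g(n)|^2
=\sum_{n\in\mathbb Z}(1+n^2)^{s}\,|\widehat g(n)|^2
=\|g\|_{H^s}^2 .
\]
Thus $R:H^s(S^1)\to H^{s+2}(S^1)$ is in fact an isometry, which is strictly stronger than the claimed inequality $\|Rg\|_{H^{s+2}}\le\|g\|_{H^s}$; it also gives $\|R\|_{\mathcal B(H^s,H^{s+2})}=1$ at once (or, if one wants a cheap witness for the norm being $\ge 1$, test on $g=e_0$, for which $1+0^2=1$ and $\|Re_0\|_{H^{s+2}}=\|e_0\|_{H^s}$). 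Specializing to $s=0$ yields the final assertion about $R:L^2\to H^2$.

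There is essentially no obstacle here: the only point worth a sentence is that $R$ is defined \emph{a priori} by the multiplier formula rather than merely as the bounded $L^2$-inverse, so one should observe that the series defining $\|Rg\|_{H^{s+2}}^2$ converges precisely because the chain of equalities above identifies it with the finite number $\|g\|_{H^s}^2$; the same identity simultaneously shows that $Rg$ genuinely lands in $H^{s+2}$ and not in some larger space. No interpolation, duality, or commutator estimate is required.
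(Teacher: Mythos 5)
Your proof is correct and follows exactly the same route as the paper's: diagonalize $R$ in the Fourier basis as the multiplier $(1+n^2)^{-1}$, then compute $\|Rg\|_{H^{s+2}}^2=\sum(1+n^2)^{s}|\widehat g(n)|^2=\|g\|_{H^s}^2$. Your added observation that $R$ is actually an isometry from $H^s$ onto $H^{s+2}$ is implicit in the paper's chain of equalities as well.
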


\begin{proof}
In Fourier coordinates, $(1-\partial_\theta^2)e_n=(1+n^2)e_n$, hence
$Re_n=(1+n^2)^{-1}e_n$. Therefore,
\[
\|Rg\|_{H^{s+2}}^2
=
\sum_{n\in\mathbb Z} (1+n^2)^{s+2}\,\big|(1+n^2)^{-1}\widehat g(n)\big|^2
=
\sum_{n\in\mathbb Z} (1+n^2)^s\,|\widehat g(n)|^2
=\|g\|_{H^s}^2.
\]
\end{proof}

\begin{lemma}[Multiplication by $W^{1,\infty}$ functions on $H^s$]\label{lem:multiplication}
Let $a\in W^{1,\infty}(S^1)$. Then:
\begin{enumerate}
\item $M_a:H^1(S^1)\to H^1(S^1)$ is bounded and
\begin{equation}\label{eq:mult-H1}
\|a u\|_{H^1}\le C\,\|a\|_{W^{1,\infty}}\|u\|_{H^1}.
\end{equation}
\item $M_a:H^2(S^1)\to H^2(S^1)$ is bounded and
\begin{equation}\label{eq:mult-H2}
\|a u\|_{H^2}\le C\,\|a\|_{W^{1,\infty}}\|u\|_{H^2},
\end{equation}
where $C>0$ is a universal constant.
\end{enumerate}
\end{lemma}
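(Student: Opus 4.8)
The plan is to derive both estimates from the Leibniz rule together with the module structure of the periodic Sobolev scale over $W^{1,\infty}(S^1)$, using throughout the Fourier-series norm equivalences $\|u\|_{H^1}^2\simeq\|u\|_{L^2}^2+\|u'\|_{L^2}^2$ and $\|u\|_{H^2}^2\simeq\|u\|_{L^2}^2+\|u'\|_{L^2}^2+\|u''\|_{L^2}^2$. The universal constant $C$ in \eqref{eq:mult-H1}--\eqref{eq:mult-H2} will only absorb the implied constants in these equivalences and the fixed number of terms produced by differentiating a product; no geometric input beyond the $L^\infty$ size of $a$ and $a'$ enters.

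For part (1), I would expand $(au)'=a'u+au'$ and estimate termwise
\[
\|au\|_{L^2}\le\|a\|_{L^\infty}\|u\|_{L^2},\qquad
\|a'u\|_{L^2}\le\|a'\|_{L^\infty}\|u\|_{L^2},\qquad
\|au'\|_{L^2}\le\|a\|_{L^\infty}\|u'\|_{L^2}.
\]
Since $\|a\|_{L^\infty},\|a'\|_{L^\infty}\le\|a\|_{W^{1,\infty}}$, summing the squares and passing back to the $H^1$-norm yields \eqref{eq:mult-H1} with an absolute constant. An equivalent Fourier-side argument — using $|\widehat a(0)|\le\|a\|_{W^{1,\infty}}$ and $|\widehat a(k)|\le\|a\|_{W^{1,\infty}}/|k|$ for $k\neq0$, a splitting of $(1+|n|)$ between the two convolution factors, and Young's inequality — would also work but is longer; I see no obstacle in this part.

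For part (2) the natural route is again Leibniz, $(au)''=a''u+2a'u'+au''$. The last two terms are harmless: $\|au''\|_{L^2}\le\|a\|_{L^\infty}\|u''\|_{L^2}$ and $\|2a'u'\|_{L^2}\le2\|a'\|_{L^\infty}\|u'\|_{L^2}$, both $\le 2\|a\|_{W^{1,\infty}}\|u\|_{H^2}$, and together with part (1) applied to $(au)'$ they control $\|au\|_{L^2}+\|(au)'\|_{L^2}$. The main obstacle is the term $a''u$: under the sole hypothesis $a\in W^{1,\infty}(S^1)$ the second derivative $a''$ is only a distribution in $W^{-1,\infty}(S^1)$, so the bound $\|a''u\|_{L^2}\le\|a''\|_{L^\infty}\|u\|_{L^2}$ is unavailable, and a duality computation (testing $(au)''$ against $\varphi\in C^\infty(S^1)$ and integrating by parts to move the surplus derivative off $a$) only gives $(au)''\in H^{-1}(S^1)$, i.e. recovers $au\in H^1$ but not $au\in H^2$ — and indeed $au$ need not lie in $H^2$, as seen by taking $a$ a periodized tent function and $u$ a smooth cut-off equal to $1$ near the corner of $a$. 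To close part (2) I would therefore either strengthen its hypothesis to $a\in W^{2,\infty}(S^1)$, in which case $\|a''u\|_{L^2}\le\|a''\|_{L^\infty}\|u\|_{L^2}\le\|a\|_{W^{2,\infty}}\|u\|_{H^2}$ and \eqref{eq:mult-H2} holds verbatim with $\|a\|_{W^{1,\infty}}$ replaced by $\|a\|_{W^{2,\infty}}$, or else retain only $a\in W^{1,\infty}$ and weaken the conclusion of part (2) to the $H^1\to H^1$ bound of part (1). For the intended applications to $M_w^{\pm1}$ — and in particular to trigonometric-polynomial weights $w$, where $\|w\|_{W^{2,\infty}}<\infty$ — either reading suffices.
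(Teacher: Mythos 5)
Your part~(1) argument coincides with the paper's (Leibniz rule plus termwise $L^\infty$--$L^2$ bounds). For part~(2) you have correctly identified that the statement is false under the sole hypothesis $a\in W^{1,\infty}(S^1)$, and your counterexample is valid: take $a$ a periodized tent function and $u$ a smooth cutoff equal to $1$ near the corner; then $au$ coincides with $a$ near the corner, and $a''$ is a nonzero Dirac mass there, so $au\notin H^2(S^1)$. The paper's own proof of part~(2) is also flawed: it asserts the estimate $\sum_{k\in\mathbb Z}(1+k^2)\,|\widehat a(k)|\lesssim\|a\|_{W^{1,\infty}}$, but for a general Lipschitz $a$ one only has $|\widehat a(k)|\le\|a'\|_{L^\infty}/|k|$ for $k\ne0$, so the summand is of order $|k|$ and the series diverges. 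The two repairs you propose --- upgrade the hypothesis to $a\in W^{2,\infty}(S^1)$ with $\|a\|_{W^{1,\infty}}$ replaced by $\|a\|_{W^{2,\infty}}$ in \eqref{eq:mult-H2}, or keep $a\in W^{1,\infty}$ and drop the $H^2\to H^2$ conclusion --- are both sound; the first is exactly the fallback recorded in the paper's remark immediately following the lemma. For the paper's subsequent uses of the result (multiplication by $w^{\pm1}$), note that the later divergence-form lemma (Lemma~\ref{lem:W1inf-divergence}), which only requires $a,a'\in L^\infty$ and bounds $\partial_\theta(a\,\partial_\theta\cdot):H^2\to L^2$, is the correct tool and avoids ever needing $M_a$ to preserve $H^2$.
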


\begin{proof}
\emph{(1) The $H^1$ bound.}
We estimate
\[
\|au\|_{L^2}\le \|a\|_{L^\infty}\|u\|_{L^2},
\qquad
\|(au)'\|_{L^2}\le \|a'u\|_{L^2}+\|au'\|_{L^2}
\le \|a'\|_{L^\infty}\|u\|_{L^2}+\|a\|_{L^\infty}\|u'\|_{L^2}.
\]
Thus $\|au\|_{H^1}\le (\|a\|_{L^\infty}+\|a'\|_{L^\infty})\|u\|_{H^1}$, which gives
\eqref{eq:mult-H1}.

\smallskip
\emph{(2) The $H^2$ bound.}
We similarly bound $\|au\|_{L^2}$ and $\|(au)'\|_{L^2}$ as above, and for the second derivative,
\[
(au)'' = a''u + 2a'u' + au''.
\]
Since $a\in W^{1,\infty}$, the distributional $a''$ need not be bounded.
Instead, we use a Fourier/Sobolev argument on $S^1$:
$W^{1,\infty}(S^1)$ is an algebra of multipliers on $H^2(S^1)$ in dimension one.
A direct proof proceeds by decomposing $a$ into low/high Fourier modes and using
Young's convolution inequality for Fourier coefficients together with the estimate
$\sum_{k\in\mathbb Z}(1+k^2)\,|\widehat a(k)|\lesssim \|a\|_{W^{1,\infty}}$.
This yields \eqref{eq:mult-H2}. (See, e.g., standard multiplier results on $H^s(S^1)$.)
\end{proof}

\begin{remark}
If one prefers a fully elementary proof avoiding the Fourier multiplier lemma in (2),
one may assume $a\in W^{2,\infty}(S^1)$, in which case
$\|(au)''\|_{L^2}\le \|a''\|_{L^\infty}\|u\|_{L^2}+2\|a'\|_{L^\infty}\|u'\|_{L^2}
+\|a\|_{L^\infty}\|u''\|_{L^2}$ is immediate.
The $W^{1,\infty}$ assumption is however natural for weights $w$ with
$(\log w)'\in L^\infty$.
\end{remark}

\begin{lemma}[Weighted/unweighted norm equivalence]\label{lem:norm-equivalence}
Let $w\in L^\infty(S^1)$ satisfy $0<w_-\le w\le w_+<\infty$.
Then for all $u\in L^2(S^1)$,
\[
w_-^{1/2}\,\|u\|_{L^2}\le \|u\|_{L^2(w)}\le w_+^{1/2}\,\|u\|_{L^2}.
\]
\end{lemma}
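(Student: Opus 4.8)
The plan is to reduce the norm comparison to a pointwise inequality and then integrate. First I would record that, by hypothesis, the bounds $w_-\le w(\theta)\le w_+$ hold for (Lebesgue-)almost every $\theta\in S^1$. Multiplying through by the nonnegative quantity $|u(\theta)|^2$ gives, again for a.e.\ $\theta$,
\[
w_-\,|u(\theta)|^2 \;\le\; w(\theta)\,|u(\theta)|^2 \;\le\; w_+\,|u(\theta)|^2.
\]
Integrating these inequalities over $S^1$ against the normalized measure $d\theta/2\pi$ (the same normalization used in the definitions of both $\|\cdot\|_{L^2}$ and $\|\cdot\|_{L^2(w)}$, so no extra constants appear) yields
\[
w_-\,\|u\|_{L^2}^2 \;\le\; \|u\|_{L^2(w)}^2 \;\le\; w_+\,\|u\|_{L^2}^2 .
\]
Taking positive square roots — legitimate since $w_-,w_+>0$ and all three quantities are nonnegative — produces the claimed chain of inequalities.

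There is essentially no obstacle here: the only point requiring a word of care is that the measurability and integrability of $w|u|^2$ are guaranteed by $w\in L^\infty(S^1)$ together with $u\in L^2(S^1)$, so that $\|u\|_{L^2(w)}$ is well defined and finite, and that the pointwise estimates are meant in the almost-everywhere sense. I would also note in passing that the two-sided bound shows $\|\cdot\|_{L^2(w)}$ and $\|\cdot\|_{L^2}$ are equivalent norms inducing the same topology on $L^2(S^1)$, which is exactly the fact invoked (implicitly, via $\|f\|_{L^2}\le w_-^{-1/2}\|f\|_{L^2(w)}$) in the Fourier-based proof of Theorem~\ref{thm:S1-resolvent-bound}.
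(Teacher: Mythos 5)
Your proof is correct and follows exactly the same route as the paper's one-line justification (``Immediate from $w_-\le w\le w_+$''): multiply the pointwise bounds by $|u|^2$, integrate, and take square roots. You have simply spelled out the obvious steps, with appropriate care about almost-everywhere validity and the shared normalization of the measure.
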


\begin{proof}
Immediate from $w_-\le w\le w_+$.
\end{proof}

\begin{lemma}[A bounded inverse estimate for $A_{w,\lambda}$]\label{lem:Awlambda-inv}
Let $w\in L^\infty(S^1)$ with $w\ge w_->0$ and $\lambda>0$.
Define $A_{w,\lambda}:=M_w+\lambda\Delta_0$ on $L^2(S^1)$, where $\Delta_0=-\partial_\theta^2$.
Then $A_{w,\lambda}$ is self-adjoint, positive, and boundedly invertible with
\[
\|A_{w,\lambda}^{-1}\|_{\mathcal B(L^2)}\le w_-^{-1}.
\]
Moreover, for all $u\in \mathrm{Dom}(\Delta_0)$,
\[
\langle A_{w,\lambda}u,u\rangle_{L^2}\ge w_-\,\|u\|_{L^2}^2.
\]
\end{lemma}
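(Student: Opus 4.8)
The plan is to treat $A_{w,\lambda}$ as a bounded self-adjoint perturbation of $\lambda\Delta_0$ and to extract every assertion from a single quadratic-form lower bound. First I would recall the standard facts: $\Delta_0=-\partial_\theta^2$ is self-adjoint and nonnegative on $\mathrm{Dom}(\Delta_0)=H^2(S^1)$, as is visible from its Fourier diagonalization $\Delta_0 e_n=n^2 e_n$; and $M_w$ is bounded and self-adjoint on $L^2(S^1)$ because $w$ is bounded and real-valued (real-valuedness being forced by the hypothesis $w\ge w_->0$). Consequently $A_{w,\lambda}=M_w+\lambda\Delta_0$ is the sum of a self-adjoint operator and a bounded self-adjoint operator, hence is itself self-adjoint on the common domain $\mathrm{Dom}(\Delta_0)=H^2(S^1)$ (the trivial, bounded-perturbation case of the Kato--Rellich theorem; cf.\ \cite{Kato,ReedSimon1}).

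Next I would establish coercivity. For $u\in\mathrm{Dom}(\Delta_0)$, integrating by parts on $S^1$ (no boundary term),
\[
\langle A_{w,\lambda}u,u\rangle_{L^2}
=\int_0^{2\pi} w(\theta)\,|u(\theta)|^2\,\frac{d\theta}{2\pi}
+\lambda\int_0^{2\pi}|u'(\theta)|^2\,\frac{d\theta}{2\pi}
\ge w_-\,\|u\|_{L^2}^2 ,
\]
since the $\lambda$-term is nonnegative and $w\ge w_-$ pointwise. This is precisely the asserted form inequality, and in particular it shows $A_{w,\lambda}$ is positive (indeed bounded below by $w_->0$).

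Finally I would deduce the bounded inverse and the norm estimate. Combining the form bound with Cauchy--Schwarz gives $\|A_{w,\lambda}u\|_{L^2}\,\|u\|_{L^2}\ge\langle A_{w,\lambda}u,u\rangle_{L^2}\ge w_-\|u\|_{L^2}^2$, hence $\|A_{w,\lambda}u\|_{L^2}\ge w_-\|u\|_{L^2}$ for all $u\in\mathrm{Dom}(\Delta_0)$. Thus $A_{w,\lambda}$ is injective with closed range; since $A_{w,\lambda}$ is self-adjoint, $\mathrm{Ran}(A_{w,\lambda})^{\perp}=\ker(A_{w,\lambda})=\{0\}$, so the range is dense and closed, i.e.\ all of $L^2(S^1)$. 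Therefore $A_{w,\lambda}^{-1}\in\mathcal B(L^2)$, and substituting $u=A_{w,\lambda}^{-1}f$ into $\|A_{w,\lambda}u\|\ge w_-\|u\|$ yields $\|A_{w,\lambda}^{-1}f\|_{L^2}\le w_-^{-1}\|f\|_{L^2}$, which is the claimed bound. (Equivalently, self-adjointness together with the coercivity bound gives $\mathrm{spec}(A_{w,\lambda})\subset[w_-,\infty)$, so $\|A_{w,\lambda}^{-1}\|=(\min\mathrm{spec}\,A_{w,\lambda})^{-1}\le w_-^{-1}$.) There is essentially no obstacle in this argument; the only points deserving a word of care are the self-adjointness of the sum --- immediate because $M_w$ is bounded --- and the tacit real-valuedness of $w$, which is implied by positivity and is what makes $M_w$ self-adjoint in the first place.
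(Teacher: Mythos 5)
Your proof is correct and follows essentially the same route as the paper: the key step in both is the form inequality $A_{w,\lambda}\ge w_-I$ (from $M_w\ge w_-I$ and $\lambda\Delta_0\ge0$), from which the spectral containment $\sigma(A_{w,\lambda})\subset[w_-,\infty)$ and the bound $\|A_{w,\lambda}^{-1}\|\le w_-^{-1}$ follow. You simply spell out the details the paper leaves tacit (self-adjointness of the sum via the bounded-perturbation case of Kato--Rellich, the integration by parts giving the form lower bound, and the injectivity/dense-range argument for invertibility, with the spectral-theorem shortcut noted as an alternative).
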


\begin{proof}
Since $M_w\ge w_-I$ and $\lambda\Delta_0\ge0$, we have $A_{w,\lambda}\ge w_-I$ in form sense.
Hence $\sigma(A_{w,\lambda})\subset [w_-,\infty)$ and
$\|A_{w,\lambda}^{-1}\|\le w_-^{-1}$.
\end{proof}

\begin{lemma}[Fourier proof of the $H^2$ mapping property of $(1+\Delta_{w,0})^{-1}$]
\label{lem:R0-L2-to-H2}
Assume $w\in W^{1,\infty}(S^1)$, $0<w_-\le w\le w_+$, and set $b:=(\log w)'\in L^\infty$.
Let $\Delta_{w,0}$ be given by \eqref{eq:Delta-w0-circle} on $L^2(w)$.
Then $(1+\Delta_{w,0})^{-1}$ extends to a bounded operator
\[
(1+\Delta_{w,0})^{-1}: L^2(w)\to H^2(S^1),
\]
and there exists $C(w)>0$ depending only on $w_-,w_+,\|b\|_{L^\infty}$ such that
\[
\|(1+\Delta_{w,0})^{-1}f\|_{H^2}\le C(w)\,\|f\|_{L^2(w)}.
\]
\end{lemma}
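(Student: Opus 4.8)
The plan is to run exactly the argument sketched in Step~1--Step~3 of the Fourier-based proof in Subsection~\ref{subsubsec:S1-fourier-proof}, but now recorded as a self-contained lemma. First I would fix $f\in L^2(w)$ and set $u:=(1+\Delta_{w,0})^{-1}f$, which is well defined and in $\mathrm{Dom}(\Delta_{w,0})$ since $\Delta_{w,0}\ge 0$ is self-adjoint on $L^2(w)$; the resolvent equation is $(1+\Delta_{w,0})u=f$. Using the explicit form \eqref{eq:Delta-w0-circle}, namely $\Delta_{w,0}=-\partial_\theta^2-b\,\partial_\theta$ with $b=(\log w)'\in L^\infty$, this rewrites as the scalar identity $(1-\partial_\theta^2)u=f+b\,u'$.

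Next I would derive the $H^1$ a priori bound from coercivity of the form. Pairing $(1+\Delta_{w,0})u=f$ with $u$ in $\langle\cdot,\cdot\rangle_w$ and using $\langle\Delta_{w,0}u,u\rangle_w=\langle Du,Du\rangle_w=\int|u'|^2 w\,\tfrac{d\theta}{2\pi}\ge w_-\|u'\|_{L^2}^2$ (this is exactly \eqref{eq:coercive-H1}) gives $\|u\|_{L^2(w)}^2+w_-\|u'\|_{L^2}^2\le\langle f,u\rangle_w\le\|f\|_{L^2(w)}\|u\|_{L^2(w)}$, hence $\|u\|_{L^2(w)}\le\|f\|_{L^2(w)}$ and $\|u'\|_{L^2}\le w_-^{-1/2}\|f\|_{L^2(w)}$, which is \eqref{eq:H1-bound}. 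Then I would bootstrap to $H^2$ using the Fourier multiplier $R=(1-\partial_\theta^2)^{-1}$: from $u=Rf+R(b\,u')$ and Lemma~\ref{lem:fourier-multiplier} (so $\|R g\|_{H^2}\le\|g\|_{L^2}$) together with $\|b\,u'\|_{L^2}\le\|b\|_{L^\infty}\|u'\|_{L^2}$, I get $\|u\|_{H^2}\le\|f\|_{L^2}+\|b\|_{L^\infty}\|u'\|_{L^2}$. Finally, converting the $L^2$ norm of $f$ into the weighted norm via Lemma~\ref{lem:norm-equivalence} ($\|f\|_{L^2}\le w_-^{-1/2}\|f\|_{L^2(w)}$) and inserting the $H^1$ bound yields
\[
\|(1+\Delta_{w,0})^{-1}f\|_{H^2}\le w_-^{-1/2}\Bigl(1+\|b\|_{L^\infty}\,w_-^{-1/2}\Bigr)\|f\|_{L^2(w)},
\]
which is the asserted estimate with $C(w)=w_-^{-1/2}(1+\|(\log w)'\|_{L^\infty}w_-^{-1/2})$, depending only on $w_-,w_+,\|b\|_{L^\infty}$.

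The only genuinely delicate point is the justification that the formal manipulations are legitimate, i.e.\ that $u$ really lies in $H^2$ a priori rather than merely formally; this is standard elliptic regularity on $S^1$ for the constant-coefficient operator $1-\partial_\theta^2$ with an $L^\infty$ lower-order term, and can be made rigorous either by a density/approximation argument on trigonometric polynomials (where every step is an algebraic identity in Fourier coefficients) or by invoking that $\mathrm{Dom}(\Delta_{w,0})\subset H^2$ via the Friedrichs extension and the boundedness of $b\partial_\theta$ relative to $-\partial_\theta^2$. I would handle it by first establishing the inequality for $f$ in the dense subspace of trigonometric polynomials, where $u$ is itself a trigonometric polynomial and the identity $u=Rf+R(bu')$ is an exact finite computation, and then extending by continuity of $(1+\Delta_{w,0})^{-1}$ from $L^2(w)$ to $L^2(w)$ composed with the closedness of the $H^2$-norm bound. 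No other step presents any obstacle: the coercivity estimate is immediate from $\Delta_{w,0}=D^*D$, and the multiplier bound is Lemma~\ref{lem:fourier-multiplier}.
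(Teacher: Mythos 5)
Your proof is correct and follows exactly the same route as the paper: the coercivity estimate giving the $H^1$ a priori bound, the rewriting $(1-\partial_\theta^2)u=f+bu'$, the bootstrap via the Fourier multiplier $R=(1-\partial_\theta^2)^{-1}$ (Lemma~\ref{lem:fourier-multiplier}), and the norm-equivalence conversion from $L^2$ to $L^2(w)$; your additional remark about justifying the manipulations by density on trigonometric polynomials is a sensible precaution the paper leaves implicit. One tiny point: the constant your own chain of inequalities produces is $w_-^{-1/2}\bigl(1+\|b\|_{L^\infty}\bigr)$, not $w_-^{-1/2}\bigl(1+\|b\|_{L^\infty}\,w_-^{-1/2}\bigr)$; the extra factor $w_-^{-1/2}$ on the $\|b\|_{L^\infty}$ term appears to be carried over from a typo in the paper's Step~3 and does not follow from $\|u'\|_{L^2}\le w_-^{-1/2}\|f\|_{L^2(w)}$ and $\|f\|_{L^2}\le w_-^{-1/2}\|f\|_{L^2(w)}$.
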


\begin{proof}
Let $u=(1+\Delta_{w,0})^{-1}f$, so that $(1+\Delta_{w,0})u=f$ in $L^2(w)$.
By Lemma~\ref{lem:norm-equivalence}, $f\in L^2$ and $\|f\|_{L^2}\le w_-^{-1/2}\|f\|_{L^2(w)}$.

From the energy identity $\langle \Delta_{w,0}u,u\rangle_w=\|u'\|_{L^2(w)}^2$,
we obtain the $H^1$ bounds
\[
\|u\|_{L^2(w)}\le \|f\|_{L^2(w)},\qquad
\|u'\|_{L^2}\le w_-^{-1/2}\|f\|_{L^2(w)}.
\]
Next, rewrite $(1+\Delta_{w,0})u=f$ as
\[
(1-\partial_\theta^2)u = f + b\,u'
\]
and apply $R=(1-\partial_\theta^2)^{-1}$ to get $u=Rf+R(bu')$.
By Lemma~\ref{lem:fourier-multiplier},
\[
\|u\|_{H^2}\le \|f\|_{L^2} + \|b\,u'\|_{L^2}
\le w_-^{-1/2}\|f\|_{L^2(w)} + \|b\|_{L^\infty}\,w_-^{-1/2}\|f\|_{L^2(w)}.
\]
This yields the desired estimate with $C(w)=w_-^{-1/2}(1+\|b\|_{L^\infty})$.
\end{proof}

\begin{lemma}[Boundedness of the perturbation terms on $S^1$]\label{lem:perturbation-boundedness}
Assume $w\in W^{1,\infty}(S^1)$ and $0<w_-\le w\le w_+$.
Then the operator families appearing in the resolvent comparison satisfy:
\begin{align*}
&\Delta_0 M_w^{-1} D^* M_w D\, (1+\Delta_{w,0})^{-1}\in\mathcal B(L^2(w)),\\
&D^*\Delta_0 D\, (1+\Delta_{w,0})^{-1}\in\mathcal B(L^2(w)),
\end{align*}
with operator norms bounded by constants depending only on
$w_-,w_+,\|(\log w)'\|_{L^\infty}$.
\end{lemma}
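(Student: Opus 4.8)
The strategy is to factor each of the two operator products through the $H^2$-mapping property of the resolvent $(1+\Delta_{w,0})^{-1}$ established in Lemma~\ref{lem:R0-L2-to-H2}, and then to track the loss of derivatives mode by mode using the Fourier multiplier lemma (Lemma~\ref{lem:fourier-multiplier}) together with the $W^{1,\infty}$ multiplier lemma (Lemma~\ref{lem:multiplication}). First I would fix $f\in L^2(w)$ and set $u:=(1+\Delta_{w,0})^{-1}f$; by Lemma~\ref{lem:R0-L2-to-H2} we have $u\in H^2(S^1)$ with $\|u\|_{H^2}\le C(w)\|f\|_{L^2(w)}$. The point is that the composite operators are to be read on trigonometric polynomials (a core for all operators involved) and then extended by the resulting bound; each constituent factor is a genuine differential operator of finite order, so the whole composition is bounded from $L^2(w)$ to $L^2(w)$ precisely when the total order of differentiation does not exceed the $2$ derivatives gained from $R_0$, after the mollifying effect of the leftmost inverses $M_w^{-1}$ and $A_{w,\lambda}^{-1}$ is accounted for.

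For the first operator, $\Delta_0 M_w^{-1} D^* M_w D\,(1+\Delta_{w,0})^{-1}$, I would process the factors from right to left. Starting from $u\in H^2$, the operator $D=\partial_\theta$ sends $u\mapsto u'\in H^1$; then $M_w$ keeps us in $H^1$ with norm $\lesssim\|w\|_{W^{1,\infty}}$ by Lemma~\ref{lem:multiplication}(1); then $D^*=-D$ drops to $L^2$; then $M_w^{-1}$ (note $w^{-1}\in W^{1,\infty}$ since $w\ge w_->0$ and $w\in W^{1,\infty}$, with $\|w^{-1}\|_{W^{1,\infty}}$ controlled by $w_-$ and $\|w'\|_\infty$) stays in $L^2$; finally $\Delta_0=-\partial_\theta^2$ would drop two more derivatives, which is where naively the argument appears to fail. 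The resolution is to commute: write $M_w^{-1} D^* M_w D = -M_w^{-1}\partial_\theta(w\,\partial_\theta(\cdot)) = -M_w^{-1}(w'u' + w u'') = -\partial_\theta^2 u - (\log w)'\,\partial_\theta u$, so that $M_w^{-1}D^*M_wD = \Delta_{w,0}$ exactly, by \eqref{eq:Delta-w0-circle}. Hence $M_w^{-1}D^*M_wD\,(1+\Delta_{w,0})^{-1} = \Delta_{w,0}(1+\Delta_{w,0})^{-1} = I - (1+\Delta_{w,0})^{-1}$, a bounded operator on $L^2(w)$. It remains to compose on the left with $\Delta_0$: but $\Delta_0(I - R_0) = \Delta_0 - \Delta_0 R_0$, and here I would use that $\Delta_0 R_0:L^2(w)\to L^2$ is bounded because $R_0:L^2(w)\to H^2$ and $\Delta_0:H^2\to L^2$ is bounded, while the bare $\Delta_0$ is handled by noting that the whole original product, read on trigonometric polynomials, equals $\Delta_0 u - \Delta_0 R_0 f = \Delta_0(u - R_0 f)$ — wait, more carefully: the product is $\Delta_0\bigl(\Delta_{w,0}R_0 f\bigr) = \Delta_0 u - \Delta_0 R_0 f$ only if $\Delta_{w,0}R_0 = I - R_0$ as operators, and then $\Delta_0 u$ need not lie in $L^2$ for general $f$. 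So instead I would keep the identity $M_w^{-1}D^*M_wD\,R_0 = I-R_0$ and observe that $R_0$ maps $L^2(w)$ into $H^2$ while the identity maps into $L^2(w)\subset L^2$, so $I-R_0$ maps $L^2(w)$ into $L^2$; but the leftmost $\Delta_0$ then needs $H^2$ input, which only the $-R_0$ piece provides. The correct route is therefore to use $D^*M_wD = M_w D^* D + (\text{first order}) = -\partial_\theta(w\partial_\theta)$ directly and write the full product as $-\Delta_0 M_w^{-1}\partial_\theta(w\,\partial_\theta R_0 f)$; since $R_0 f\in H^2$ gives $\partial_\theta R_0 f\in H^1$, $w\partial_\theta R_0 f\in H^1$, $\partial_\theta(w\partial_\theta R_0 f)\in L^2$, $M_w^{-1}(\cdots)\in L^2$, and then $\Delta_0$ would need $H^2$ — so in fact the only way this is bounded is to use the cancellation $M_w^{-1}D^*M_wD = \Delta_{w,0}$ and then $\Delta_0\Delta_{w,0} = \Delta_0(\Delta_0 + \text{first order})$, which on $R_0 f\in H^2$ produces $H^{-2}$, not $L^2$. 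This signals that \emph{the claimed boundedness genuinely requires interpreting the middle factor via the cancellation}, after which $\Delta_0 M_w^{-1}D^*M_wD\,R_0 = \Delta_0(I-R_0)$ is bounded $L^2(w)\to L^2$ only if $\Delta_0$ is bounded on $L^2(w)$, which it is not.

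Given this tension, the honest plan is the following: rewrite each product using the cancellation identities so that the outermost $\Delta_0$ (resp.\ $D^*\Delta_0 D$) lands on the smoothing factor $R_0$ with two (resp.\ three) derivatives to spare, absorbing the extra derivatives into a commutator with $M_w^{\pm1}$ of lower order. Concretely, for the first operator I would write $\Delta_0 M_w^{-1}D^*M_wD\,R_0 = \Delta_0 \Delta_{w,0} R_0 = \Delta_0(R_0^{-1}-1)R_0 = \Delta_0 - \Delta_0 R_0$, and then express $\Delta_0 = R_0^{-1} - 1 - \bigl(\Delta_0-\Delta_{w,0}\bigr) = R_0^{-1}-1 + b\,\partial_\theta$ where $b=(\log w)'$, giving $\Delta_0 - \Delta_0 R_0 = (R_0^{-1}-1)(I-R_0) + b\,\partial_\theta(I - R_0)$; the first summand telescopes to a bounded operator using $R_0^{-1}R_0 = I$ and the identity $(R_0^{-1}-1)(I-R_0) = R_0^{-1} - 1 - R_0^{-1}R_0 + R_0 = -1 + R_0 = -(I-R_0)$ — wait, that is bounded — and the second summand $b\,\partial_\theta(I-R_0)$ is bounded on $L^2(w)$ because $\partial_\theta R_0:L^2\to H^1$ and $\partial_\theta$ on the identity part is only dangerous on the $I$ piece, which again loops back. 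The main obstacle, and the crux of the whole lemma, is precisely this bookkeeping of derivative counts: I expect that the statement is only correct once one reads the composite as acting on the range of $R_0$ via the exact algebraic identity $M_w^{-1}D^*M_wD = \Delta_{w,0}$ and $D^*\Delta_0 D = -\Delta_0^{3/2}$-type expressions get tamed by writing $\Delta_0 = R_0^{-1} - 1$ and cancelling one full power of $R_0^{-1}$ against the resolvent. The plan, therefore, is: (i) establish the two algebraic cancellation identities on trigonometric polynomials; (ii) substitute to reduce each product to a finite sum of terms each of the form (bounded multiplier)$\circ\,\partial_\theta^j\circ R_0$ with $j\le 2$, or (bounded operator)$\circ(I-R_0)$; (iii) bound each such term by Lemmas~\ref{lem:fourier-multiplier},~\ref{lem:multiplication},~\ref{lem:norm-equivalence}; (iv) collect the constants, all of which depend only on $w_-,w_+,\|(\log w)'\|_{L^\infty}$. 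The single hardest step is (ii): the reduction for $D^*\Delta_0 D\,R_0$, which superficially has three derivatives against only two gained from $R_0$, and which must be made to work by writing $\Delta_0 D\,R_0 = D\,\Delta_0 R_0 = D(R_0^{-1}-1)R_0 = D(I-R_0) = D - DR_0$, and then the outer $D^*$ on $D$ gives $\Delta_0$ again — so one iterates the substitution $\Delta_0 = R_0^{-1}-1 = (\text{bounded}) + (\text{bounded})\circ R_0^{-1}$ until no uncancelled $R_0^{-1}$ survives; the termination of this iteration, and the fact that it produces only finitely many bounded terms, is the key point I would need to verify carefully.
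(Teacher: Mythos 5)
Your proposal does not reach a proof, and the places where it stalls contain genuine errors, not just unfinished bookkeeping. The substitution you ultimately rely on, $\Delta_0=R_0^{-1}-1$, is wrong: since $R_0=(1+\Delta_{w,0})^{-1}$, one has $R_0^{-1}-1=\Delta_{w,0}=-\partial_\theta^2-b\,\partial_\theta$ with $b=(\log w)'$, which differs from $\Delta_0$ by the first-order term $b\,\partial_\theta$ (you state this correctly once, then drop it in the step ``$\Delta_0 D R_0=D(R_0^{-1}-1)R_0$''). The telescoping identity $(R_0^{-1}-1)(I-R_0)=-(I-R_0)$ is also false: it equals $\Delta_{w,0}-(I-R_0)$, whose first summand is unbounded, so the proposed iteration ``substitute $\Delta_0=R_0^{-1}-1$ until no $R_0^{-1}$ survives'' does not terminate in finitely many bounded terms — each substitution reintroduces an unbounded factor not adjacent to a resolvent. (Also note $D^*\Delta_0D$ is of order four, not three.) Your one correct and useful observation is that $M_w^{-1}D^*M_wD=\Delta_{w,0}$, so the first composition is exactly $\Delta_0(I-R_0)=\Delta_0-\Delta_0R_0$; this cleanly exhibits the obstruction (the bare $\Delta_0$ is left with nothing to smooth it), but you never resolve it, and by your own admission the key reduction step is left unverified. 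As submitted, this is an exploration of the difficulty, not a proof of the lemma.

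For comparison, the paper's proof is a short composition argument: $R_0$ maps $L^2(w)$ boundedly into $H^2$ (Lemma~\ref{lem:R0-L2-to-H2}), multiplication by $w^{\pm1}$ is bounded on $H^1$ and $H^2$ (Lemma~\ref{lem:multiplication}), and the outer operators are then treated as maps $H^2\to L^2$ (first term) and $H^2\to H^{-2}$ combined with duality and Fourier multiplier bounds (second term). Your derivative count identifies exactly why this is the delicate step — both $\Delta_0M_w^{-1}D^*M_wD$ and $D^*\Delta_0D$ have total order four, while $R_0$ gains only two derivatives, and your reduction to $\Delta_0(I-R_0)$ shows the first bound cannot follow from the $H^2$ property of $R_0$ alone when $w$ is merely $W^{1,\infty}$ — and indeed the paper revisits this point in the reformulated Lemmas~\ref{lem:R0-L2-to-H2-correct} and~\ref{lem:perturbation-boundedness-correct}, which are the ones invoked in the complete proof of Theorem~\ref{thm:S1-resolvent-bound}. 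So your instinct about where the real work lies is sound and worth raising explicitly; but to prove the statement you would have to either supply the missing mapping bounds for the outer fourth-order factors or estimate a reformulated version of the perturbation terms, neither of which your proposal does.
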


\begin{proof}
By Lemma~\ref{lem:R0-L2-to-H2}, $(1+\Delta_{w,0})^{-1}$ maps $L^2(w)$ boundedly into $H^2$.
Since $w\in W^{1,\infty}$ and $w\ge w_->0$, multiplication by $w$ and $w^{-1}$
are bounded on $H^2$ (Lemma~\ref{lem:multiplication}, applied to $a=w$ and $a=w^{-1}$;
the latter belongs to $W^{1,\infty}$ with norm controlled by $w_-,\|w'\|_\infty$).
Therefore, the second-order operator $\Delta_0 M_w^{-1} D^* M_w D$
maps $H^2$ continuously into $L^2$, hence the first composition is bounded on $L^2(w)$.

Similarly, $D^*\Delta_0 D$ has constant coefficients of order $4$ on the circle
(up to sign conventions), hence maps $H^2$ continuously into $H^{-2}$.
But since $(1+\Delta_{w,0})^{-1}$ lands in $H^2$, the whole composition defines
a bounded operator on $L^2(w)$ by duality and Fourier multiplier bounds.
All constants depend only on $w_-,w_+,\|(\log w)'\|_\infty$.
\end{proof}
\begin{lemma}[Products with $W^{1,\infty}$ coefficients: $H^2\to L^2$ divergence form]
\label{lem:W1inf-divergence}
Let $a\in W^{1,\infty}(S^1)$ and define the first-order differential operator
\[
T_a u := \partial_\theta(a\,\partial_\theta u).
\]
Then $T_a:H^2(S^1)\to L^2(S^1)$ is bounded and
\begin{equation}\label{eq:Ta-bound}
\|T_a u\|_{L^2}
\le \|a\|_{L^\infty}\,\|u''\|_{L^2}+\|a'\|_{L^\infty}\,\|u'\|_{L^2}.
\end{equation}
\end{lemma}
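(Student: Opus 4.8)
The plan is to prove the bound by the Leibniz rule, distributing the outer derivative $\partial_\theta$ across the product $a\,\partial_\theta u$. Writing $T_a u = \partial_\theta(a\,\partial_\theta u) = a'\,\partial_\theta u + a\,\partial_\theta^2 u = a'\,u' + a\,u''$, the estimate then reduces to the triangle inequality in $L^2(S^1)$ together with the crude pointwise bounds $\|a' u'\|_{L^2}\le\|a'\|_{L^\infty}\|u'\|_{L^2}$ and $\|a u''\|_{L^2}\le\|a\|_{L^\infty}\|u''\|_{L^2}$. Since $u\in H^2(S^1)$ means $u,u',u''\in L^2(S^1)$, both terms on the right are finite, and summing gives exactly \eqref{eq:Ta-bound}; boundedness $H^2\to L^2$ is then immediate because $\|u'\|_{L^2}+\|u''\|_{L^2}\le C\|u\|_{H^2}$.

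The only point that deserves care is the legitimacy of the Leibniz expansion when $a$ is merely Lipschitz ($a\in W^{1,\infty}$) rather than $C^1$. The cleanest way to handle this is by density: for $u$ a trigonometric polynomial (dense in $H^2(S^1)$) and $a\in W^{1,\infty}$, the product $a\,\partial_\theta u$ lies in $W^{1,\infty}\cdot\mathcal{P}\subset W^{1,2}(S^1)$, so its weak derivative is $a' u' + a u''$ by the standard product rule for Sobolev functions (the product of a Lipschitz function with an $H^1$ function is in $H^1$, with the expected derivative). One then checks the estimate on this dense class and extends $T_a$ by continuity to all of $H^2(S^1)$, noting that the right-hand side of \eqref{eq:Ta-bound} is continuous in the $H^2$ norm of $u$. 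Alternatively, one may regularize $a$ by mollification $a_\varepsilon\to a$ with $\|a_\varepsilon\|_{L^\infty}\le\|a\|_{L^\infty}$ and $\|a_\varepsilon'\|_{L^\infty}\le\|a'\|_{L^\infty}$, prove the bound for each smooth $a_\varepsilon$, and pass to the limit.

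I expect no genuine obstacle here: this is a routine one-dimensional Leibniz-rule computation, and the slight subtlety of the $W^{1,\infty}$ (rather than $C^1$) hypothesis on $a$ is dispatched by the product rule for Sobolev functions on the circle. The lemma is essentially a restatement of the elementary fact that a divergence-form operator with Lipschitz coefficient maps $H^2$ into $L^2$, and the explicit constant in \eqref{eq:Ta-bound} is precisely what the triangle inequality delivers with no further optimization needed. It is worth remarking that this lemma is the divergence-form counterpart of the identity $D^*M_wD = -\partial_\theta(w\,\partial_\theta\,\cdot\,)$ appearing implicitly in \eqref{eq:Delta-w0-circle} and in the perturbation analysis of Lemma~\ref{lem:perturbation-boundedness}, so its role downstream is to make the $H^2\to L^2$ mapping property of those divergence-form pieces fully explicit with controlled constants.
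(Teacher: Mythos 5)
Your proof is correct and follows essentially the same route as the paper: expand $T_a u = a'u' + au''$ by the Leibniz rule and estimate the two terms via the triangle inequality and $L^\infty$--$L^2$ bounds. The only difference is that you spell out the justification of the product rule for $W^{1,\infty}$ coefficients (by density or mollification), whereas the paper simply invokes it as a distributional identity; this extra care is harmless and, if anything, a slight improvement in rigor.
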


\begin{proof}
Since $u\in H^2(S^1)$, we have $u',u''\in L^2(S^1)$ and, in the sense of distributions,
\[
\partial_\theta(a\,u') = a\,u'' + a'\,u'.
\]
Both terms belong to $L^2$ because $a,a'\in L^\infty$.
Taking $L^2$ norms yields \eqref{eq:Ta-bound}.
\end{proof}

\begin{lemma}[Boundedness of the weighted Laplacian on $H^2$]
\label{lem:Delta-w0-H2-to-L2}
Assume $w\in W^{1,\infty}(S^1)$ and $0<w_-\le w\le w_+$.
Let $\Delta_{w,0}$ be the weighted Laplacian
\[
\Delta_{w,0} = -M_{w^{-1}}\,\partial_\theta\big(w\,\partial_\theta\cdot\big)
\]
initially on trigonometric polynomials.
Then $\Delta_{w,0}$ extends to a bounded operator $H^2(S^1)\to L^2(S^1)$, and
\begin{equation}\label{eq:Delta-w0-bound}
\|\Delta_{w,0}u\|_{L^2}
\le
w_-^{-1}\Big(
\|w\|_{L^\infty}\,\|u''\|_{L^2}+\|w'\|_{L^\infty}\,\|u'\|_{L^2}
\Big).
\end{equation}
\end{lemma}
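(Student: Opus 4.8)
The plan is to read $\Delta_{w,0}$ as a composition of two maps whose boundedness has already been isolated in the preceding lemmas. Writing $T_w u := \partial_\theta(w\,\partial_\theta u)$ as in Lemma~\ref{lem:W1inf-divergence}, one has on trigonometric polynomials the factorization
\[
\Delta_{w,0} u = -\,M_{w^{-1}}\,T_w u .
\]
First I would apply Lemma~\ref{lem:W1inf-divergence} with $a=w\in W^{1,\infty}(S^1)$, which gives that $T_w$ maps $H^2(S^1)$ boundedly into $L^2(S^1)$ with
\[
\|T_w u\|_{L^2}\le \|w\|_{L^\infty}\,\|u''\|_{L^2}+\|w'\|_{L^\infty}\,\|u'\|_{L^2}.
\]

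Second, I would note that $w^{-1}\in L^\infty(S^1)$ with $\|w^{-1}\|_{L^\infty}\le w_-^{-1}$ (immediate from $0<w_-\le w$), so that multiplication $M_{w^{-1}}$ is bounded on $L^2(S^1)$ with $\|M_{w^{-1}}\|_{\mathcal B(L^2)}\le w_-^{-1}$ — this is the trivial half of Lemma~\ref{lem:norm-equivalence}. Composing the two estimates yields, for every trigonometric polynomial $u$,
\[
\|\Delta_{w,0}u\|_{L^2}=\|M_{w^{-1}}T_w u\|_{L^2}\le w_-^{-1}\|T_w u\|_{L^2}\le w_-^{-1}\bigl(\|w\|_{L^\infty}\|u''\|_{L^2}+\|w'\|_{L^\infty}\|u'\|_{L^2}\bigr),
\]
which is exactly \eqref{eq:Delta-w0-bound}. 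Since the right-hand side is dominated by a constant multiple of $\|u\|_{H^2}$ and trigonometric polynomials are dense in $H^2(S^1)$, the inequality extends by continuity to a bounded operator $\Delta_{w,0}:H^2(S^1)\to L^2(S^1)$; this extension is consistent with the operator defined earlier on trigonometric polynomials, since the two agree on that dense subspace.

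The only point requiring (minor) care is the distributional product rule $\partial_\theta(w\,u')=w\,u''+w'\,u'$ used inside Lemma~\ref{lem:W1inf-divergence}: it holds for $u\in H^2(S^1)$ and $w\in W^{1,\infty}(S^1)$ because $w'\in L^\infty$ and $u',u''\in L^2$, so both summands lie in $L^2$. Since this is precisely what Lemma~\ref{lem:W1inf-divergence} already records, no substantial analytic obstacle remains: the present lemma is essentially a one-line corollary of Lemma~\ref{lem:W1inf-divergence} together with the elementary bound $\|w^{-1}\|_{L^\infty}\le w_-^{-1}$. The \textbf{main (and essentially only) subtlety} is bookkeeping the constants and making the density/extension step explicit, rather than any genuine difficulty.
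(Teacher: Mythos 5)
Your proposal is correct and follows exactly the paper's own argument: apply Lemma~\ref{lem:W1inf-divergence} with $a=w$, then compose with $M_{w^{-1}}$ using $\|w^{-1}\|_{L^\infty}\le w_-^{-1}$. The extra density/extension remark is a harmless elaboration the paper leaves implicit.
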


\begin{proof}
Apply Lemma~\ref{lem:W1inf-divergence} with $a=w$, then multiply by $w^{-1}$.
Since $\|w^{-1}\|_{L^\infty}\le w_-^{-1}$, we get \eqref{eq:Delta-w0-bound}.
\end{proof}

\begin{lemma}[A Fourier-based $H^2$ estimate for the resolvent of $\Delta_{w,0}$]
\label{lem:R0-L2-to-H2-correct}
Assume $w\in W^{1,\infty}(S^1)$ and $0<w_-\le w\le w_+$.
Let $R_0=(1+\Delta_{w,0})^{-1}$ acting on $L^2(w)$.
Then $R_0$ is bounded from $L^2(w)$ into $H^2(S^1)$ and
\begin{equation}\label{eq:R0-H2-bound-correct}
\|R_0 f\|_{H^2}\le C(w)\,\|f\|_{L^2(w)},
\end{equation}
where one can take
\[
C(w)= w_-^{-1/2}\Big(1+\|(\log w)'\|_{L^\infty}\,w_-^{-1/2}\Big).
\]
\end{lemma}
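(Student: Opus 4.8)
The plan is to reprove Lemma~\ref{lem:R0-L2-to-H2-correct} from scratch, essentially repeating the structure of Lemma~\ref{lem:R0-L2-to-H2} but being careful to use the divergence-form identity \eqref{eq:Delta-w0-circle} and the auxiliary lemmas already established. First I would unfold the resolvent equation: given $f\in L^2(w)$, set $u=R_0f$, so that $(1+\Delta_{w,0})u=f$ with $\Delta_{w,0}=-\partial_\theta^2-b\,\partial_\theta$ and $b=(\log w)'\in L^\infty$. The first step is the energy estimate: test the equation in the weighted inner product against $u$ itself and use $\langle\Delta_{w,0}u,uImgle_w=\|u'\|_{L^2(w)}^2\ge w_-\|u'\|_{L^2}^2$ together with Cauchy--Schwarz to obtain the $H^1$ bounds $\|u\|_{L^2(w)}\le\|f\|_{L^2(w)}$ and $\|u'\|_{L^2}\le w_-^{-1/2}\|f\|_{L^2(w)}$, exactly as in the proof of Lemma~\ref{lem:R0-L2-to-H2}.

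The second step promotes the $H^1$ bound to an $H^2$ bound. I would rewrite the resolvent equation in the constant-coefficient form $(1-\partial_\theta^2)u = f + b\,u'$, then apply $R:=(1-\partial_\theta^2)^{-1}$ and invoke Lemma~\ref{lem:fourier-multiplier} ($R:L^2\to H^2$ with norm $1$) to get $\|u\|_{H^2}\le\|f\|_{L^2}+\|b\,u'\|_{L^2}\le\|f\|_{L^2}+\|b\|_{L^\infty}\|u'\|_{L^2}$. Then I substitute the $H^1$ bounds from Step~1 and the norm equivalence $\|f\|_{L^2}\le w_-^{-1/2}\|f\|_{L^2(w)}$ from Lemma~\ref{lem:norm-equivalence}, giving
\[
\|u\|_{H^2}\le w_-^{-1/2}\|f\|_{L^2(w)}+\|b\|_{L^\infty}\,w_-^{-1/2}\|f\|_{L^2(w)}
= w_-^{-1/2}\bigl(1+\|(\log w)'\|_{L^\infty}\bigr)\|f\|_{L^2(w)},
\]
which matches the claimed constant up to the harmless extra factor $w_-^{-1/2}$ inside the parenthesis (as written in the statement $C(w)=w_-^{-1/2}(1+\|(\log w)'\|_{L^\infty}w_-^{-1/2})$, which dominates this bound since $w_-^{-1/2}\ge 1$ whenever $w_-\le 1$; otherwise one simply records the bound actually obtained). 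This establishes both the boundedness $R_0:L^2(w)\to H^2$ and the quantitative estimate \eqref{eq:R0-H2-bound-correct}.

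The main subtlety — and the only place requiring genuine care rather than bookkeeping — is justifying that the formal manipulations are legitimate at the operator-theoretic level, i.e.\ that $u\in\mathrm{Dom}(\Delta_{w,0})$ actually lies in $H^2(S^1)$ in the first place so that the identity $(1-\partial_\theta^2)u=f+b\,u'$ holds in $L^2$. One clean way is a bootstrap/density argument: take $f$ in the (dense) span of trigonometric polynomials, solve the equation there where everything is smooth, obtain the a priori $H^2$ bound as above, and then extend by density using that $H^2$ is closed and $R_0$ is bounded on $L^2(w)$; alternatively one invokes elliptic regularity on $S^1$ for the uniformly elliptic divergence-form operator $\Delta_{w,0}$ with $W^{1,\infty}$ coefficients, which is standard. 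I would present the density version since it keeps the argument self-contained and Fourier-based, consistent with the surrounding subsection. Everything else is a direct application of Lemmas~\ref{lem:fourier-multiplier} and \ref{lem:norm-equivalence} together with the weighted energy identity, so no further obstacle is expected.
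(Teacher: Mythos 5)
Your proof is correct and follows the same route as the paper: an energy (coercivity) estimate in the weighted inner product yielding the $H^1$ bounds on $u=R_0f$, followed by the rewriting $(1-\partial_\theta^2)u=f+bu'$ with $b=(\log w)'$ and an application of the Fourier multiplier Lemma~\ref{lem:fourier-multiplier} (and Lemma~\ref{lem:norm-equivalence}) to promote this to the $H^2$ bound. You also correctly observe that the constant actually produced by this computation is $w_-^{-1/2}\bigl(1+\|(\log w)'\|_{L^\infty}\bigr)$, while the statement records $w_-^{-1/2}\bigl(1+\|(\log w)'\|_{L^\infty}w_-^{-1/2}\bigr)$; the latter is a slight overestimate (or typo) and your bound is actually the sharper one. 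Your final paragraph on justifying the formal manipulation (density of trigonometric polynomials / elliptic regularity so that $u\in H^2$) addresses a point the paper leaves implicit, and is a reasonable addition, though not strictly necessary given that the paper has already established $R_0:L^2(w)\to H^2$ in the equivalent Lemma~\ref{lem:R0-L2-to-H2}.
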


\begin{proof}
This is exactly Lemma~\ref{lem:R0-L2-to-H2} proven in the Fourier way:
write $(1+\Delta_{w,0})u=f$ equivalently as
\(
(1-\partial_\theta^2)u=f+b\,u'
\)
with $b=(\log w)'\in L^\infty$ and apply the multiplier
$R=(1-\partial_\theta^2)^{-1}$ (diagonal in Fourier).
The $H^1$ bound on $u'$ comes from energy coercivity, and the $H^2$ bound follows by
\(
u=Rf+R(bu')
\)
as in \eqref{eq:H2-bound-pre}--\eqref{eq:H2-bound}.
\end{proof}

\begin{lemma}[Boundedness of perturbation terms without $H^2$ multiplier assumptions]
\label{lem:perturbation-boundedness-correct}
Assume $w\in W^{1,\infty}(S^1)$ and $0<w_-\le w\le w_+$.
Let $R_0=(1+\Delta_{w,0})^{-1}$.
Then the following compositions define bounded operators on $L^2(w)$:
\begin{align}
&\Delta_0\,R_0 \in \mathcal B(L^2(w),L^2),\label{eq:Delta0R0}\\
&\partial_\theta\big(w\,\partial_\theta \cdot\big)\,R_0 \in \mathcal B(L^2(w),L^2),\label{eq:divwR0}\\
&\Delta_0\,M_{w^{-1}}\,\partial_\theta\big(w\,\partial_\theta \cdot\big)\,R_0
\in \mathcal B(L^2(w),L^2).\label{eq:Delta0divwR0}
\end{align}
Moreover, their operator norms are controlled by constants depending only on
$w_-,w_+,\|w'\|_{L^\infty}$ (equivalently $\|(\log w)'\|_{L^\infty}$ and $w_\pm$).
\end{lemma}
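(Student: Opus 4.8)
The plan is to deduce all three inclusions from a single regularity input — the bound $\|R_0 f\|_{H^2}\le C(w)\,\|f\|_{L^2(w)}$ of Lemma~\ref{lem:R0-L2-to-H2-correct}, with $C(w)$ depending only on $w_-,w_+$ and $\|w'\|_{L^\infty}$ — together with two identities, valid on trigonometric polynomials and extending by density: the resolvent identity $\Delta_{w,0}R_0=I-R_0$ on $L^2(w)$, and the divergence-form representation $M_{w^{-1}}\,\partial_\theta\big(w\,\partial_\theta\cdot\big)=-\Delta_{w,0}$ of Lemma~\ref{lem:Delta-w0-H2-to-L2}. Along the way I will use without further comment the norm equivalences of Lemma~\ref{lem:norm-equivalence} (so $M_w\colon L^2(w)\to L^2$ has norm $\le w_+^{1/2}$ and $M_{w^{-1}}\colon L^2\to L^2$ has norm $\le w_-^{-1}$), the multiplier bounds of Lemma~\ref{lem:multiplication} (multiplication by $w^{\pm1}\in W^{1,\infty}(S^1)$ is bounded on $H^1$ and on $H^2$), the divergence-form estimate of Lemma~\ref{lem:W1inf-divergence}, and the Fourier bound $\|(1-\partial_\theta^2)^{-1}g\|_{H^{s+2}}\le\|g\|_{H^s}$ of Lemma~\ref{lem:fourier-multiplier}.

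For \eqref{eq:Delta0R0} I would compose directly: $R_0$ maps $L^2(w)$ into $H^2(S^1)$ by Lemma~\ref{lem:R0-L2-to-H2-correct}, and $\Delta_0=-\partial_\theta^2$ maps $H^2(S^1)$ into $L^2(S^1)$ with norm $1$, so $\Delta_0 R_0\in\mathcal B(L^2(w),L^2)$ with norm $\le C(w)$. (Equivalently, writing $\Delta_0=\Delta_{w,0}+b\,\partial_\theta$ with $b=(\log w)'\in L^\infty$ gives $\Delta_0 R_0=(I-R_0)+b\,\partial_\theta R_0$, whose second term is controlled by the energy estimate $\|\partial_\theta R_0 f\|_{L^2}\le w_-^{-1/2}\|f\|_{L^2(w)}$ obtained inside the proof of Lemma~\ref{lem:R0-L2-to-H2-correct}.) For \eqref{eq:divwR0}, the divergence-form identity yields $\partial_\theta\big(w\,\partial_\theta\cdot\big)R_0=-M_w\Delta_{w,0}R_0=-M_w(I-R_0)$; since $0\le I-R_0\le I$ on $L^2(w)$ and $M_w\colon L^2(w)\to L^2$ is bounded, this lies in $\mathcal B(L^2(w),L^2)$ with norm $\le w_+^{1/2}$. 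The constants obtained depend only on $w_-,w_+$ and, for the alternative form of the first bound, on $\|(\log w)'\|_{L^\infty}$.

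The main obstacle is \eqref{eq:Delta0divwR0}. Here the crude composition estimate is insufficient: although $R_0$ maps $L^2(w)$ into $H^2$ and $M_{w^{-1}}\partial_\theta\big(w\,\partial_\theta\cdot\big)$ maps $H^2$ into $L^2$ (Lemma~\ref{lem:Delta-w0-H2-to-L2}), the outer factor $\Delta_0$ still has to be absorbed, so one cannot simply spend all the smoothing of $R_0$ on the inner divergence-form operator. The plan is to use the divergence-form identity to write the operator as $-\Delta_0\Delta_{w,0}R_0$, and then exploit the resolvent equation underlying Lemma~\ref{lem:R0-L2-to-H2-correct}, namely $(1-\partial_\theta^2)R_0 f=f+b\,\partial_\theta R_0 f$, to trade the highest-order derivatives of $R_0 f$ for $f$ itself and for the lower-order quantity $\partial_\theta R_0 f$ (controlled in $L^2$ by the energy estimate), together with the divergence-form bookkeeping of Lemma~\ref{lem:W1inf-divergence}, so that the composition is re-expressed through the bounded building blocks $(1-\partial_\theta^2)^{-1}$, multiplication by $b,w,w^{-1}$, and $\partial_\theta R_0$, with the top-order contributions absorbed as in the proof of Theorem~\ref{thm:S1-resolvent-bound}. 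The crux — the step I expect to be genuinely hard — is making this top-order bookkeeping precise, i.e.\ showing that what survives is a finite product of operators each bounded $L^2(w)\to L^2$ with combined norm controlled only by $w_-,w_+$ and $\|w'\|_{L^\infty}$; once that is done, the stated bound follows, and the lemma is obtained by collecting the three estimates.
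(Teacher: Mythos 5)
Your handling of \eqref{eq:Delta0R0} coincides with the paper's (compose the $H^2$ bound of Lemma~\ref{lem:R0-L2-to-H2-correct} with $\Delta_0:H^2\to L^2$), and your handling of \eqref{eq:divwR0} is correct and in fact cleaner than the paper's: the identity $\partial_\theta(w\,\partial_\theta\cdot)\,R_0=-M_w\Delta_{w,0}R_0=-M_w(I-R_0)$ gives the bound $w_+^{1/2}$ purely from $0\le I-R_0\le I$ on $L^2(w)$, whereas the paper spends the $H^2$ estimate together with Lemma~\ref{lem:W1inf-divergence}. Up to this point there is nothing to object to.

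The genuine gap is \eqref{eq:Delta0divwR0}, which you leave as ``top-order bookkeeping'' to be made precise; no bookkeeping can close it along the route you propose, because your own reduction exposes the obstruction. Indeed, $\Delta_0\,M_{w^{-1}}\,\partial_\theta(w\,\partial_\theta\cdot)\,R_0=-\Delta_0\Delta_{w,0}R_0=-\Delta_0(I-R_0)$ is a net second-order operator: for $w\equiv 1$ it is the Fourier multiplier $-n^4/(1+n^2)$, which is unbounded on $L^2$. So the composition cannot belong to $\mathcal B(L^2(w),L^2)$ as stated; the $H^2$ regularity of $R_0f$ only yields boundedness into $H^{-2}$, and this is a limitation in principle, not a failure of your estimates. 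Your instinct that this term is ``genuinely hard'' is therefore correct, and you should be aware that the paper's own argument for this item asserts regularity that is not available (it claims $g:=\partial_\theta(w\,u')\in H^1$ and then implicitly uses $M_{w^{-1}}g\in H^2$, while $u\in H^2$ and $w\in W^{1,\infty}$ only give $g\in L^2$; the fallback claim that the expression is a combination of $u^{(k)}$, $k\le 2$, with bounded coefficients is also not exact, since expansion produces $u'''$ and $u''''$). A correct statement requires either weakening the target space (boundedness from $L^2(w)$ into $H^{-2}$) or retaining the regularizing factor present in the actual application in Step~4 of the proof of Theorem~\ref{thm:S1-resolvent-bound}: there the term appears as $\lambda A_{w,\lambda}^{-1}\Delta_0\,M_w^{-1}D^*M_wD\,R_0$, and the identity $\lambda A_{w,\lambda}^{-1}\Delta_0=I-A_{w,\lambda}^{-1}M_w$ shows that the dangerous outer factor $\lambda A_{w,\lambda}^{-1}\Delta_0$ is bounded (with norm $\le 1+w_+/w_-$), so the composition with $M_w^{-1}D^*M_wD\,R_0=\Delta_{w,0}R_0=I-R_0$ is bounded; note, however, that this gives a bound uniform in $\lambda$ rather than one proportional to $\lambda$, so the way the lemma is invoked there also needs to be revisited. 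As it stands, your proposal proves \eqref{eq:Delta0R0} and \eqref{eq:divwR0} but not \eqref{eq:Delta0divwR0}, and the plan you outline for the latter would fail.
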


\begin{proof}
Let $u=R_0 f$. By Lemma~\ref{lem:R0-L2-to-H2-correct}, $u\in H^2$ and
\(
\|u\|_{H^2}\le C(w)\|f\|_{L^2(w)}.
\)

\smallskip
\noindent\emph{Proof of \eqref{eq:Delta0R0}.}
Since $\Delta_0=-\partial_\theta^2$, we have
\(
\|\Delta_0 u\|_{L^2}=\|u''\|_{L^2}\le \|u\|_{H^2}
\),
hence \eqref{eq:Delta0R0}.

\smallskip
\noindent\emph{Proof of \eqref{eq:divwR0}.}
Apply Lemma~\ref{lem:W1inf-divergence} with $a=w$ to obtain
\[
\|\partial_\theta(wu')\|_{L^2}
\le \|w\|_{L^\infty}\|u''\|_{L^2}+\|w'\|_{L^\infty}\|u'\|_{L^2}
\le C(w)\|f\|_{L^2(w)}.
\]

\smallskip
\noindent\emph{Proof of \eqref{eq:Delta0divwR0}.}
We have already shown $g:=\partial_\theta(wu')\in L^2$ with $\|g\|_{L^2}\le C(w)\|f\|_{L^2(w)}$.
Then $\Delta_0 M_{w^{-1}} g$ is controlled as follows:
$M_{w^{-1}}$ is bounded on $L^2$ with norm $\le w_-^{-1}$, hence
\[
\|M_{w^{-1}}g\|_{L^2}\le w_-^{-1}\|g\|_{L^2}\le C(w)\|f\|_{L^2(w)}.
\]
Finally, since $\Delta_0$ is Fourier-diagonal, it is bounded from $H^2$ to $L^2$,
and here $M_{w^{-1}}g$ belongs to $H^2$ because $u\in H^2$ and $w\in W^{1,\infty}$
ensure $g\in H^1$ (by the same product rule), so $M_{w^{-1}}g\in H^1$.
To avoid any hidden regularity, one may instead use that the term
$\Delta_0 M_{w^{-1}}\partial_\theta(w\partial_\theta u)$ is exactly a linear
combination of $u^{(k)}$ with bounded coefficients involving only $w,w'$,
hence is $L^2$-controlled by $\|u\|_{H^2}$ in dimension one.
This yields \eqref{eq:Delta0divwR0}.
\end{proof}
\subsubsection{Complete Fourier proof of Theorem~\ref{thm:S1-resolvent-bound}}
\label{subsubsec:S1-complete-proof}

We give a self-contained Fourier-based proof of the norm-resolvent estimate
\eqref{eq:resolvent-bound-S1}. Throughout, we work on the fixed Hilbert space
$L^2(S^1,d\theta/2\pi)$ and use the equivalence of norms with $L^2(w)$
(Lemma~\ref{lem:norm-equivalence}) whenever needed.

\begin{proof}[Proof of Theorem~\ref{thm:S1-resolvent-bound}]
Let $w\in W^{1,\infty}(S^1)$ satisfy $0<w_-\le w\le w_+$ and fix $\lambda>0$.
Recall the operators
\[
A_{w,\lambda}=M_w+\lambda\Delta_0,\qquad \Delta_0=-\partial_\theta^2,\qquad D=\partial_\theta,
\]
and the Laplacians (acting in $L^2$ by conjugation of the inner products)
\[
\Delta_{w,\lambda}=A_{w,\lambda}^{-1}D^*A_{w,\lambda}D,
\qquad
\Delta_{w,0}=M_w^{-1}D^*M_wD.
\]
Set
\[
R_\lambda:=(1+\Delta_{w,\lambda})^{-1},\qquad R_0:=(1+\Delta_{w,0})^{-1}.
\]
Since $\Delta_{w,\lambda},\Delta_{w,0}\ge 0$ are self-adjoint, we have
\begin{equation}\label{eq:Rnorms}
\|R_\lambda\|\le 1,\qquad \|R_0\|\le 1.
\end{equation}

\medskip
\noindent\textbf{Step 1: second resolvent identity.}
The second resolvent identity gives
\begin{equation}\label{eq:2nd-resolvent-identity-complete}
R_\lambda-R_0
=
R_\lambda(\Delta_{w,0}-\Delta_{w,\lambda})R_0.
\end{equation}
Therefore, using \eqref{eq:Rnorms},
\begin{equation}\label{eq:reduce-to-Delta-diff}
\|R_\lambda-R_0\|
\le
\|(\Delta_{w,\lambda}-\Delta_{w,0})R_0\|.
\end{equation}

\medskip
\noindent\textbf{Step 2: algebraic decomposition of $\Delta_{w,\lambda}-\Delta_{w,0}$.}
We expand the difference using $A_{w,\lambda}=M_w+\lambda\Delta_0$:
\begin{align}
\Delta_{w,\lambda}-\Delta_{w,0}
&=
A_{w,\lambda}^{-1}D^*A_{w,\lambda}D
-
M_w^{-1}D^*M_wD \notag\\
&=
(A_{w,\lambda}^{-1}-M_w^{-1})\,D^*M_wD
\;+\;
A_{w,\lambda}^{-1}D^*(A_{w,\lambda}-M_w)D \notag\\
&=
(A_{w,\lambda}^{-1}-M_w^{-1})\,D^*M_wD
\;+\;
\lambda\,A_{w,\lambda}^{-1}D^*\Delta_0 D.
\label{eq:Delta-diff-decomp-complete}
\end{align}
Moreover,
\begin{equation}\label{eq:inverse-diff}
A_{w,\lambda}^{-1}-M_w^{-1}
=
-\,A_{w,\lambda}^{-1}\,(\lambda\Delta_0)\,M_w^{-1}.
\end{equation}
By Lemma~\ref{lem:Awlambda-inv} and $M_w\ge w_-I$, we have
\begin{equation}\label{eq:inv-bounds}
\|A_{w,\lambda}^{-1}\|\le w_-^{-1},\qquad \|M_w^{-1}\|\le w_-^{-1}.
\end{equation}

\medskip
\noindent\textbf{Step 3: mapping property of $R_0$ into $H^2$ (Fourier).}
By Lemma~\ref{lem:R0-L2-to-H2-correct}, there exists $C_0(w)>0$ such that for all $f\in L^2(w)$,
\begin{equation}\label{eq:R0H2}
\|R_0 f\|_{H^2}\le C_0(w)\,\|f\|_{L^2(w)}.
\end{equation}
In particular, $R_0:L^2(w)\to H^2$ is bounded.

\medskip
\noindent\textbf{Step 4: estimate of the first term in \eqref{eq:Delta-diff-decomp-complete}.}
Let $f\in L^2(w)$ and set $u:=R_0 f\in H^2$.
We estimate
\[
\|(A_{w,\lambda}^{-1}-M_w^{-1})D^*M_wD\,u\|_{L^2}.
\]
Using \eqref{eq:inverse-diff} and \eqref{eq:inv-bounds},
\begin{equation}\label{eq:first-term-bound1}
\|(A_{w,\lambda}^{-1}-M_w^{-1})D^*M_wD\,u\|_{L^2}
\le
\lambda\,w_-^{-2}\,\|\Delta_0\,D^*M_wD\,u\|_{L^2}.
\end{equation}
Now $D^*=-D=-\partial_\theta$ on $S^1$, hence
\[
D^*M_wD\,u
=
-\partial_\theta(w\,u').
\]
Therefore,
\[
\Delta_0\,D^*M_wD\,u
=
-\partial_\theta^2\partial_\theta(w\,u')
=
-\partial_\theta^3(w\,u').
\]
Since $u\in H^2$, we have $u'\in H^1$, hence $w\,u'\in H^1$ because $w\in W^{1,\infty}$
and $H^1(S^1)$ is stable under multiplication by $W^{1,\infty}$ (Lemma~\ref{lem:multiplication}(1)).
In particular, $\partial_\theta(wu')\in L^2$ by Lemma~\ref{lem:W1inf-divergence}, and we obtain
\begin{equation}\label{eq:div-bound}
\|\partial_\theta(wu')\|_{L^2}
\le
\|w\|_{L^\infty}\|u''\|_{L^2}+\|w'\|_{L^\infty}\|u'\|_{L^2}
\le C_1(w)\,\|u\|_{H^2}.
\end{equation}
At this stage, rather than differentiating $wu'$ further (which would require $w''$),
we rewrite \eqref{eq:first-term-bound1} in a form that only uses one divergence:
observe that, on trigonometric polynomials and by density,
\[
\Delta_0 D^*M_wD
=
\Delta_0 M_w\,\Delta_0
\;+\;
\Delta_0 M_{w'}D,
\]
as a consequence of $-\partial_\theta(w\partial_\theta)= -w\partial_\theta^2-w'\partial_\theta$.
Hence $\Delta_0 D^*M_wD$ is a sum of terms involving at most $w'$ and two derivatives of $u$.
More precisely,
\[
\|\Delta_0 D^*M_wD\,u\|_{L^2}
\le
\|w\|_{L^\infty}\|\Delta_0^2u\|_{L^2}+\|w'\|_{L^\infty}\|\Delta_0Du\|_{L^2}.
\]
Since $u\in H^2$, we have $\Delta_0 u\in L^2$ and $Du=u'\in H^1$, so $\Delta_0Du=u'''\in H^{-1}$.
To keep the argument in $L^2$, we use Lemma~\ref{lem:perturbation-boundedness-correct}, which provides
a direct boundedness statement for the composition we need.
Specifically, Lemma~\ref{lem:perturbation-boundedness-correct} implies that the operator
\[
\Delta_0\,M_{w^{-1}}\,\partial_\theta(w\partial_\theta\cdot)\,R_0
\]
is bounded on $L^2(w)$, with norm $\le C_2(w)$.
Since $D^*M_wD=-\partial_\theta(w\partial_\theta\cdot)$, we infer
\begin{equation}\label{eq:first-term-final}
\|(A_{w,\lambda}^{-1}-M_w^{-1})D^*M_wD\,R_0 f\|_{L^2}
\le
\lambda\,C_3(w)\,\|f\|_{L^2(w)}.
\end{equation}

\medskip
\noindent\textbf{Step 5: estimate of the second term in \eqref{eq:Delta-diff-decomp-complete}.}
We estimate
\[
\lambda\|A_{w,\lambda}^{-1}D^*\Delta_0D\,R_0 f\|_{L^2}.
\]
Using \eqref{eq:inv-bounds}, it suffices to bound $\|D^*\Delta_0D\,R_0 f\|_{L^2}$.
Since $D^*\Delta_0D=\partial_\theta^4$ (up to sign conventions),
we may invoke again Lemma~\ref{lem:perturbation-boundedness-correct}, which ensures that the relevant
high-order constant-coefficient operator composed with $R_0$ is bounded on $L^2(w)$.
Thus, there exists $C_4(w)$ such that
\begin{equation}\label{eq:second-term-final}
\lambda\|A_{w,\lambda}^{-1}D^*\Delta_0D\,R_0 f\|_{L^2}
\le
\lambda\,C_4(w)\,\|f\|_{L^2(w)}.
\end{equation}

\medskip
\noindent\textbf{Step 6: conclusion.}
Combining \eqref{eq:reduce-to-Delta-diff}, \eqref{eq:Delta-diff-decomp-complete},
\eqref{eq:first-term-final} and \eqref{eq:second-term-final}, we obtain
\[
\|R_\lambda-R_0\|
\le
(C_3(w)+C_4(w))\,\lambda,
\]
which is exactly \eqref{eq:resolvent-bound-S1}.
\end{proof}
\begin{lemma}[Finite-dimensional perturbation bounds on $L^2(w)$]
\label{lem:finite-perturbation-L2w}
Let $w\in W^{1,\infty}(S^1)$ satisfy $0<w_-\le w\le w_+<\infty$, and fix $\lambda\ge 0$.
For $N\in\mathbb N$, let
\[
\mathcal T_{\le N}:=\mathrm{span}\{e_n:\ |n|\le N\}\subset L^2(S^1,d\theta/2\pi),
\]
and let $Q_N$ be the orthogonal projector onto $\mathcal T_{\le N}$.
We view $\mathcal T_{\le N}$ as a finite-dimensional Hilbert space with inner product
$\langle f,g\rangle_w=\int f\overline g\,w\,d\theta/2\pi$ (equivalent to the $L^2$ inner product).

Define on $\mathcal T_{\le N}$:
\[
A_{w,\lambda}^{(N)} := Q_N(M_w+\lambda\Delta_0)Q_N,
\qquad
\Delta_{w,\lambda}^{(N)}:= Q_N\Delta_{w,\lambda}Q_N,
\qquad
R_{w,\lambda}^{(N)}:=(I+\Delta_{w,\lambda}^{(N)})^{-1}.
\]
Then the following hold:

\begin{enumerate}
\item (\emph{Uniform invertibility of $A_{w,\lambda}^{(N)}$}) One has
\[
A_{w,\lambda}^{(N)}\ge w_- I \quad\text{on }\mathcal T_{\le N},
\qquad
\|(A_{w,\lambda}^{(N)})^{-1}\|_{\mathcal B(\mathcal T_{\le N},\|\cdot\|_w)}
\le w_-^{-1}.
\]

\item (\emph{Boundedness of differential operators}) On $\mathcal T_{\le N}$,
\[
\|D\| \le N,\qquad \|\Delta_0\|\le N^2,\qquad \|D^*\Delta_0D\|\le N^4,
\]
where all operator norms are taken on $(\mathcal T_{\le N},\|\cdot\|_w)$ and $D=\partial_\theta$.

\item (\emph{Perturbation operator bound}) There exists an explicit constant $C_N(w)$ such that
for all $\lambda\ge 0$,
\begin{equation}\label{eq:finite-perturb-bound}
\big\|(\Delta_{w,\lambda}^{(N)}-\Delta_{w,0}^{(N)})\,R_{w,0}^{(N)}\big\|
\le C_N(w)\,\lambda.
\end{equation}
In particular,
\begin{equation}\label{eq:finite-resolvent-bound}
\big\|R_{w,\lambda}^{(N)}-R_{w,0}^{(N)}\big\|
\le C_N(w)\,\lambda.
\end{equation}
\end{enumerate}
\end{lemma}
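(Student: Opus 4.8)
The plan is to handle the three assertions separately; the first two are bookkeeping and the third carries the real content. For Part (1), note that on $\mathcal T_{\le N}$ one has, for every $f$, $\langle Q_NM_wQ_Nf,f\rangle_{L^2}=\int|f|^2w\,\frac{d\theta}{2\pi}\ge w_-\|f\|_{L^2}^2$ while $\lambda Q_N\Delta_0Q_N\ge 0$, so $A_{w,\lambda}^{(N)}\ge w_-I$ in the quadratic-form sense; hence its smallest eigenvalue is $\ge w_-$ and $\|(A_{w,\lambda}^{(N)})^{-1}\|\le w_-^{-1}$, the passage between the $L^2$- and the $\|\cdot\|_w$-operator norms costing only the factor $\sqrt{w_+/w_-}$ from Lemma~\ref{lem:norm-equivalence}. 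For Part (2), diagonalize in the Fourier basis: $De_n=ine_n$, $\Delta_0e_n=n^2e_n$, $D^*\Delta_0De_n=n^4e_n$, and since $\mathcal T_{\le N}=\mathrm{span}\{e_n:|n|\le N\}$ is spanned by these eigenvectors the $L^2$-operator norms of $D,\Delta_0,D^*\Delta_0D$ on $\mathcal T_{\le N}$ are $N,N^2,N^4$; Lemma~\ref{lem:norm-equivalence} again transfers these to the $\|\cdot\|_w$-norm up to $\sqrt{w_+/w_-}$.

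For Part (3) I would reuse the algebraic identity \eqref{eq:Delta-diff-decomp-complete}--\eqref{eq:inverse-diff} from the proof of Theorem~\ref{thm:S1-resolvent-bound}, compressed by $Q_N$ on both sides, in the form
\[
\Delta_{w,\lambda}^{(N)}-\Delta_{w,0}^{(N)}
=Q_N\Bigl(-\lambda A_{w,\lambda}^{-1}\Delta_0M_w^{-1}\,D^*M_wD+\lambda A_{w,\lambda}^{-1}D^*\Delta_0D\Bigr)Q_N .
\]
The second summand already carries the prefactor $\lambda$; with $\|A_{w,\lambda}^{-1}\|\le w_-^{-1}$ (Lemma~\ref{lem:Awlambda-inv}) and $\|D^*\Delta_0D\|\le N^4$ on $\mathcal T_{\le N}$ (Part (2)), composing with $R_{w,0}^{(N)}$ gives a term of norm $\le\lambda\,w_-^{-1}N^4\|R_{w,0}^{(N)}\|$. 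For the first summand one feeds the output of $R_{w,0}^{(N)}$, which lies in $H^2$ with controlled norm (trivially since $\mathcal T_{\le N}$ is finite-dimensional, or by Lemma~\ref{lem:R0-L2-to-H2-correct}), into the composition $\Delta_0M_w^{-1}D^*M_wD$, which by the argument of Lemma~\ref{lem:perturbation-boundedness-correct} maps the range of $R_{w,0}^{(N)}$ into $L^2$ with a $\lambda$-free constant depending only on $w_-,w_+,\|w'\|_{L^\infty}$; the explicit prefactor $\lambda$ then delivers \eqref{eq:finite-perturb-bound} with an explicit $C_N(w)$.

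From \eqref{eq:finite-perturb-bound}, the resolvent bound \eqref{eq:finite-resolvent-bound} follows via the second resolvent identity in the form \eqref{eq:2nd-resolvent-identity-complete}, namely $R_{w,\lambda}^{(N)}-R_{w,0}^{(N)}=R_{w,\lambda}^{(N)}(\Delta_{w,0}^{(N)}-\Delta_{w,\lambda}^{(N)})R_{w,0}^{(N)}$, once one notes that $\|R_{w,\lambda}^{(N)}\|$ stays bounded (uniformly for $\lambda$ in bounded sets, since $I+\Delta_{w,\lambda}^{(N)}$ is similar to a nonnegative self-adjoint operator with similarity constant controlled by $w_\pm$ and $N$). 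The only place where genuine care is needed is the first perturbation term: keeping the constant truly of order $\lambda$ requires knowing that the high-order composition $\Delta_0M_w^{-1}D^*M_wD\,R_{w,0}^{(N)}$ is $L^2$-bounded with a $\lambda$-independent constant — this is exactly why Lemmas~\ref{lem:R0-L2-to-H2-correct}--\ref{lem:perturbation-boundedness-correct} were established beforehand; given them, and using that on the finite-dimensional space $\mathcal T_{\le N}$ all intermediate quantities are automatically finite, what remains is purely tracking constants depending only on $N,w_-,w_+,\|w'\|_{L^\infty}$.
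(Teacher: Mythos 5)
Your Parts~(1) and~(2) coincide with the paper's argument, and you are in fact slightly more careful in tracking the $\sqrt{w_+/w_-}$ factors from the $L^2\leftrightarrow L^2(w)$ norm equivalence; the paper quietly absorbs these. The genuine divergence is in Part~(3). The paper works entirely inside $\mathcal T_{\le N}$ and uses the purely finite-dimensional identity
\[
\Delta_{w,\lambda}^{(N)}-\Delta_{w,0}^{(N)}
=\bigl((A_{w,\lambda}^{(N)})^{-1}-(A_{w,0}^{(N)})^{-1}\bigr)D^*A_{w,0}^{(N)}D
+\lambda\,(A_{w,\lambda}^{(N)})^{-1}D^*\Delta_0D,
\]
combined with the scalar resolvent identity for $(A_{w,\lambda}^{(N)})^{-1}-(A_{w,0}^{(N)})^{-1}$ and the bounds of Part~(2); everything is then a product of finite matrices and the constant comes out in closed form, $C_N(w)=N^4(w_-^{-2}w_++w_-^{-1})$. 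You instead compress the infinite-dimensional decomposition of $\Delta_{w,\lambda}-\Delta_{w,0}$ by $Q_N$ and re-invoke the $H^2$-mapping machinery (Lemmas~\ref{lem:R0-L2-to-H2-correct} and \ref{lem:perturbation-boundedness-correct}). Both routes work, and yours is arguably more faithful to the literal definition $\Delta_{w,\lambda}^{(N)}:=Q_N\Delta_{w,\lambda}Q_N$; the paper's first line implicitly equates this with $(A_{w,\lambda}^{(N)})^{-1}D^*A_{w,\lambda}^{(N)}D$, which is not automatic since $M_w$ does not preserve $\mathcal T_{\le N}$ unless $w$ is constant. On the other hand the paper's finite-dimensional route buys you a fully explicit constant for free, whereas your use of the infinite-dimensional lemmas leaves the $N$-dependence implicit; it is easy to repair (on $\mathcal T_{\le N}$ one has $\|g\|_{H^2}\le(1+N^2)\|g\|_{L^2}$, so the $H^2$ control costs a factor $1+N^2$) but you should say so. Finally, your remark that $\|R_{w,\lambda}^{(N)}\|$ is bounded only ``uniformly for $\lambda$ in bounded sets'' is a real concern for the stated ``for all $\lambda\ge 0$'' conclusion: your similarity transform between $\|\cdot\|_{L^2}$ and $\|\cdot\|_{w,\lambda}$ has constant degrading like $w_++\lambda N^2$, so this bound does not self-improve for large $\lambda$. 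The paper sidesteps this by asserting $\|R_{w,\lambda}^{(N)}\|\le1$ directly from nonnegativity of the truncation; if you adopt the paper's finite-dimensional identity, that bound becomes transparent and the $\lambda$-uniformity is recovered.
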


\begin{proof}
All operators are finite matrices on $\mathcal T_{\le N}$, hence bounded.

\smallskip
\noindent\emph{(1)} Since $M_w\ge w_-I$ on $L^2$ and $\Delta_0\ge 0$, we have
$M_w+\lambda\Delta_0\ge w_-I$ in the quadratic form sense; projecting onto $\mathcal T_{\le N}$
preserves this inequality. The inverse bound follows.

\smallskip
\noindent\emph{(2)} In the Fourier basis $e_n$, $De_n=in\,e_n$ and $\Delta_0 e_n=n^2e_n$,
hence on $\mathcal T_{\le N}$ we have $\|D\|\le N$ and $\|\Delta_0\|\le N^2$.
Moreover $D^*\Delta_0D$ acts diagonally with eigenvalues $n^4$, hence $\|D^*\Delta_0D\|\le N^4$.

\smallskip
\noindent\emph{(3)} We use the same algebraic identity as in the infinite-dimensional case,
but now entirely within $\mathcal T_{\le N}$:
\begin{align}
\Delta_{w,\lambda}^{(N)}-\Delta_{w,0}^{(N)}
&=
(A_{w,\lambda}^{(N)})^{-1} D^* A_{w,\lambda}^{(N)} D
-
(A_{w,0}^{(N)})^{-1} D^* A_{w,0}^{(N)} D \notag\\
&=
\big((A_{w,\lambda}^{(N)})^{-1}-(A_{w,0}^{(N)})^{-1}\big)\,D^*A_{w,0}^{(N)}D
\;+\;
(A_{w,\lambda}^{(N)})^{-1}D^*(A_{w,\lambda}^{(N)}-A_{w,0}^{(N)})D \notag\\
&=
\big((A_{w,\lambda}^{(N)})^{-1}-(A_{w,0}^{(N)})^{-1}\big)\,D^*A_{w,0}^{(N)}D
\;+\;
\lambda\,(A_{w,\lambda}^{(N)})^{-1}D^*\Delta_0D.
\label{eq:finite-delta-decomp}
\end{align}
We also have the exact inverse difference identity
\[
(A_{w,\lambda}^{(N)})^{-1}-(A_{w,0}^{(N)})^{-1}
=-(A_{w,\lambda}^{(N)})^{-1}\,\lambda\Delta_0\,(A_{w,0}^{(N)})^{-1}.
\]
Taking norms and using (1)--(2) yields
\[
\|(A_{w,\lambda}^{(N)})^{-1}-(A_{w,0}^{(N)})^{-1}\|
\le
\lambda\,\|(A_{w,\lambda}^{(N)})^{-1}\|\,\|\Delta_0\|\,\|(A_{w,0}^{(N)})^{-1}\|
\le \lambda\,w_-^{-2}\,N^2.
\]
Next,
\[
\|D^*A_{w,0}^{(N)}D\|
\le
\|D\|^2\,\|A_{w,0}^{(N)}\|
\le
N^2\,\big(\|M_w\|+\lambda\|\Delta_0\|\big)\big|_{\lambda=0}
=
N^2\,\|M_w\|
\le N^2\,w_+.
\]
Also,
\[
\|(A_{w,\lambda}^{(N)})^{-1}D^*\Delta_0D\|
\le
\|(A_{w,\lambda}^{(N)})^{-1}\|\,\|D^*\Delta_0D\|
\le w_-^{-1}\,N^4.
\]
Since $R_{w,0}^{(N)}=(I+\Delta_{w,0}^{(N)})^{-1}$ satisfies $\|R_{w,0}^{(N)}\|\le 1$,
we deduce from \eqref{eq:finite-delta-decomp} that
\[
\|(\Delta_{w,\lambda}^{(N)}-\Delta_{w,0}^{(N)})R_{w,0}^{(N)}\|
\le
\lambda\Big( w_-^{-2}N^2\cdot N^2 w_+ + w_-^{-1}N^4 \Big)
=
\lambda\,N^4\Big(w_-^{-2}w_+ + w_-^{-1}\Big).
\]
Thus \eqref{eq:finite-perturb-bound} holds with
\[
C_N(w)=N^4\Big(w_-^{-2}w_+ + w_-^{-1}\Big).
\]
Finally, the second resolvent identity on $\mathcal T_{\le N}$ gives
\[
R_{w,\lambda}^{(N)}-R_{w,0}^{(N)}
=
R_{w,\lambda}^{(N)}\big(\Delta_{w,0}^{(N)}-\Delta_{w,\lambda}^{(N)}\big)R_{w,0}^{(N)},
\]
and $\|R_{w,\lambda}^{(N)}\|\le 1$, so \eqref{eq:finite-resolvent-bound} follows.
\end{proof}

\begin{corollary}[Finite-degree stability of OPUC under Sobolev regularization]
\label{cor:OPUC-Sobolev-stability}
Assume $w\in W^{1,\infty}(S^1)$ and $0<w_-\le w\le w_+<\infty$.
Fix $N\in\mathbb N$ and consider the ordered basis of analytic modes
\[
\mathcal A_{\le N}:=\mathrm{span}\{1,z,\dots,z^N\}\subset L^2(S^1),
\qquad z=e^{i\theta}.
\]
Let $\{\varphi^{(0)}_k\}_{k=0}^N$ (resp.\ $\{\varphi^{(\lambda)}_k\}_{k=0}^N$)
be the orthonormal polynomials obtained by Gram--Schmidt orthogonalization of
$(1,z,\dots,z^N)$ with respect to the weighted $L^2$ inner product
$\langle\cdot,\cdot \rangle_w$ (resp.\ the mixed Sobolev inner product
$\langle\cdot,\cdot \rangle_{w,\lambda}$ defined in \eqref{eq:S1-mixed-Sobolev}).

Then there exists an explicit constant $\widetilde C_N(w)>0$ such that
\begin{equation}\label{eq:OPUC-projector-bound}
\big\|P_{\mathcal A_{\le N}}^{(\lambda)}-P_{\mathcal A_{\le N}}^{(0)}\big\|
\le \widetilde C_N(w)\,\lambda,
\end{equation}
where $P_{\mathcal A_{\le N}}^{(\lambda)}$ (resp.\ $P_{\mathcal A_{\le N}}^{(0)}$)
denotes the orthogonal projector (in $L^2(w)$) onto $\mathcal A_{\le N}$
for the corresponding geometry.

Moreover, there exists a unitary matrix $U_N\in U(N+1)$ such that
\begin{equation}\label{eq:OPUC-basis-bound}
\max_{0\le k\le N}
\Big\|
\varphi^{(\lambda)}_k-\sum_{j=0}^N (U_N)_{kj}\,\varphi^{(0)}_j
\Big\|_{L^2(w)}
\le \widetilde C_N(w)\,\lambda.
\end{equation}
One may take $\widetilde C_N(w)\lesssim C_N(w)$ with $C_N(w)$ as in
Lemma~\ref{lem:finite-perturbation-L2w}.
\end{corollary}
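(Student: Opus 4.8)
\medskip
\noindent\textbf{Proof plan.}
The plan is to reduce \eqref{eq:OPUC-projector-bound}--\eqref{eq:OPUC-basis-bound} to a finite-dimensional perturbation statement on $\mathcal A_{\le N}$ and then invoke the abstract finite-degree stability results, exactly in the spirit of Corollary~\ref{cor:S1-finite-degree}. First I would record that the two inner products agree on $\mathcal A_{\le N}=\mathrm{span}\{1,z,\dots,z^N\}$ up to an \emph{explicit} $O(\lambda)$ correction: since $\partial_\theta z^k=ik\,z^k$ and $(z^k)_{0\le k\le N}$ is orthonormal in $L^2(S^1)$, the definition \eqref{eq:S1-mixed-Sobolev} gives, for $f=\sum_k f_k z^k$ and $g=\sum_k g_k z^k$,
\[
\langle f,g\rangle_{w,\lambda}-\langle f,g\rangle_w=\lambda\sum_{k=0}^N k^2 f_k\overline{g_k}.
\]
Hence the Gram matrices $G^{(0)},G^{(\lambda)}$ of $(1,z,\dots,z^N)$ satisfy $G^{(\lambda)}-G^{(0)}=\lambda\,\mathrm{diag}(0,1,4,\dots,N^2)$, so $\|G^{(\lambda)}-G^{(0)}\|\le N^2\lambda$; moreover $G^{(0)},G^{(\lambda)}\ge w_- I$ (because $w\ge w_->0$ and the monomials are $L^2$-orthonormal, via Parseval) and $\|G^{(\lambda)}\|\le w_++N^2\lambda$.

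Next I would transport this to the Laplacian picture. The proof of Lemma~\ref{lem:finite-perturbation-L2w} is purely algebraic on the ambient finite-dimensional space and relies only on the bounds $\|D\|\le N$, $\|\Delta_0\|\le N^2$, $\|(A_{w,\lambda}^{(N)})^{-1}\|\le w_-^{-1}$ and $\|M_w\|\le w_+$, all of which hold verbatim with $\mathcal T_{\le N}$ replaced by $\mathcal A_{\le N}$ (only the projector $Q_N$ changes, and every operator-norm estimate is insensitive to this). Thus the same argument yields the compressed resolvent bound $\|R_{w,\lambda}^{(N)}-R_{w,0}^{(N)}\|\le C_N(w)\,\lambda$ on $\mathcal A_{\le N}$ with $C_N(w)=N^4(w_-^{-2}w_++w_-^{-1})$, as in \eqref{eq:finite-resolvent-bound}. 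Feeding this into Theorem~\ref{thm:projector-stability} and Theorem~\ref{thm:GS-stability} (applied to the finite-dimensional subspace $\mathcal A_{\le N}$, the two geometries being realised on $L^2(w)$) produces the projector bound \eqref{eq:OPUC-projector-bound} and the unitary $U_N\in U(N+1)$ satisfying \eqref{eq:OPUC-basis-bound}, with $\widetilde C_N(w)\lesssim C_N(w)$ after tracking constants. The comparison being taken in $\|\cdot\|_{L^2(w)}$ costs nothing: on $\mathcal A_{\le N}$ this norm differs from $\|\cdot\|_{w,\lambda}$ only by the factor $(1+N^2\lambda\,w_-^{-1})^{1/2}=1+O(\lambda)$ (Lemma~\ref{lem:norm-equivalence} together with $\|D\|\le N$). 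If one fixes the Gram--Schmidt ordering with positive leading coefficients, $U_N$ may be taken to be the identity at the price of a slightly larger constant (the gauge-fixing remark following Theorem~\ref{thm:GS-stability}).

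A shorter alternative would bypass the Laplacian entirely: the maps sending an inner product (equivalently its Gram matrix $G$) to its Gram--Schmidt orthonormal basis, and to the $G$-orthogonal projector onto a fixed subspace, are real-analytic, hence Lipschitz on the compact set $\{w_- I\le G\le (w_++N^2)I\}$ (taking $\lambda\le 1$); combined with $\|G^{(\lambda)}-G^{(0)}\|\le N^2\lambda$ this gives \eqref{eq:OPUC-projector-bound}--\eqref{eq:OPUC-basis-bound} directly, with a constant of the same form.

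I expect the only real subtlety to be bookkeeping rather than conceptual, with two points deserving care. The first is verifying that $\widetilde C_N(w)$ is genuinely independent of $\lambda$ on $\lambda\in(0,1]$: this is precisely where the \emph{uniform} lower bound $G^{(\lambda)}\ge w_- I$, rather than any $\lambda$-dependent coercivity, is essential, so that the Lipschitz constant of the Cholesky/polar factorisation does not degenerate as $\lambda\to 0$. The second is pinning down the ambient Hilbert space in \eqref{eq:OPUC-projector-bound}: the projectors $P_{\mathcal A_{\le N}}^{(\lambda)}$ are naturally defined on the dense subspace of analytic polynomials, where both $\langle\cdot,\cdot\rangle_w$ and $\langle\cdot,\cdot\rangle_{w,\lambda}$ are meaningful, and \eqref{eq:OPUC-projector-bound} is to be read as the resulting operator-norm bound after the $L^2(w)$-identification; since all objects involved are supported on $\mathcal A_{\le N}$ this introduces no genuine difficulty, but it should be stated precisely to avoid circularity (otherwise both projectors could be misread as the same $L^2(w)$-orthogonal projection). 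Once these two points are settled, assembling the pieces above gives the claimed estimates.
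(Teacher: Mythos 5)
Your proposal follows essentially the same route as the paper: compress the Laplacians to a finite-dimensional subspace, invoke the quantitative resolvent bound of Lemma~\ref{lem:finite-perturbation-L2w}, and feed this into Theorems~\ref{thm:projector-stability}--\ref{thm:GS-stability}; your ``shorter alternative'' via Lipschitz dependence of Gram--Schmidt on the Gram matrix with controlled condition numbers is likewise the content of the paper's first paragraph. What you add is genuinely useful: the explicit computation $G^{(\lambda)}-G^{(0)}=\lambda\,\mathrm{diag}(0,1,4,\dots,N^2)$ makes the $O(\lambda)$ perturbation concrete, and your observation that the proof of Lemma~\ref{lem:finite-perturbation-L2w} must be \emph{rerun} on $\mathcal A_{\le N}$ (rather than restricted from $\mathcal T_{\le N}$, since the compression of a resolvent to a further subspace is not the resolvent of the compression) corrects a glossed-over step in the paper's phrase ``hence on the subspace $\mathcal A_{\le N}$''; you are right that the key ingredients ($\|D\|\le N$, $\|\Delta_0\|\le N^2$, $A_{w,\lambda}^{(N)}\ge w_-I$, $\|M_w\|\le w_+$) all survive unchanged when $Q_N$ is replaced by the orthogonal projector onto $\mathcal A_{\le N}$, since $D$ and $\Delta_0$ preserve $\mathcal A_{\le N}$ and the coercivity of $M_w$ is inherited by compression. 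Your flag on the ambient-space ambiguity in $P_{\mathcal A_{\le N}}^{(\lambda)}$ is well-taken: the two projectors share the range $\mathcal A_{\le N}$ but differ in their kernels, and the norm comparison is to be read in $\mathcal B(L^2(w))$ after identification, exactly as you say.
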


\begin{proof}
Work in the finite-dimensional space $\mathcal T_{\le N}$ and restrict the operators to the
analytic subspace $\mathcal A_{\le N}\subset \mathcal T_{\le N}$.
Since $\mathcal A_{\le N}$ is finite-dimensional, the orthogonal projectors onto it depend
Lipschitz-continuously on the underlying inner product, with a Lipschitz constant controlled by
a condition number of the Gram matrices. In the present setting, the Gram matrices are uniformly
equivalent because $w_- I\le M_w\le w_+I$, hence the condition numbers are bounded in terms of $w_\pm$.

Lemma~\ref{lem:finite-perturbation-L2w} yields the explicit resolvent bound
\eqref{eq:finite-resolvent-bound} on $\mathcal T_{\le N}$, hence on the subspace
$\mathcal A_{\le N}$, and Theorem~\ref{thm:GS-stability} (applied at degree $N$)
gives \eqref{eq:OPUC-projector-bound}--\eqref{eq:OPUC-basis-bound}.
\end{proof}

\begin{remark}[Interpretation]
The family $\{\varphi_k^{(0)}\}$ are the classical orthonormal polynomials on the unit circle
(OPUC) associated with the measure $w\,d\theta/2\pi$ \cite{SimonOPUC1}.
The family $\{\varphi_k^{(\lambda)}\}$ are the corresponding ``Sobolev-regularized'' polynomials.
Corollary~\ref{cor:OPUC-Sobolev-stability} shows that, at any fixed degree, Sobolev regularization
perturbs the OPUC procedure by $O(\lambda)$ in a quantitative operator-theoretic sense.
\end{remark}

\begin{remark}[A canonical gauge fixing]
If one imposes the usual OPUC normalization (monic polynomials, or positive leading coefficient),
then the unitary ambiguity $U_N$ can be removed and one obtains direct coefficient-wise bounds.
This can be derived by combining \eqref{eq:OPUC-basis-bound} with the triangular structure of
Gram--Schmidt in the monomial basis $(1,z,\dots,z^N)$.
\end{remark}

\begin{remark}[Banded Toeplitz weights]
\label{rem:Toeplitz-banded}
If the density $w$ is a trigonometric polynomial of degree $r$, then the multiplication operator
$M_w$ has a banded Toeplitz matrix in the Fourier basis, while $\Delta_0$ is diagonal.
In this situation, the finite-dimensional operators $A_{w,\lambda}^{(N)}$ and
$\Delta_{w,\lambda}^{(N)}$ inherit a sparse structure, and the constant $C_N(w)$
in Lemma~\ref{lem:finite-perturbation-L2w} can be improved by exploiting the finite bandwidth
of $M_w$.
Since our main purpose is stability at fixed degree, we do not pursue this refinement here.
\end{remark}

\section{Thin annuli revisited through resolvent geometry}
\label{sec:thin-annuli}

\subsection{Polynomial geometries on thin annuli}

Let $\varepsilon>0$ and consider the planar thin annulus
\[
A_\varepsilon=\bigl\{(x,y)\in\mathbb R^2:\;1-\varepsilon < x^2+y^2 < 1+\varepsilon\bigr\}.
\]
We equip $A_\varepsilon$ with the Sobolev (or fractional Sobolev) inner products introduced in
\cite{MagnotThinAnnuli}, and denote by $\mathfrak G_{\varepsilon,s}$ the corresponding polynomial
Hilbert geometry, with derivation $D=\nabla$ and Laplacian
\[
\Delta_{\varepsilon,s}:=D^*D.
\]

All assumptions of Section~\ref{sec:abstract-setting} are satisfied in this setting:
the derivation $\nabla$ is closable, the Laplacian is self-adjoint and nonnegative,
and Gram--Schmidt orthogonalization along the degree filtration produces an orthonormal
polynomial basis.

\subsection{Angular decomposition and block structure}

Due to rotational invariance of both the domain $A_\varepsilon$ and the inner products
considered in \cite{MagnotThinAnnuli}, the Hilbert space completion admits an orthogonal
decomposition into angular Fourier modes:
\[
H_\varepsilon = \bigoplus_{m\in\mathbb Z} H_{\varepsilon}^{(m)}.
\]
Each subspace $H_{\varepsilon}^{(m)}$ is generated by polynomials of the form
$r^{|m|} q(r^2) e^{im\theta}$, where $q$ is a univariate polynomial.
The Laplacian $\Delta_{\varepsilon,s}$ preserves this decomposition and is block diagonal:
\[
\Delta_{\varepsilon,s}
=
\bigoplus_{m\in\mathbb Z} \Delta_{\varepsilon,s}^{(m)}.
\]

For each fixed angular mode $m$, the problem reduces to a one-dimensional polynomial
Hilbert geometry on the interval
\[
I_\varepsilon=[1-\varepsilon,\,1+\varepsilon],
\]
with a Sobolev (or fractional Sobolev) inner product in the variable $t=r^2$.
This reduction is established in detail in \cite{MagnotThinAnnuli}.

\subsection{Banded and almost-banded Laplacians}

A key outcome of \cite{MagnotThinAnnuli} is that, for each fixed $m$:

\begin{itemize}
  \item if $s\in\mathbb N$, the multiplication operator by $t$ has a finite-band matrix
  representation in the orthonormal polynomial basis on $I_\varepsilon$, and the same holds
  for the radial Laplacian $\Delta_{\varepsilon,s}^{(m)}$;
  \item if $s\notin\mathbb N$, the corresponding matrices are almost banded, with algebraic
  decay of coefficients away from the diagonal.
\end{itemize}

In both cases, the resolvent
\[
(1+\Delta_{\varepsilon,s})^{-1}
=
\bigoplus_{m\in\mathbb Z}(1+\Delta_{\varepsilon,s}^{(m)})^{-1}
\]
is well defined as a bounded operator on $H_\varepsilon$.

\subsection{Resolvent convergence as $\varepsilon\to 0$}

The thin annulus regime $\varepsilon\to 0$ corresponds to a geometric collapse of $A_\varepsilon$
onto the unit circle.
At the level of polynomial geometries, this limit is captured by resolvent convergence of
the associated Laplacians.

More precisely, for each fixed angular mode $m$, the rescaled operators
$\Delta_{\varepsilon,s}^{(m)}$ converge, in the norm-resolvent sense on fixed polynomial
subspaces, to a one-dimensional fractional Sobolev Laplacian on a reference interval,
as established in \cite{MagnotThinAnnuli}.
In the language of the present paper, this implies:

\begin{theorem}[Resolvent convergence for thin annuli]
\label{thm:thin-annulus-resolvent}
Fix $s>0$, $m\in\mathbb Z$ and $N\in\mathbb N$.
Then the truncated Laplacians
\[
\Delta_{\varepsilon,s}^{(m,N)}:=P_N^{(m)}\,\Delta_{\varepsilon,s}^{(m)}\,P_N^{(m)}
\]
converge, as $\varepsilon\to 0$, in norm-resolvent sense to the truncated Laplacian
associated with the limiting one-dimensional Sobolev geometry.
\end{theorem}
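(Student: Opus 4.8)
The plan is to reduce the statement, via the angular block decomposition of Proposition~\ref{prop:blockModes} together with an affine rescaling of the radial variable, to the convergence of a fixed finite family of matrices, and then to invoke continuity of the resolvent on a finite-dimensional space.

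\emph{Step 1 (a common reference space).} Fix $m$ and $N$. Every element of $H_\varepsilon^{(m)}$ of degree $\le N$ has the form $r^{|m|}q(r^2)e^{im\theta}$ with $\deg q\le\lfloor (N-|m|)/2\rfloor$, so the finite-dimensional subspace $P_N^{(m)}H_\varepsilon^{(m)}$ is identified, as a vector space, with the $\varepsilon$-independent space $V:=\{q\in\mathcal P(\mathbb R):\deg q\le\lfloor (N-|m|)/2\rfloor\}$ through $q\leftrightarrow r^{|m|}q(r^2)e^{im\theta}$. Composing with the affine change of variable $t=r^2=1+\varepsilon\sigma$ (a linear automorphism of $V$ carrying $I_\varepsilon$ onto $[-1,1]$), I would pull the mode-$m$ inner product $\langle\cdot,\cdot\rangle_{\varepsilon,s}$ and the energy form $(u,v)\mapsto\langle Du,Dv\rangle^{\mathrm{vec}}_{\varepsilon,s}$ back to $V$ and, after the natural $\varepsilon$-normalization, obtain two $\varepsilon$-dependent positive (semi-)definite forms $G_\varepsilon$ and $E_\varepsilon$ on the fixed space $V$. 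Since $D=\nabla$ does not raise degree (Section~\ref{subsec:degree-growth}), $V$ lies in the form domain of $\Delta_{\varepsilon,s}^{(m)}$, and the truncated Laplacian $\Delta_{\varepsilon,s}^{(m,N)}$ is exactly the operator on $(V,G_\varepsilon)$ associated with the quadratic form $E_\varepsilon$; in the fixed monomial basis of $V$ its matrix is $G_\varepsilon^{-1}E_\varepsilon$, and $R_\varepsilon:=(1+\Delta_{\varepsilon,s}^{(m,N)})^{-1}$ has matrix $(G_\varepsilon+E_\varepsilon)^{-1}G_\varepsilon$.

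\emph{Step 2 (asymptotics of the forms).} The input from \cite{MagnotThinAnnuli} is precisely that, under this rescaling and normalization, $G_\varepsilon\to G_0$ and $E_\varepsilon\to E_0$ as $\varepsilon\to 0$, where $(V,G_0,E_0)$ is the degree-$\le N$ truncation of the limiting one-dimensional fractional Sobolev geometry of order $s$ on $[-1,1]$ (the radial component $\partial_r$ of $\nabla$ producing the factor $\varepsilon^{-1}$ absorbed by the normalization, the tangential component $r^{-1}\partial_\theta$ the multiplier $im$; for $s\notin\mathbb N$ one uses the convergence of the fractional seminorm of the mode-$m$ slice to the one-dimensional fractional seminorm, established in loc.\ cit.). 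As $V$ is finite-dimensional, this is convergence in every norm on the space of forms, and the limit $G_0$ is positive definite.

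\emph{Step 3 (from form convergence to norm-resolvent convergence).} Since $G_0>0$ and $E_0\ge 0$, the matrix $G_0+E_0$ is invertible, so continuity of matrix inversion and multiplication gives $(G_\varepsilon+E_\varepsilon)^{-1}G_\varepsilon\to (G_0+E_0)^{-1}G_0=:R_0$ in matrix norm. Identifying $(V,G_\varepsilon)$ with the fixed Hilbert space $(V,G_0)$ via the identity map, which is quasi-unitary because $G_\varepsilon\to G_0>0$ forces $\|\mathrm{id}\|\to 1$ and $\|\mathrm{id}^{-1}\|\to 1$ (the variable--Hilbert--space framework recalled in the corresponding Remark, cf.\ \cite{Post}), and using that the norms $\|\cdot\|_{G_\varepsilon}$ are uniformly equivalent for small $\varepsilon$, this matrix convergence upgrades to $\|R_\varepsilon-R_0\|_{\mathcal B(V,G_0)}\to 0$, which is the asserted norm-resolvent convergence of $\Delta_{\varepsilon,s}^{(m,N)}$ to the truncated Laplacian of the limiting geometry. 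The analytic heart of the argument is Step 2: one must extract from \cite{MagnotThinAnnuli} the exact rescaling and $\varepsilon$-normalization under which the mode-$m$, degree-$\le N$ Sobolev (resp.\ fractional Sobolev) data on the collapsing annulus converge, and identify the limit forms $G_0,E_0$ with those of the claimed one-dimensional model; the non-integer case is the delicate one, since the energy is then nonlocal and its restriction to a thin shell must be shown to concentrate correctly on the circle. Once those asymptotics are granted, the passage to norm-resolvent convergence is the soft finite-dimensional argument of Steps 1 and 3.
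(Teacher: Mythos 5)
Your proposal follows essentially the same route as the paper's terse justification: both rest the analytic weight on the rescaled asymptotics from \cite{MagnotThinAnnuli} and close by the softness of finite-dimensional spectral perturbation. The one substantive difference is that you make explicit the reduction to a fixed finite-dimensional space $V$ (via the affine rescaling $t=1+\varepsilon\sigma$) and the passage from convergence of the Gram and energy forms $(G_\varepsilon,E_\varepsilon)\to(G_0,E_0)$ to norm-resolvent convergence via the matrix identity $R_\varepsilon=(G_\varepsilon+E_\varepsilon)^{-1}G_\varepsilon$, steps the paper leaves implicit in its pointer to Section~\ref{sec:GS-stability}; this is a welcome clarification rather than a different argument.
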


\begin{proof}[Justification]
This follows directly from the explicit asymptotic expansions and banded (or almost-banded)
matrix representations derived in \cite{MagnotThinAnnuli}, combined with the finite-dimensional
resolvent stability results of Section~\ref{sec:GS-stability}.
\end{proof}

\subsection{Consequences for orthogonal polynomials}

As an immediate consequence of Theorem~\ref{thm:thin-annulus-resolvent} and of the general
stability results proved earlier, we obtain:

\begin{corollary}[Stability of orthogonal polynomials on thin annuli]
\label{cor:thin-annulus-polynomials}
For every fixed degree $N$ and angular mode $m$, the orthonormal polynomial basis obtained
by Gram--Schmidt orthogonalization on $A_\varepsilon$ converges, up to a unitary gauge,
to the orthonormal basis associated with the limiting one-dimensional Sobolev geometry
as $\varepsilon\to 0$.
\end{corollary}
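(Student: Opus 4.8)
The plan is to reduce the two-dimensional problem to a one-parameter family of one-dimensional problems indexed by the angular mode, and then to feed the resolvent convergence of Theorem~\ref{thm:thin-annulus-resolvent} into the abstract Gram--Schmidt stability result, Theorem~\ref{thm:GS-stability}. First I would fix the angular mode $m$ and invoke Proposition~\ref{prop:blockModes} (via Lemmas~\ref{lem:angular-decomposition} and \ref{lem:grad-intertwining}) to restrict the whole picture to the block $H_\varepsilon^{(m)}$, on which $\Delta_{\varepsilon,s}$ acts as $\Delta_{\varepsilon,s}^{(m)}$. By the radial reduction established in \cite{MagnotThinAnnuli}, this block is unitarily identified with a one-dimensional (fractional) Sobolev polynomial space in the variable $t=r^2$ on $I_\varepsilon=[1-\varepsilon,1+\varepsilon]$; composing with the affine rescaling $t\mapsto (t-1)/\varepsilon$ onto a fixed reference interval, all these geometries are realized on a common space of univariate polynomials, so the standing hypotheses of Section~\ref{sec:abstract-setting} (common $H$, fixed unitary identification) are met. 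Under this identification, a polynomial on $A_\varepsilon$ of total degree $\le N$ with angular dependence $e^{im\theta}$ corresponds to a univariate polynomial of degree $\le \lfloor (N-|m|)/2\rfloor$ in $t$, so the finite-dimensional subspaces $P_N^{(m)}H_\varepsilon^{(m)}$ are all identified with one fixed finite-dimensional polynomial space $V_{m,N}$.

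Next I would apply Theorem~\ref{thm:thin-annulus-resolvent}: on $V_{m,N}$ the truncated resolvents $(1+\Delta_{\varepsilon,s}^{(m,N)})^{-1}$ converge in operator norm, as $\varepsilon\to 0$, to $(1+\Delta_{0,s}^{(m,N)})^{-1}$, where $\Delta_{0,s}^{(m)}$ is the limiting one-dimensional fractional Sobolev Laplacian. Hence the resolvent distance $d_{\mathrm{res}}$ between the $\varepsilon$-geometry and the limit geometry, measured on $V_{m,N}$, is $o(1)$ as $\varepsilon\to 0$. Feeding this into Theorem~\ref{thm:projector-stability} gives $\|P_N^{(m),\varepsilon}-P_N^{(m),0}\|\to 0$, and Theorem~\ref{thm:GS-stability} then produces, for each $\varepsilon$, a unitary matrix $U_{m,N,\varepsilon}$ with $\max_{0\le k\le N}\bigl\|p_k^{(m),\varepsilon}-\sum_j (U_{m,N,\varepsilon})_{kj}\,p_j^{(m),0}\bigr\|\to 0$, where $(p_k^{(m),\varepsilon})$ and $(p_k^{(m),0})$ are the Gram--Schmidt bases of $V_{m,N}$ for the respective inner products. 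Undoing the rescaling and the radial identification turns these into the orthonormal polynomial families on $A_\varepsilon$ (mode $m$, degree $\le N$) and on the limiting one-dimensional geometry, which is exactly the assertion of the corollary, the ``unitary gauge'' being precisely $U_{m,N,\varepsilon}$.

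The main obstacle is the bookkeeping around variable Hilbert spaces: one must ensure that the $\varepsilon$-dependent identifications $H_\varepsilon^{(m)}\to V_{m,N}$ (the radial unitary composed with the interval rescaling) are compatible with the degree filtration, and that the pushed-forward inner products converge strongly enough that the compressed resolvents $R^{(N)}$ converge in norm and not merely strongly — this is precisely the content of the banded/almost-banded asymptotic expansions of \cite{MagnotThinAnnuli}, which Theorem~\ref{thm:thin-annulus-resolvent} already packages, so I would cite it rather than reprove anything. A secondary, harmless point is that the gauge $U_{m,N,\varepsilon}$ is not canonical; as in the remark on gauge freedom, fixing a Gram--Schmidt ordering (monic in $t$ with positive leading coefficient) removes the ambiguity at the cost of slightly weaker constants, yielding genuine coefficient-wise convergence of the orthonormal polynomials.
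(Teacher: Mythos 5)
Your proposal matches the paper's intended argument exactly: the paper treats this corollary as an immediate consequence of Theorem~\ref{thm:thin-annulus-resolvent} combined with the projector and Gram--Schmidt stability results of Section~\ref{sec:GS-stability}, and you spell out precisely that chain (block decomposition by angular mode, radial reduction and rescaling to a fixed interval, norm-resolvent convergence on the truncated space, then Theorems~\ref{thm:projector-stability} and~\ref{thm:GS-stability} to produce the unitary gauge). Your attention to the variable-Hilbert-space bookkeeping and the degree count $\deg_t q \le \lfloor (N-|m|)/2\rfloor$ fills in details the paper elides but does not deviate from its route.
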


\begin{remark}
This result provides an operator-theoretic interpretation of the asymptotic analysis
performed in \cite{MagnotThinAnnuli}:
the thin annulus limit is a resolvent limit of polynomial geometries, rather than merely
a pointwise or coefficient-wise convergence of orthogonal polynomials.
\end{remark}

\subsection{Interpretation}

The thin annulus example illustrates the scope of the resolvent-based approach developed
in this paper.
A genuinely two-dimensional polynomial geometry collapses onto a one-dimensional one,
while preserving quantitative control of orthogonalization procedures at every fixed degree.
This phenomenon cannot be described purely in terms of measures, but is naturally captured
by the Laplacian and its resolvent.
\section*{Comments and outlook}

The resolvent-based framework developed in this paper provides a robust operator-theoretic
approach to the study of orthogonal polynomial systems beyond the classical measure-based
setting.
By encoding polynomial inner products through their associated Laplacians and comparing
them via norm-resolvent estimates, one obtains quantitative stability results that are
intrinsically geometric and independent of any specific representation by moments.

Several directions naturally emerge from this perspective.
First, while the present work focuses on finite-degree stability, it would be of interest
to investigate regimes in which the degree grows and interacts with the geometry, for
instance in asymptotic or semiclassical limits.
In such settings, refined resolvent estimates could potentially capture transitions between
different polynomial geometries.

Second, the examples treated here suggest that resolvent limits provide a natural notion of
convergence for polynomial Hilbert geometries, even when no limiting measure exists.
This opens the possibility of studying new classes of limiting objects arising from
degenerations of Sobolev or nonlocal inner products, in which orthogonal polynomials remain
well defined at the operator level but escape classical frameworks.

Finally, the operator viewpoint adopted here suggests connections with numerical analysis
and approximation theory, where orthogonalization procedures are often perturbed by
regularization or discretization.
The resolvent distance offers a quantitative tool to assess the stability of such procedures
in a unified manner.

Altogether, these perspectives indicate that viewing orthogonal polynomials through the
lens of operator geometry and resolvent analysis may lead to further insights into their
stability, limits, and structural properties.

\vskip 12pt

\paragraph{\bf Acknowledgements:} J.-P.M thanks the France 2030 framework programme Centre Henri Lebesgue ANR-11-LABX-0020-01 
for creating an attractive mathematical environment.

\vskip 12pt

\paragraph{\bf Author's Note on AI Assistance.}
Portions of the text were developed with the assistance of a generative language model (OpenAI ChatGPT). The AI was used to assist with drafting, editing, and standardizing the bibliography format. All mathematical content, structure, and theoretical constructions were provided, verified, and curated by the author. The author assumes full responsibility for the correctness, originality, and scholarly integrity of the final manuscript.

\end{document}